\documentclass[a4paper,10pt]{amsart}
\usepackage[arrow,matrix]{xy}
\usepackage{amsfonts,amsmath,amssymb,amscd,bbm,amsthm,mathrsfs,dsfont,bm,enumitem}
\usepackage{extarrows}
\usepackage{latexsym}
\usepackage{graphicx}
\usepackage{color}
\usepackage{xy}
\usepackage{leftidx}
\usepackage{pifont}
\setlist[enumerate]{label=$(\arabic*)$}
\usepackage{cite}
\usepackage{makecell,array}

\allowdisplaybreaks
\overfullrule=0pt \headsep=0.5 true cm \topmargin 0pt
\evensidemargin=20pt \oddsidemargin=20pt
\textheight 22.5 true cm \textwidth 15 true cm
\parskip=6pt
\baselineskip=15pt

\newtheorem{theorem}{Theorem}[section]
\newtheorem{lemma}[theorem]{Lemma}
\newtheorem{proposition}[theorem]{Proposition}
\newtheorem{corollary}[theorem]{Corollary}

\theoremstyle{definition}
\newtheorem{definition}[theorem]{Definition}

\newtheorem{remark}[theorem]{Remark}

\numberwithin{equation}{section}
\newcommand{\Z}{\mathbb{Z}}

\newtheorem{example}[theorem]{Example}
\newcommand{\kk}{\mathbbm{k}}
\newcommand{\II}{\mathbbm{I}}
\newcommand{\JJ}{\mathbbm{J}}
 \DeclareMathOperator{\Ker}{Ker}

\DeclareMathOperator{\Hom}{Hom}

\DeclareMathOperator{\uExt}{\underline{Ext}}

\DeclareMathOperator{\uHom}{\underline{Hom}}

\DeclareMathOperator{\gr}{gr}
\DeclareMathOperator{\id}{id}

\DeclareMathOperator{\Gr}{Gr}

\DeclareMathOperator{\umcm}{\underline{mcm}}
\DeclareMathOperator{\gldim}{gldim}

\DeclareMathOperator{\module}{mod}
\def\bt{\begin{theorem}}
\def\et{\end{theorem}}
\def\bl{\begin{lemma}}
\def\el{\end{lemma}}
\def\br{\begin{remark}}
\def\er{\end{remark}}
\def\bc{\begin{corollary}}
\def\ec{\end{corollary}}

\begin{document}

\title{Skew Kn\"orrer's periodicity Theorem
}

\author{Yang Liu}
\address{Liu: Department of Mathematics, Zhejiang Sci-Tech University, Hangzhou 310018, China}
\email{wxly120@gmail.com}

\author{Yuan Shen}
\address{Shen: Department of Mathematics, Zhejiang Sci-Tech University, Hangzhou 310018, China}
\email{yuanshen@zstu.edu.cn}

\author{Xin Wang}
\address{Wang: School of Science, Shandong Jianzhu University, Jinan 250101, China}
\email{wangxin19@sdjzu.edu.cn}

\date{}

\begin{abstract}
In this paper, we introduce a class of twisted matrix algebras of $M_2(E)$ and twisted direct products of $E\times E$ for an algebra $E$.  Let $A$ be a noetherian Koszul Artin-Schelter regular algebra, $z\in A_2$ be a regular central element of $A$ and $B=A_P[y_1,y_2;\sigma]$ be a graded double Ore extension of $A$. We use the Clifford deformation $C_{A^!}(z)$ of Koszul dual $A^!$ to study the noncommutative quadric hypersurface $B/(z+y_1^2+y_2^2)$. We prove that the stable category of graded maximal Cohen-Macaulay
modules over $B/(z+y_1^2+y_2^2)$ is equivalent to certain bounded derived categories, which involve a  twisted matrix algebra of $M_2(C_{A^!}(z))$ or a twisted direct product of $C_{A^!}(z)\times C_{A^!}(z)$ depending on the values of $P$. These results are presented as skew versions of Kn\"orrer's periodicity theorem. Moreover, we show $B/(z+y_1^2+y_2^2)$ may not be a noncommutative graded isolated singularity even if $A/(z)$ is.
\end{abstract}

\subjclass[2010]{16S37, 16E65, 16G50, 16W50, 18E30}

\keywords{noncommutative quadric hypersurface, skew Kn\"orrer’s periodicity theorem, double Ore extension, twisted matrix algebra}

\maketitle

\section*{Introduction}
Throughout the paper, $\kk$ is an algebraically closed field of characteristic $0$. All vector spaces and algebras are over $\kk$. Unless otherwise stated, the tensor product $\otimes$ means $\otimes_{\kk}$.

Kn\"orrer's periodicity theorem is a powerful tool in the study of the Cohen-Macaulay representation theory and also the singularities of (commutative) quadric hypersurfaces (see \cite{K}). It implies that the stable category $\umcm S$ of maximal Cohen-Macaulay modules over a quadric hypersurface $S=\kk[x_1,\cdots,x_n]/(f)$ is equivalent to $\umcm S^{\#\#}$ where $f\in (x_1,\cdots,x_2)^2$ and $S^{\#\#}=\kk[x_1,\cdots,x_n][u,v]/(f+u^2+v^2)$ is the second double branched cover of $S$, which helps us to reduce the computation of the Cohen-Macaulay representation of a quadric hypersurface.

A noetherian Koszul Artin-Schelter regular algebra $A$ is regarded as a noncommutative polynomial and a coordinate ring of a class of noncommutative projective spaces. Let $z\in A_2$ be a nonzero regular central element of $A$. The quotient algebra $A/(z)$ is an analogue of (commutative) quadric hypersurface, usually called a \emph{noncommutative quadric hypersurface}. There have been many studies in noncommutative quadric hypersurfaces for the past few years (for example, \cite{CKMW,HY,HMY,HU, H,HMM,MU1,MU2,Ue,Ue2,SV}). To study graded Cohen-Macaulay modules of a noncommutative quadric hypersurface $A/(z)$, there are two useful tools. One is a finite dimensional algebra $C(A/(z))$ introduced by Smith and Van den Bergh in \cite{SV}, and the other one is a Clifford deformation $C_{A^!}(z)$ of the Koszul dual $A^!$ of $A$ introduced by He and Ye in \cite{HY}. The Clifford deformation $C_{A^!}(z)$ is a strongly $\Z_2$-graded algebra and $C_{A^!}(z)_0\cong C(A/(z))$ (see \cite{HY}), and so $C_{A^!}(z)$ may provide more information about $A/(z)$.

Similar to the commutative case, the noncommutative Kn\"orrer's periodicity theorem, that is, $\umcm A/(z)$ and $\umcm (A/(z))^{\#\#}$ are equivalent, was proved for a noncommutative quadric hypersurface $A/(z)$ in \cite{CKMW,HY,MU2}. He, Ma and Ye proved a generalized version of Kn\"orrer's periodicity theorem for tensor products of two noncommutative quadric hypersurfaces with the help of Clifford deformations (see \cite[Theorem 2.15]{HMY}).

In noncommutative setting, it is reasonable to consider noncommutative versions of double branched covers and twisted tensor products of noncommutative quadric hypersurfaces.  For example, $T=A[u;\sigma_1][v;\sigma_2]/(z+u^2+v^2)$ defined by two-step iterated Ore extensions for some graded automorphisms $\sigma_1$ and $\sigma_2$ seems to make more sense as a class of second noncommutative double branched covers,  and a natural question is how we can describe $\umcm T$ by $\umcm A/(z)$. If $\sigma_1={\sigma_2}_{\mid A}$ and $\sigma_2(u)=u$ , it is a special case of the noncommutative Kn\"orrer's periodicity theorem given by Mori and Ueyama (see \cite[Theorem 1.3]{MU2}), and this case can also reduce to the second double branched cover $(A/(z))^{\#\#}$ by a Zhang-twist (see \cite{Z}). Nevertheless, we are curious abut how to describe $\umcm T$ in general.

On the other hand, Hu, Matsuno and Mori completely classified noncommutative conics up to isomorphism of noncommutative projective  schemes in \cite{HMM}, that is a class of noncommutative quadric hypersurfaces defined by $3$-dimensional noetherian Koszul Artin-Schelter regular algebras. It inspires us to discuss the classification of noncommutative quadric hypersurfaces obtained by $4$-dimensional noetherian Koszul Artin-Schelter regular algebras. To achieve this goal, it is necessary to compute the Cohen-Macaulay representation of such noncommutative quadric hypersurfaces. Note that Zhang and Zhang introduced an approach called double Ore extension to obtain a classification of  $4$-dimensional noetherian Koszul Artin-Schelter regular algebras (see \cite{ZZ1,ZZ2}). Also, double Ore extensions include two-step iterated Ore extensions. Therefore, it is reasonable to study relations between the graded maximal Cohen-Macaulay modules over noncommutative quadric hypersurfaces $A/(z)$ and $A_P[y_1,y_2;\sigma]/(z+y^2_1+y_2^2)$ where $A_P[y_1,y_2;\sigma]$ is a double Ore extension of $A$.

To our end, we introduce twisted algebra structures on a matrix algebra $M_2(E)$ and a direct product $E\times E$ for an algebra $E$. The twisted matrix algebra of $M_2(E)$ (see Theorem \ref{thm: twisted matrix algebra}) comes from a  twisting system $\Theta$ (see Definition \ref{def: twisting system}) consisting of two linear maps
$
\theta^{(i)}:E\to M_2(E)
$ for $i\in\Z_2$ and an invertible $\Z_2$-graded  basis of $M_2(\kk)$, where $M_2(\kk)_0$ consists of diagonal matrices and $M_2(\kk)_1$ consists of anti-diagonal matrices. Denote the new twisted matrix algebra by ${^\Theta M_2(E)}$, and it is a $\Z_2$-graded algebra. This twisted algebra can be viewed as a generalization of Zhang-twists (see \cite{Z}) for graded algebras, but they have different properties. Furthermore, taking the subalgebra ${^\Theta M_2(E)}_0$ of ${^\Theta M_2(E)}$ provides an approach to obtaining a twisted algebra structure on $E\times E$ (see Corollary \ref{cor: twisted of E oplus E}) and denote the new twisted algebra by ${^{\theta_{\times}}(E\times E)}$ where the twisting system  $\theta_\times$ consists of a linear map $\theta$ from $E$ to $ M_2(E)$ and an invertible basis of $\kk\times \kk$.  If $E$ is a $\Z_2$-graded algebra, then ${^\Theta M_2(E)}$ is $\Z_2\times \Z_2$-graded and also $\Z_2$-graded by taking the total degrees, and ${^{\theta_{\times}}(E\times E)}$ is $\Z_2$-graded.

Let $A$ be a noetherian Koszul Artin-Schelter regular algebra, $z\in A_2$ be a nonzero regular central element and $B=A_P[y_1,y_2;\sigma]$ be a $\Z$-graded  double Ore extension of $A$ where $P=\{p_{11},p_{12}\}\subseteq \kk$. The main object we study is the noncommutative quadric hypersurface $B/(z+y_1^2+y_2^2)$. By the deformation method introduced by He-Ye, $\umcm B/(z+y_1^2+y_2^2)$ is totally determined by the Clifford deformation $C_{B^!}(z+y_1^2+y_2^2)$ of $B^!$ (\cite[Theorem 0.2]{HY}). We aim to describe $C_{B^!}(z+y_1^2+y_2^2)$ by the Clifford deformation $C_{A^!}(z)$ of $A^!$. It should be noted that the semi-trivial extension (see Theorem \ref{thm: semi-trivial extension}) is a useful construction for our goal. To make the noncommutative quadric hypersurface $B/(z+y_1^2+y_2^2)$ well-defined, it forces $p_{12}=\pm 1$. We divide it into two cases.
 
If $p_{12}=1$, $p_{11}$ should be $0$. In this case, the Clifford deformation $C_{B^!}(z+y_1^2+y_2^2)$ is a twisted matrix algebra of $M_2(C_{A^!}(z))$, and $\umcm(B/(z+y_1^2+y_2^2))$ can be described by a semi-trivial extension.

\begin{theorem}\label{main thm: p12=1} (Theorem \ref{thm: +1case, clifford def is twisted matrix algebra} and Theorem \ref{thm: +1case equivalence to Lambda})
Let $B=A_{\{1,0\}}[y_1,y_2;\sigma]$ be a $\Z$-graded double Ore extension of a noetherian Koszul Artin-Schelter regular algebra $A$ with $\deg y_1=\deg y_2=1$ and $z\in A_2$ be a nonzero regular central element of $A$. Suppose $B$ is noetherian and  $z+y_1^2+y_2^2$ is a central element of $B$. Then
\begin{enumerate}
\item there exists a twisting system $\Theta$ of $M_2(C_{A^!}(z))$ such that 
$$C_{B^!}(z+y_1^2+y_2^2)\cong {^\Theta M_2(C_{A^!}(z))},$$ 
as $\Z_2$-graded algebras;

\item there exists a semi-trivial extension $\Lambda=S\ltimes_{\psi}{M(1)}$ of a $\Z_2$-graded subalgebra $S$ of $C_{A^!}(z)$ and a $\Z_2$-graded $S$-bimodule $M$ (a subspace of $C_{A^!}(z)$) through a map $\psi: M(1)\otimes_S M(1)\to S$ such that
	$$\hspace{11mm}\umcm(B/(z+y_1^2+y_2^2))\cong D^b(\gr_{\Z_{2}}{^\Theta M_2(C_{A^!}(z))})\cong D^b(\module {^\Theta M_2(C_{A^!}(z))}_0)\cong D^b(\gr_{\Z_2} \Lambda),$$
as triangulated categories.
\end{enumerate}
\end{theorem}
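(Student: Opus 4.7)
For part (1), the plan is to first describe $B^!$ explicitly. Since $B = A_{\{1,0\}}[y_1,y_2;\sigma]$ is a graded double Ore extension with $p_{12}=1$ and $p_{11}=0$, the defining relations over $A$ are $y_2 y_1 = y_1 y_2$ together with $y_i a = \sum_j \sigma_{ij}(a)\, y_j$ for $a \in A_1$, where $\sigma = (\sigma_{ij})$ is the matrix of linear maps coming from the extension. Passing to Koszul duals, $B^!$ is generated by $A^! \oplus \kk y_1^* \oplus \kk y_2^*$ modulo the orthogonal complement of these relations, giving symmetric quadratic relations in the $y_i^*$ and $\sigma^!$-twisted interaction relations between $y_i^*$ and $A^!_1$. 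The Clifford deformation $C_{B^!}(z+y_1^2+y_2^2)$ then modifies the quadratic part of $B^!$ by the class of $z+y_1^2+y_2^2$, which in concrete terms imposes $(y_i^*)^2 = 1$ and $y_1^* y_2^* + y_2^* y_1^* = 0$ on top of the Clifford relations inherited from $C_{A^!}(z)$; centrality of $z+y_1^2+y_2^2$ in $B$ provides the compatibility needed for this deformation to exist.

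Next, I construct the isomorphism $C_{B^!}(z+y_1^2+y_2^2) \cong {}^\Theta M_2(C_{A^!}(z))$. Every element of the left-hand side can be written uniquely in the form $a_0 + y_1^* a_1 + y_2^* a_2 + y_1^* y_2^* a_3$ with $a_i \in C_{A^!}(z)$, and the subspaces with an even, respectively odd, number of $y^*$-factors form a $\Z_2$-grading. I match this basis to the standard matrix units $\{E_{11}, E_{12}, E_{21}, E_{22}\}$ in $M_2(C_{A^!}(z))$ so that diagonal entries correspond to the even part and anti-diagonal entries to the odd part, respecting the $\Z_2$-grading on $M_2(\kk)$. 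The twisting system $\Theta = (\theta^{(0)}, \theta^{(1)})$ records how the $\sigma^!$-induced automorphisms act when moving $a \in C_{A^!}(z)$ past $y_1^*$ or $y_2^*$, and the axioms from Definition \ref{def: twisting system} translate precisely into the double Ore extension compatibility conditions on $\sigma$ (with the choice $p_{12}=1, p_{11}=0$ ensuring the right symmetric anticommutation). The algebra isomorphism is then verified by matching relations on generators, which is straightforward but requires careful bookkeeping.

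For part (2), I apply the He-Ye theorem \cite[Theorem 0.2]{HY} together with part (1) to obtain
\[
\umcm(B/(z+y_1^2+y_2^2)) \simeq D^b(\gr_{\Z_2} C_{B^!}(z+y_1^2+y_2^2)) \simeq D^b(\gr_{\Z_2} {}^\Theta M_2(C_{A^!}(z))).
\]
The equivalence $D^b(\gr_{\Z_2} {}^\Theta M_2(C_{A^!}(z))) \simeq D^b(\module {}^\Theta M_2(C_{A^!}(z))_0)$ follows once I verify that ${}^\Theta M_2(C_{A^!}(z))$ is strongly $\Z_2$-graded; this reduces to the observation that the anti-diagonal basis of $M_2(\kk)$ consists of units and so the degree-$1$ piece generates over the degree-$0$ piece. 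Finally, I identify ${}^\Theta M_2(C_{A^!}(z))_0$ with the semi-trivial extension $\Lambda = S \ltimes_\psi M(1)$, where $S \subseteq C_{A^!}(z)$ is the $\sigma^!$-invariant subalgebra appearing on both diagonal entries, $M$ is the $S$-bimodule arising from the off-diagonal-entry data (sitting as a $\Z_2$-graded subspace of $C_{A^!}(z)$ via the twist), and $\psi: M(1)\otimes_S M(1) \to S$ is the pairing induced by the matrix product of anti-diagonal entries. The hardest step of the proof will be precisely this last identification: extracting the subalgebra $S$ and bimodule $M$ inside $C_{A^!}(z)$ from the twisted matrix structure, and showing that the multiplication of ${}^\Theta M_2(C_{A^!}(z))_0$ is exactly the semi-trivial extension multiplication determined by $\psi$, as this is where the combinatorics of $\Theta$ and the $\sigma^!$-action interact most intricately.
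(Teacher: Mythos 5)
Your plan for part (1) captures the right idea: present $C_{B^!}(z+y_1^2+y_2^2)$ by generators and relations (the paper does this in Lemma \ref{lem: rep for Clifford deformation of double ore extensions}) and match them against the multiplication of a twisted matrix algebra. However, matching the free $C_{A^!}(z)$-basis $\{1, y_1^*, y_2^*, y_1^*y_2^*\}$ to the standard matrix units $E_{ij}$ is incompatible with Definition \ref{def: twisting system}, which requires the $\Z_2$-graded basis $\II$ of $M_2(\kk)$ to consist of \emph{invertible} matrices; $E_{11}$ and $E_{22}$ are singular. The paper deliberately uses the Pauli-type basis in \eqref{eq: definition of basis} and sends $y_i^* \mapsto \II^{(1)}_i$, $1 \mapsto I_2 = \II^{(0)}_1$. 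Relatedly, the $\Z_2$-grading on ${^\Theta M_2(C_{A^!}(z))}$ needed for part (1) is the total degree of Remark \ref{rem: Z_2-grading of tiwsted matrix algebra}(1), not the parity of the number of $y^*$-factors; under the total-degree convention, $y_1^*v^*$ has degree $0$ in $C_{B^!}$ and lands in $M_2(\kk)_1\cdot C_{A^!}(z)_1$, total degree $0$, consistently.

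The serious gap is in part (2). You propose to identify ${^\Theta M_2(C_{A^!}(z))}_0$ directly with the semi-trivial extension $\Lambda = S\ltimes_\psi M(1)$ as algebras. This is false. What the paper actually shows is that $\Lambda$ is isomorphic to the \emph{corner algebra} $e\big({^\Theta M_2(C_{A^!}(z))}\big)e$ for the homogeneous full idempotent $e = 2^{-1}(\II^{(0)}_1 + \sqrt{-1}\,\II^{(0)}_2)$, i.e.\ the matrix $E_{11}$ (Proposition \ref{prop: e is a full idempotent}). That corner has $\kk$-dimension $\dim S + \dim M$, while ${^\Theta M_2}_0$ has dimension $2\dim C_{A^!}(z)$; in Example \ref{ex: +1case two setp Ore extensions}(1) (with $\sigma_1 = \sigma_2 = \sigma$, $\sigma^2 = \id$) one gets $S = C_{A^!}(z)$ and $M = 0$, so $\Lambda = C_{A^!}(z)$, which is strictly smaller than ${^\Theta M_2}_0$, and no algebra isomorphism exists. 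The chain of four equivalences in the theorem is indirect: the passage from ${^\Theta M_2}$ to $\Lambda$ is the graded Morita-type equivalence $\gr_{\Z_2}{^\Theta M_2(C_{A^!}(z))} \simeq \gr_{\Z_2}\, e\big({^\Theta M_2(C_{A^!}(z))}\big)e$ provided by the fullness of $e$, followed by the $\Z_2$-graded algebra isomorphism $e\big({^\Theta M_2(C_{A^!}(z))}\big)e \cong \Lambda$ computed from \eqref{eq: mul of twisted matrix}. Your plan misses the idempotent step entirely. Moreover, the paper notes just before Theorem \ref{thm: +1case equivalence to Lambda} that $\Lambda$ is in general \emph{not} strongly $\Z_2$-graded, so even if some ungraded identification held, one could not pass between $D^b(\module\Lambda)$ and the target $D^b(\gr_{\Z_2}\Lambda)$ by a Dade-type argument; the corner-algebra route is essential.
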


If $p_{12}=-1$, we can reduce $p_{11}$ to be $0$ in case $p_{11}\neq\pm2\sqrt{-1}$. Then one obtains a twisted direct product $\Gamma={^{\theta_\times}(C_{A^!}(z)\times C_{A^!}(z))}$ for some twisting system $\theta_\times$ and the Clifford deformation $C_{B^!}(z+y_1^2+y_2^2)$ is a semi-trivial extension constructed by $\Gamma$. In particular, the $0$-th component of $C_{B^!}(z+y_1^2+y_2^2)$ is exactly a Zhang-twisted algebra (see \cite{Z}) of $\Gamma$.

\begin{theorem}\label{main thm: p12=-1}(Theorem \ref{thm: -1case equivalence ot semi-trivial extension of Gamma})
Let $B=A_{\{-1,0\}}[y_1,y_2;\sigma]$ be a $\Z$-graded double Ore extension of a noetherian Koszul Artin-Schelter regular algebra $A$ with $\deg y_1=\deg y_2=1$ and $z\in A_2$ be a nonzero regular central element of $A$. Suppose $B$ is noetherian and  $z+y_1^2+y_2^2$ is a central element of $B$. Then 
 \begin{enumerate}
     \item there exist a twisted direct product $\Gamma$ of  $C_{A^!}(z)\times C_{A^!}(z)$ and a semi-trivial extension $\Gamma\ltimes_{\psi} ({_\mu\Gamma(1)})$ of $\Z_2$-graded algebra $\Gamma$ and $\Z_2$-graded $\Gamma$-bimodule ${_\mu\Gamma(1)}$ through some map $\psi$ where $\mu$ is a $\Z_2$-graded automorphism of $\Gamma$ such that 
     $$C_{B^!}(z+y_1^2+y_2^2)\cong\Gamma\ltimes_{\psi} ({_\mu\Gamma(1)})$$ 
     as $\Z_2$-graded algebras.
     \item  there are equivalences of triangulated categories
	$$\umcm(B/(z+y_1^2+y_2^2))\cong D^b\left(\gr_{\Z_{2}}
 \Gamma\ltimes_{\psi} ({_\mu\Gamma(1)})\right)\cong D^b\left(\module 
 {^\nu \Gamma}\right),$$
where $^\nu \Gamma$ is a Zhang-twisted algebra of $\Gamma$ for the left Zhang-twisting system $\nu=\{\nu_0=\id,\nu_1=\mu\}$ of $\Z_2$-graded algebra $\Gamma$.
 \end{enumerate}
\end{theorem}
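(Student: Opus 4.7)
The plan is to parallel the proof of Theorem \ref{main thm: p12=1}, replacing the matrix structure by a direct-product structure dictated by the anti-commutation $p_{12}=-1$. The main input is again He-Ye's deformation theorem \cite[Theorem 0.2]{HY}, which reduces $\umcm(B/(z+y_1^2+y_2^2))$ to $D^b(\gr_{\Z_2} C_{B^!}(w))$ for $w:=z+y_1^2+y_2^2$. The second equivalence in part (2) will then follow by a Zhang-twist reduction applied to the semi-trivial extension produced in part (1).

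First I would reduce to the case $p_{11}=0$. Since $p_{11}\neq\pm 2\sqrt{-1}$, a nonsingular linear change of variables in $\kk y_1+\kk y_2$ converts $A_{\{-1,p_{11}\}}[y_1,y_2;\sigma]$ into a double Ore extension with parameters $\{-1,0\}$, preserving the central quadratic form up to a nonzero scalar. I then compute the Koszul dual $B^!$ explicitly: its degree-one part is $A^!_1 \oplus \kk y_1^* \oplus \kk y_2^*$ and the relation $y_2 y_1 = -y_1 y_2$ in $B$ dualizes to $\eta_1\eta_2+\eta_2\eta_1=0$ in the Clifford deformation $C_{B^!}(w)$, where $\eta_i$ denotes the image of $y_i^*$.

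The heart of the proof is the identification of $C_{B^!}(w)$ with a semi-trivial extension of the asserted form. The even part $C_{B^!}(w)_0$ is generated by $C_{A^!}(z)$ together with $c := \eta_1\eta_2$; a direct computation from the Clifford relations shows that $c^2$ is a nonzero scalar and that conjugation by $c$ on $C_{A^!}(z)$ realises the graded automorphism induced by $\sigma$. The pair of central idempotents $\tfrac{1}{2}(1\pm c/\sqrt{c^2})$ splits $C_{B^!}(w)_0$ as a vector space into two copies of $C_{A^!}(z)$, and the $\sigma$-twist in the multiplication between the two copies is recorded by a twisting system $\theta_\times$, producing the twisted direct product $\Gamma = {^{\theta_\times}(C_{A^!}(z)\times C_{A^!}(z))}$ of Corollary \ref{cor: twisted of E oplus E}. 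The odd part $C_{B^!}(w)_1$ is free of rank one over $\Gamma$ on the generator $\eta_1$; conjugation by $\eta_1$ interchanges the two idempotent components of $\Gamma$ while twisting by $\sigma$, so the $\Gamma$-bimodule structure is exactly ${_\mu\Gamma(1)}$ for this $\Z_2$-graded automorphism $\mu\in\Aut(\Gamma)$. The multiplication $\eta_1\cdot\eta_1\in\Gamma$ supplies the pairing $\psi:{_\mu\Gamma(1)}\otimes_\Gamma {_\mu\Gamma(1)}\to \Gamma$, and verifying associativity and balancedness against the hypotheses of Theorem \ref{thm: semi-trivial extension} gives the isomorphism $C_{B^!}(w) \cong \Gamma\ltimes_\psi {_\mu\Gamma(1)}$, establishing (1).

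Part (2) has two equivalences. The first follows from (1) combined with \cite[Theorem 0.2]{HY} applied to $B/w$. For the second, $\Gamma\ltimes_\psi {_\mu\Gamma(1)}$ is strongly $\Z_2$-graded (the generator of ${_\mu\Gamma(1)}$ is invertible in $C_{B^!}(w)$), so the degree-$0$ functor gives $\gr_{\Z_2}(\Gamma\ltimes_\psi {_\mu\Gamma(1))}\simeq\module(R)$ for a suitable ring $R$; the right action of $\Gamma$ on ${_\mu\Gamma(1)}$ twisted by $\mu$ displaces $R$ from $\Gamma$ by precisely the left Zhang-twisting system $\nu=\{\id,\mu\}$, whence $R\cong {^\nu\Gamma}$. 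The main obstacle I anticipate is the explicit combinatorial identification of $\theta_\times$ and $\mu$ from the Clifford relations: the sign conventions coming from $p_{12}=-1$, the interplay between the Ore automorphisms $\sigma$ and the Clifford deformation, the verification that $c^2$ is central and nonzero, and the check that $\psi$ is balanced are all fiddly but essentially formal; once these explicit formulas are in hand, the semi-trivial-extension and Zhang-twist identifications follow directly.
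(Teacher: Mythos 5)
Your plan parallels the paper's: exhibit $C_{B^!}(w)$ (with $w=z+y_1^2+y_2^2$) as a semi-trivial extension of the twisted direct product $\Gamma={^{\theta_\times}(C_{A^!}(z)\times C_{A^!}(z))}$ by ${_\mu\Gamma(1)}$, then apply He--Ye's theorem for the first equivalence in (2) and a Zhang-twist identification of the degree-$0$ part for the second. That is indeed the route the paper takes.

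However, you have the key sign backward. With $p_{12}=-1$ one has $y_2y_1+y_1y_2=0$ in $B$, so $R_J^\perp$ contains $y_1^*y_2^*-y_2^*y_1^*$; hence in $C_{B^!}(w)$ the classes $\eta_1=y_1^*,\eta_2=y_2^*$ \emph{commute} and $\eta_i^2=1$ — see the formula for $R_J^\perp$ in Lemma \ref{lem: properties for B!} and the explicit list of relations in the proof of Theorem \ref{thm: -1case equivalence ot semi-trivial extension of Gamma}. Consequently $c=\eta_1\eta_2$ satisfies $c^2=1$ and $\eta_1 c\eta_1=c$: conjugation by $\eta_1$ \emph{fixes} the idempotent directions $\tfrac{1}{2}(1\pm c)$ rather than interchanging them. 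The ``swap'' picture you describe is exactly what happens for $p_{12}=1$ (there the dual generators anti-commute, $c^2=-1$, and conjugation by $\eta_1$ negates $c$), and it is precisely what yields the twisted matrix algebra of Theorem \ref{thm: +1case, clifford def is twisted matrix algebra}; importing it to $p_{12}=-1$ misidentifies $\mu$. The paper's $\mu$ in Lemma \ref{lem: definition of mu}, built from $\sigma^!_{11}\xi_{-1}$ and $\sigma^!_{21}\xi_{-1}$, preserves rather than permutes the two $C_{A^!}(z)$-factors. Two further inaccuracies worth flagging: the idempotents $\tfrac{1}{2}(1\pm c)$ are not central in $\Gamma$ (the multiplication between the two copies is twisted through $\theta_\times$), so they split $\Gamma$ only as a vector space; and the decomposition of $C_{B^!}(w)$ into $\Gamma$ and ${_\mu\Gamma(1)}$ is \emph{not} the even/odd Clifford $\Z_2$-grading — it is the $E$-vs-$M$ decomposition of the semi-trivial extension, the Clifford grading being recovered by forgetting the $E$-vs-$M$ degree as in Remark \ref{rem: semi-trivial extension}(3). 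Once the commutation and the correct form of $\mu$ are fixed, the remainder of your outline (strong $\Z_2$-grading from the invertibility of $\eta_1$, and $(\Gamma\ltimes_\psi{_\mu\Gamma(1)})_0\cong{^\nu\Gamma}$) matches the paper's argument.
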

In particular, Theorem \ref{main thm: p12=1} reduces to the noncommutative Kn\"orrer's periodicity theorem if $\sigma$ is a diagonal map. Theorem \ref{main thm: p12=1} and Theorem \ref{main thm: p12=-1} can be viewed as skew versions of Kn\"orrer's periodicity theorem.

For the exceptional case $p_{12}=-1$ and $p_{11}=\pm2\sqrt{-1}$, we have any noncommutative quadric hypersurface $A_{\{-1,p_{11}\}}[y_1,y_2;\sigma]/(z+y_1^2+y_2^2)$ is not a noncommutative  graded isolated singularity in the sense of \cite{Ue1} (see Proposition \ref{prop: -1case p11=2genhao-1 not isolated singularity}), or whose noncommutative projective scheme is not smooth. Hence, we are also interested in whether $A_{\{\pm 1,0\}}[y_1,y_2;\sigma]/(z+y_1^2+y_2^2)$ is a noncommutative graded isolated singularity in case $A/(z)$ is. We give a negative answer to this question (see Proposition \ref{prop: double ore extension not preserve isolated singulariy}). It should be emphasized that this result is a totally different phenomena to the usual second double branched covers and tensor products of noncommutative quadric hypersurfaces, which preserve the property of noncommutative graded isolated singularity (see \cite[Theorem 0.4]{HY} and \cite[Theorem 2.5]{HMY}).

This paper is organized as follows. In Section 1, we recall some definitions and results, including noncommutative quadric hypersurfaces, Clifford deformations and semi-trivial extensions. In Section 2, we introduce a class of twisted matrix algebras and a class of twisted direct products. In Section 3, we recall the definition of double Ore extensions and show some basic facts about  the Clifford deformations for noncommutative quadric hypersurfaces obtaining by double Ore extensions. We devote Section 4 and Section 5 to proving Theorem \ref{main thm: p12=1} and Theorem \ref{main thm: p12=-1} respectively.

\section{Preliminaries}\label{Section preliminaries}

Let $C=\oplus_{g\in G}C_g$ be a $G$-graded algebra where $G$ is an abelian group. Write  $\Gr_G C$ for the category of $G$-graded right $C$-modules with morphisms of $C$-module homomorphisms preserving grading and $\gr_G C$ for the full subcategory of $\Gr_G C$ consisting of finitely generated $G$-graded right $C$-modules. Let $M$ be a $G$-graded right $C$-module, then $M(h)$ stands for the shifting $G$-graded right $C$-module where $M(h)_g=M_{g+h}$ for any $g\in G$ and  some $h\in G$. For a $G$-graded automorphism $\mu$ of $C$, the twisted $G$-graded right $C$-module $M_\mu$ is the abelian group $M$ with the right $C$-action  $m\cdot c=m\mu(c)$ for any $m\in M,c\in C.$ Similarly, we can define shifting  and twisted $G$-graded left $C$-modules (or, $C$-bimodules).

For $M,M'\in \Gr_G C$, write  $$\uHom_C(M,M')=\oplus_{g\in G}\Hom_{\Gr_G C}(M,M'(g))$$ and $\uExt^i_C$ for the $i$-th derived functor of $\uHom_{C}$. In particular, $\kk$ is always a $G$-graded algebra concentrated in degree 0, and denote the $G$-graded linear dual functor $\uHom_\kk(-,\kk)$ by $(-)^*.$ For any nonzero $k\in\kk$, define a $G$-graded algebra automorphism $\xi_{k}$ of $C$ by 
$\xi_{k}(c)=k^{\deg c}c$
for any homogeneous element $c$ of $C$. 

A set of $G$-graded linear automorphisms $\nu=\{\nu_g\mid g\in G\}$ of $C$ is called a \emph{left Zhang-twisting system} of $C$, if $ \nu_{l}(\nu_h(x)y)=\nu_{hl}(x)\nu_l(y)$
for all $g,h,l\in G$ and $x\in C_g,y\in C_h$. If $\nu=\{\nu_g\mid g\in G\}$ is left Zhang-twisting system of $C$, the graded vector space $C=\oplus_{g\in G}C_g$ with the multiplication $
x\ast y=\nu_h(x)y,
$
for all $x\in C_g,y\in C_h$, is a $G$-graded algebra, called a \emph{Zhang-twisted algebra} of $A$ by $\nu$, denoted by ${^\nu A}$ (see \cite[Section 4]{Z}).

\begin{definition}\label{def: inverse and t-inverse}
Let $C$ be a $G$-graded algebra, and $\sigma=(\sigma_{ij})$ and $\varphi=(\varphi_{ij})$ be two algebra homomorphisms from $C$ to $M_2(C)$ where $\sigma_{ij}$ and $\varphi_{ij}$ are $G$-graded linear transformations of $C$ for any $i,j=1,2$. If
	$$
	\sum_{k=1}^2 \sigma_{ki}\varphi_{kj}=\delta_{ij}\id_C\quad \text{and}\quad 	\sum_{k=1}^2 \varphi_{jk}\sigma_{ik}=\delta_{ij}\id_C,
	$$
where $\delta_{ij}$ is the Kronecker symbol for any $i,j=1,2$, we say $\sigma$ is \emph{invertible}, $\varphi$ is \emph{$t$-invertible}, $\varphi$ is an \emph{inverse} of $\sigma$ and $\sigma$ is a \emph{$t$-inverse} of $\varphi$.

In this case, it is not hard to obtain that the inverse of $\sigma$ and the $t$-inverse of $\varphi$ are both unique.
\end{definition}

\begin{remark}
Definition \ref{def: inverse and t-inverse} is a graded version of  \cite[Definition 1.8]{ZZ1}. Note that the invertible form and $t$-invertible form are not the same. The letter ``$t$'' of $t$-invertible  stands for the word ``transpose'' since the transpose of $\varphi$ has the the same invertible property  of $\sigma$. 
\end{remark}

A non-negative $\Z$-graded algebra $C$ with  $C_0=\kk$ and 
$\dim C_i<\infty$ for any $i\geq 1$ is called a \emph{locally finite connected $\Z$-graded algebra}. A locally finite connected $\Z$-graded algebra $A$ is called a \emph{Koszul algebra} (see \cite{Pr}), if the trivial module $\kk_A$ has a free resolution
$$
\cdots\to P_n\to\cdots \to P_1\to P_0\to \kk\to 0,
$$
where $P_i$ is a $\Z$-graded free right $A$-module generated in degree $i$ for any $i\geq 0$. Then $A\cong T(V)/(R)$ for some finite dimensional vector space $V$ with degree $1$ and some vector space $R\subseteq V\otimes V$. The \emph{Koszul dual} of $A$ is the
quadratic algebra $A^! = T(V^*)/(R^\perp)$, where $V^*$ is the dual vector space of $V$ and $R^\perp\subseteq V^*\otimes V^*$ 
is the orthogonal complement of $R$. The Koszul dual $A^!$ is also Koszul.

\begin{definition}
    A locally finite connected $\Z$-graded algebra $D$ is called an \emph{Artin-Schelter Gorenstein algebra} of injective dimenison $d$ if 
    \begin{enumerate}
        \item $\mathrm{injdim}\, _DD=\mathrm{injdim}\, D_D=d<\infty,$
        \item   $\uExt^i_D(\kk_D,D_D)=\uExt^i_D(_D\kk,_DD)=0$ if $i\neq d$ and $\uExt^d_D(\kk_D,D_D), \uExt^d_D(_D\kk,_DD)$ are both $1$-dimensional.
    \end{enumerate}
In addition, $D$ has finite global dimension, then $D$ is called an \emph{Artin-Schelter regular algebra} of dimension $d$.
\end{definition}

Let $A=T(V)/(R)$ be a noetherian Koszul Artin-Schelter regular algebra of dimension $d$ and $z\in A_2$ be a nonzero regular central element of $A$.
The quotient algebra $S=A/(z)$ is a noncommutative quadric
hypersurface, and it is a noetherian Koszul Artin-Schelter Gorenstein algebra of injective dimension $d-1$  by \cite[Lemma 5.1(a)]{SV}.

The noncommutative projective scheme $\mathrm{qgr}\, S=\gr_{\Z} S/\mathrm{tors}_{\Z} \,S$ where $\mathrm{tors}_{\Z} \,S$ is the full subcategory of $\gr_{\Z}  S$ consisting of finite dimensional $\Z$-graded right $S$-modules. We say $S$ is a \emph{noncommutative graded isolated singularity} if
$\mathrm{qgr}\,S$ has finite global dimension (see \cite{Ue1}).

A finitely generated $\Z$-graded right $S$-module $N$ is called a \emph{maximal Cohen-Macaulay module} if the local cohomology
$$
\mathrm{H}_{\mathfrak{m}}^i(N)=\lim_{n\to \infty}\uExt^i_S(S/S_{\geq n},N)=0, \quad \text{if }i\neq d-1,
$$
where $\mathfrak{m}=S_{\geq 1}$.
Denote the full 
subcategory of $\gr_{\Z} S$ consisting of all the  maximal Cohen-Macaulay modules by $\mathrm{mcm}\,S$ and the stable category of  $\mathrm{mcm}\,S$ by $\umcm S$ which is a triangulated category.

In this paper, we use the Clifford deformation introduced by He-Ye in \cite{HY} to discuss $\umcm(S)$. Recall some results in \cite{HY} here. By \cite[Proposition 5.1]{Sm}, $A^!$ is a Koszul Frobenius algebra. Choose $\hat{z}\in V^{\otimes 2}$ as a lift of $z$.  There is a Clifford map
$$
\begin{array}{cclc}
\theta_z:&R^\perp&\to &k\\
~& f&\mapsto & f(\hat{z}),
\end{array}
$$
satisfying $(\theta_z\otimes 1-1\otimes\theta_z)(V^*\otimes R^\perp\cap R^\perp\otimes V^*)=0$. Note that the map $\theta_z$ is independent of the choice of $\hat{z}$. 
Then we have an algebra 
$$C_{A^!}(z)=T(V^*)/(f-\theta_z(f):f\in R^\perp),$$
which is called a \emph{Clifford deformation} of $A^!$. The $\Z$-graded algebra $T(V^*)$ is also a $\Z_2$-graded algebra if we set the subspace $\oplus_{n\geq 0}(V^*)^{\otimes 2n}$ with degree $0$ and $\oplus_{n\geq 0}(V^*)^{\otimes 2n+1}$  with degree $1$. Then $C_{A^!}(z)$ is also a $\Z_2$-graded algebra induced by the $\Z_2$-grading of $T(V^*)$.

\begin{theorem}\cite[Theorem 0.1 and Theorem 0.2]{HY}\label{thm: properties of clifford def}
Retain the notations above. Then
\begin{enumerate}
	\item $C_{A^!}(z)$ is a strongly $\Z_2$-graded Frobenius algebra;
        \item $S$ is a noncommutative graded isolated singularity if and only if $C_{A^!}(z)$ is a $\Z_2$-graded semisimple algebra;
	\item there are equivalences of triangulated categories
	$$
	\umcm S\cong D^b(\gr_{\Z_2}C_{A^!}(z))\cong D^b(\module C_{A^!}(z)_0).
	$$
\end{enumerate}
\end{theorem}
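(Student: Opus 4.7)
The plan is to establish the three assertions in sequence: (1) via PBW-deformation theory applied to the Koszul Frobenius algebra $A^!$; (3) by a BGG-type Koszul-duality construction adapted to noncommutative hypersurfaces, together with Dade's theorem for strongly graded algebras; and (2) as a formal consequence of (1) and (3). The unifying idea is that $C_{A^!}(z)$ is a quadratic deformation of $A^!$ encoding the regular central element $z$, and that this deformation faithfully controls the graded Cohen--Macaulay representation theory of $S=A/(z)$.

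For (1), I would first verify that $C_{A^!}(z)$ is a genuine PBW deformation of $A^!$, so that $\gr C_{A^!}(z)\cong A^!$ with respect to the length filtration. The compatibility condition $(\theta_z\otimes 1-1\otimes\theta_z)(V^*\otimes R^\perp \cap R^\perp\otimes V^*)=0$ built into the definition of the Clifford map is exactly the Braverman--Gaitsgory-type Jacobi condition that guarantees this. Since $C_{A^!}(z)$ and $A^!$ then share a Hilbert series and $A^!$ is Frobenius, transporting the Frobenius pairing through the filtration yields a nondegenerate pairing on $C_{A^!}(z)$, making it Frobenius. For strong $\Z_2$-grading, the key point is that regularity of $z$ forces the Clifford map to be nondegenerate in a sufficiently strong sense; concretely one shows $1\in C_{A^!}(z)_1\cdot C_{A^!}(z)_1$, which combined with the obvious inclusions yields the strong grading.

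For (3), the main ingredient is a noncommutative BGG-type correspondence for hypersurfaces. I would view $S$ as a codimension-one complete intersection whose stable MCM category exhibits a $\Z_2$-periodicity analogous to Eisenbud's matrix-factorization picture. The standard Koszul-duality functor between graded $A$-modules and graded $A^!$-modules should be refined by imposing the Clifford relation $f=\theta_z(f)$, so that eventually $\Z_2$-periodic complete resolutions of graded MCM $S$-modules correspond to $\Z_2$-graded modules over $C_{A^!}(z)$; this produces the equivalence $\umcm S\cong D^b(\gr_{\Z_2} C_{A^!}(z))$. The second equivalence then follows from Dade's theorem applied to the strongly $\Z_2$-graded algebra $C_{A^!}(z)$ of (1), which gives an equivalence of abelian categories $\gr_{\Z_2} C_{A^!}(z) \cong \module C_{A^!}(z)_0$, and hence of their bounded derived categories. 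Finally, (2) is deduced by tracing the equivalence of (3): $\mathrm{qgr}\,S$ has finite global dimension precisely when, under the derived equivalence, $C_{A^!}(z)_0$ is semisimple, and by strong grading this is equivalent to $C_{A^!}(z)$ being $\Z_2$-graded semisimple.

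The hardest step will be the BGG equivalence in (3): precisely constructing the functor, correctly tracking how the Clifford relation $f=\theta_z(f)$ emerges from Koszul duality in the presence of the central element $z$, and verifying full faithfulness and essential surjectivity all require careful bookkeeping of periodicity and curvature data. The PBW verification for (1) is a nontrivial but fairly standard computation, and once (3) is available, (2) is essentially formal.
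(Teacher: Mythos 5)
This theorem is stated in the paper as a citation of \cite[Theorems 0.1 and 0.2]{HY}; the paper itself offers no proof, so there is no in-paper argument to compare against. Judged on its own merits, your outline is a reasonable high-level reconstruction of the strategy in \cite{HY}, but two points deserve correction.

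First, on (3): the actual route in the literature does not go through a ``BGG-type'' functor built from scratch. The equivalence $\umcm S\cong D^b(\module C_{A^!}(z)_0)$ rests on the Smith--Van den Bergh equivalence $\umcm S\cong D^b(\module C(S))$ from \cite{SV} together with the identification $C(S)\cong C_{A^!}(z)_0$, which is the main new ingredient of \cite{HY}; the remaining equivalence $D^b(\module C_{A^!}(z)_0)\cong D^b(\gr_{\Z_2}C_{A^!}(z))$ is then Dade's theorem applied to the strongly $\Z_2$-graded algebra from (1), just as you say. So your invocation of Dade is exactly right, but the Koszul-duality content you describe is packaged inside the \cite{SV} equivalence rather than something to be reproved.

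Second, on (2): this is not ``essentially formal'' from (1) and (3). A derived equivalence between $\umcm S$ and $D^b(\module C_{A^!}(z)_0)$ does not by itself translate ``$\mathrm{qgr}\,S$ has finite global dimension'' into ``$C_{A^!}(z)_0$ is semisimple''; one needs the dedicated argument (as in \cite[Proposition 5.2]{SV} and its refinement in \cite{HY}, relying on \cite{Ue1}) relating smoothness of $\mathrm{qgr}\,S$ to properties of the truncated syzygies and hence to the structure of $C(S)$. Your sketch for (1) is correct: the compatibility condition on $\theta_z$ is precisely the PBW/Braverman--Gaitsgory condition, the Frobenius pairing transports along the length filtration, and the regularity of $z$ gives some $f\in R^\perp$ with $f(\hat z)\neq 0$, hence $1\in C_{A^!}(z)_1\cdot C_{A^!}(z)_1$, which yields the strong $\Z_2$-grading.
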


Let $\sigma=(\sigma_{ij}):A\to M_2(A)$ be an algebra homomorphism such that each $\sigma_{ij}$ is a $\Z$-graded linear transformation of $A$ for any $i,j=1,2$. Write  $\sigma_{\mid_V}=({\sigma_{ij}}_{\mid_V}):V\to M_2(V)$ for the  restriction of $\sigma$ on $A_1=V$. One can extend $\sigma_{\mid_V}$ to an algebra homomorphism 
\begin{equation}\label{eq: lift of sigma}
\widetilde{\sigma}=(\widetilde{\sigma}_{ij}):T(V)\to M_2(T(V))
\end{equation}
such that 
$$
\widetilde{\sigma}(R)\subseteq M_2(R),
$$ 
where each $\widetilde{\sigma}_{ij}$ is a $\Z$-graded linear transformation of $T(V)$ for any $i,j=1,2$. Applying the $\Z$-graded linear dual functor $(-)^*$ to each component of $\widetilde{\sigma}$, one obtains an algebra homomorphism
\begin{equation}\label{eq:  tilde sigma star}
\widetilde{\sigma}^*=(\widetilde{\sigma}^*_{ij}):T(V^*)\to M_2(T(V^*)).
\end{equation}
For any $f\in R^\perp$, $r\in R$ and $i,j=1,2$, we have $\widetilde{\sigma}^*_{ij}(f)(r)=f(\widetilde{\sigma}_{ij}(r))=0.$ It implies
\begin{equation}\label{eq: tilde sigma star presreve R orth}\widetilde{\sigma}^*(R^\perp)\subset M_2(R^\perp).
\end{equation}

Suppose $\sigma_{ij}(z)=\delta_{ij}z$ for any $i,j=1,2$. Then the lift map $\widetilde{\sigma}=(\widetilde{\sigma}_{ij})$ defined in \eqref{eq: lift of sigma} satisfies
$$
\widetilde{\sigma}_{ij}(\hat{z})-\delta_{ij}\hat{z}\in R,
$$
for any $i,j=1,2$. So the dual map $\widetilde{\sigma}^*=(\widetilde{\sigma}^*_{ij}):T(V^*)\to M_2(T(V^*))$ defined in \eqref{eq:  tilde sigma star} satisfies 
$$
\widetilde{\sigma}^*_{ij}(f-f(\widehat{z}))=\widetilde{\sigma}^*_{ij}(f)-\delta_{ij}f(\widehat{z})=\widetilde{\sigma}^*_{ij}(f)-f(\widetilde{\sigma}_{ij}(\hat{z}))=
\widetilde{\sigma}^*_{ij}(f)-\widetilde{\sigma}^*_{ij}(f)(\widehat{z})\in (g-g(\hat{z})\mid g\in R^\perp),
$$
for any $f\in R^\perp$ and $i,j=1,2$, where the first equality holds since $\widetilde{\sigma}^*$ is an algebra homomorphism and $f(\hat{z})\in\kk$, and the final belonging holds by \eqref{eq: tilde sigma star presreve R orth} .  Hence, $\widetilde{\sigma}^*$ induces an algebra homomorphism
\begin{equation}\label{eq: dual map of simga from clifford deformation to its matrix}
    \sigma^!=(\sigma^!_{ij}):C_{A^!}(z)\to M_2(C_{A^!}(z)),
\end{equation}
where each $\sigma^!_{ij}$ is a $\Z_2$-graded linear transformation of $C_{A^!}(z)$ for any $i,j=1,2$.

\begin{lemma}\label{lem: inv of sigma and simga dual}
If $\sigma$ is invertible, then $\sigma^!$ is $t$-invertible. 
\end{lemma}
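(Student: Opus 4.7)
The plan is to exhibit an explicit $t$-inverse of $\sigma^!$ built from the inverse $\varphi = (\varphi_{ij})$ of $\sigma$. The first step, an easy one, is to observe that $\varphi$ also satisfies $\varphi_{ij}(z) = \delta_{ij}z$: applying the invertibility relation $\sum_k \varphi_{jk}\sigma_{ik} = \delta_{ij}\id_A$ to $z$ and using $\sigma_{ik}(z) = \delta_{ik}z$ collapses the sum immediately to $\varphi_{ji}(z) = \delta_{ij}z$. Hence the same construction producing $\sigma^!$ from $\sigma$ (via the lift and its graded dual, cf.\ \eqref{eq: lift of sigma}--\eqref{eq: dual map of simga from clifford deformation to its matrix}) applied to $\varphi$ yields a well-defined algebra homomorphism $\varphi^! = (\varphi^!_{ij}): C_{A^!}(z) \to M_2(C_{A^!}(z))$, which will be our candidate $t$-inverse.

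The core technical step is to lift the two invertibility identities from $A$ to the tensor algebra $T(V)$. Both $\widetilde{\sigma}$ and $\widetilde{\varphi}$ are the unique algebra extensions of $\sigma|_V$ and $\varphi|_V$, so the identities certainly hold on $V$. I would then prove by induction on the tensor degree, writing $x = y\cdot v$ with $y \in V^{\otimes n}$ and $v \in V$, that
$$\sum_{k=1}^2 \widetilde{\sigma}_{ki}\widetilde{\varphi}_{kj} = \delta_{ij}\id_{T(V)}, \qquad \sum_{k=1}^2 \widetilde{\varphi}_{jk}\widetilde{\sigma}_{ik} = \delta_{ij}\id_{T(V)}.$$
The inductive step uses the matrix-algebra-homomorphism identity $\widetilde{\sigma}_{ki}(ab) = \sum_m \widetilde{\sigma}_{km}(a)\widetilde{\sigma}_{mi}(b)$ (and the analogue for $\widetilde{\varphi}$) to reduce the $(n{+}1)$-case to the $n$-case, with the sum over the intermediate index collapsing via the invertibility on $V$.

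Finally, applying the graded linear dual $(-)^*$ reverses the order of composition, so the two identities on $T(V)$ dualize to
$$\sum_k \widetilde{\varphi}^*_{kj}\widetilde{\sigma}^*_{ki} = \delta_{ij}\id_{T(V^*)}, \qquad \sum_k \widetilde{\sigma}^*_{ik}\widetilde{\varphi}^*_{jk} = \delta_{ij}\id_{T(V^*)}.$$
Relabeling indices and using $\delta_{ij}=\delta_{ji}$, these are precisely the two defining identities making $\widetilde{\sigma}^*$ $t$-invertible with $t$-inverse $\widetilde{\varphi}^*$ in the sense of Definition \ref{def: inverse and t-inverse}. Both maps descend to the quotient $C_{A^!}(z)$, and the identities descend with them, yielding $\varphi^!$ as the desired $t$-inverse of $\sigma^!$. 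The main obstacle is the inductive lifting step: since the individual components $\widetilde{\sigma}_{ij}$ and $\widetilde{\varphi}_{ij}$ are not themselves algebra homomorphisms, one cannot simply extend the $V$-level invertibility formally, and the computation must weave the full matrix-homomorphism property together with the inductive hypothesis.
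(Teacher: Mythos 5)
Your proof is correct and takes essentially the same route as the paper: both you and the authors take $\varphi^!$ (obtained by applying the lift-and-dualize construction to the inverse $\varphi$) as the candidate, and both verify $\varphi_{ij}(z)=\delta_{ij}z$ so that $\varphi^!$ is well defined. The paper then simply asserts that $\varphi^!$ is the $t$-inverse without further comment; your inductive lifting of the two invertibility identities from $V$ to $T(V)$ (followed by dualization, which reverses composition and matches Definition \ref{def: inverse and t-inverse}) supplies exactly the routine verification the paper leaves implicit, and the computation you outline does go through.
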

\begin{proof}
Write $\varphi$ for the inverse of $\sigma$. We have $\delta_{ij}z=\sum_{k=1}^2\varphi_{ik}\sigma_{jk}(z)=\varphi_{ij}(z)$ for $i,j=1,2$. Hence, $\varphi^!$ is well defined by \eqref{eq: dual map of simga from clifford deformation to its matrix} and is the $t$-inverse of $\sigma^!$.
\end{proof}
%
%
%

We close this section by introducing a class of extensions of algebras, which is called \emph{semi-trivial extension} in \cite{Pa}.

\begin{theorem}\label{thm: semi-trivial extension}
	Let $E$ be an algebra and $M$ be an $E$-bimodule.	Let $\psi:M\otimes_EM\to E$ be an $E$-bimodule homomorphism such that $	m\psi(m'\otimes m'')=\psi(m\otimes m')m''$ for any $m,m',m''\in M$. Then 
	the vector space $E\oplus M$ with the multiplication
	\begin{flalign*}
&&	(x,m)\odot (x',m')=(xx'+\psi(m\otimes m'),xm'+mx'),\qquad \forall x,x'\in E,m,m'\in M,\ \ 
	\end{flalign*}
	is an algebra, called a \emph{semi-trivial extension} of $E$ by $M$ through $\psi$, denoted by $E\ltimes_{\psi}M$.
\end{theorem}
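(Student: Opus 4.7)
The statement asserts that a certain binary operation on $E\oplus M$ is an associative unital algebra multiplication, so the proof reduces to a direct verification of the axioms, and my plan is to organize that verification so as to expose where each hypothesis on $\psi$ is used.

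First I would check that $\odot$ is $\kk$-bilinear; this is immediate from the $\kk$-bilinearity of the algebra multiplication on $E$, the $E$-bimodule action on $M$, and the fact that $\psi$ factors through $M\otimes_E M$. Next, I would verify that $(1_E,0)$ is a two-sided unit: a direct substitution gives $(1_E,0)\odot(x,m)=(x+\psi(0\otimes m),m+0)=(x,m)$ and symmetrically on the right, so nothing beyond linearity is needed here.

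The substantive step is associativity, and the plan is to expand both
$((x,m)\odot(x',m'))\odot(x'',m'')$ and $(x,m)\odot((x',m')\odot(x'',m''))$
and match coordinates. For the $E$-component, I would collect the four terms that appear on each side and identify them in pairs using exactly the structure of $\psi$: the left $E$-linearity of $\psi$ handles $\psi(xm'\otimes m'')=x\psi(m'\otimes m'')$, the right $E$-linearity handles $\psi(m\otimes m'x'')=\psi(m\otimes m')x''$, and the relation $mx'\otimes m''=m\otimes x'm''$ in $M\otimes_E M$ is what makes the remaining middle term on each side agree. For the $M$-component, after cancelling the three terms coming purely from the bimodule action, the two sides differ by $\psi(m\otimes m')m''$ versus $m\psi(m'\otimes m'')$; these are equal precisely by the hypothesis imposed on $\psi$ in the statement.

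There is no real obstacle here — the argument is a bookkeeping exercise — but the conceptual point I would emphasize is that the hypothesis $m\psi(m'\otimes m'')=\psi(m\otimes m')m''$ is not auxiliary but is forced: it is exactly the compatibility needed for the $M$-coordinate of the product to be associative, while the $E$-coordinate is associative automatically from $\psi$ being an $E$-bimodule map out of $M\otimes_E M$. This clean separation is what justifies calling $\psi$ the defining datum of the extension $E\ltimes_\psi M$.
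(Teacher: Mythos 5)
Your proposal is correct and follows essentially the same approach as the paper's (very terse) proof: direct verification that $(1,0)$ is the unit and that associativity holds by expanding both sides, using the $E$-bimodule property of $\psi$, the middle $E$-linearity from factoring through $M\otimes_E M$, and the hypothesis $m\psi(m'\otimes m'')=\psi(m\otimes m')m''$. Your write-up simply makes explicit the bookkeeping the paper leaves to the reader.
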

\begin{proof}
It is clear $(1,0)$ is the identity. Since $\psi$ is an $E$-bimodule homomorphism and $	m\psi(m'\otimes m'')=\psi(m\otimes m')m''$  for any $m,m',m''\in M$, the associativity of this multiplication holds. 
\end{proof}

\begin{remark}\label{rem: semi-trivial extension}
	\begin{enumerate}
		
	\item The semi-trivial extension $E\ltimes_{\psi}M$ is the trivial extension $E\ltimes M$ if $\psi=0$. 

        \item A semi-trivial extension $E\ltimes_{\psi}M$ can be viewed as a $\Z_2$-graded algebra (or, a superalgebra) if $\deg(x,0)=0,\deg(0,m)=1$ for any $x\in E,m\in M$.
	
		\item\label{rem: G-graded semi-trivial extension} Suppose $E$ is a $G$-graded algebra, $M$ is a $G$-graded $E$-bimodule and $\psi$ is a $G$-graded map where $G$ is an abelian group.
		Then $E\ltimes_{\psi}M$ is a $\Z_2\times G$-graded algebra whose homogeneous space $(E\ltimes_{\psi}M)_{(0,g)}=E_g$ and $(E\ltimes_{\psi}M)_{(1,g)}=M_g$  for any $g\in G$,  and also a $G$-graded algebra by forgetting the first $\Z_2$-degree.

In particular, if $G=\Z_2$, we always view $\Z_2\times \Z_2$-graded algebra $E\ltimes_{\psi}M$ as a $\Z_2$-graded algebra by forgetting the first $\Z_2$-degree in the sequel, that is,
$$
(E\ltimes_{\psi}M)_0=E_0\oplus M_0,\qquad (E\ltimes_{\psi}M)_1=E_1\oplus M_1.
$$
		
	\end{enumerate}	
\end{remark}
\begin{example}\label{ex: twisted E oplus E}
	\begin{enumerate}
		\item Let $E$ be an algebra and $\psi:E\otimes_EE\to E$ be the multiplication of $E$. Then one obtains a semi-trivial extension $E\ltimes_{\psi}E$.
		
		\item  Let $E$ be a $\Z_2$-graded algebra and $\mu$ be a $\Z_2$-graded algebra automorphism of $E$ such that $\mu^2=\id$. Then 
		$$
		\begin{array}{cclc}
			\psi:&{_{\mu}E(1)}\otimes_E{_{\mu}E(1)}&\to& E\\
			 ~&a\otimes b&\mapsto &\mu(a)b,
		\end{array}
		$$
	 	is a $\Z_2$-graded $E$-bimodule homomorphism and satisfies $\psi(a\otimes b)\cdot c=a\psi(b\otimes c)$ for any $a,b,c\in {_{\mu}E(1)}$. 
	 	Hence, we have a semi-trivial extension  $E\ltimes_{\psi}\left({_\mu}E(1)\right)$ which is a  $\Z_2$-graded algebra (see Remark \ref{rem: semi-trivial extension}(3) when $G=\Z_2$). More precisely, $(E\ltimes_{\psi}\left({_\mu}E(1)\right))_0=E_0\oplus E_1$ and $(E\ltimes_{\psi}\left({_\mu}E(1)\right))_1=E_1\oplus E_0$ with multiplication
   $$
   (a,b)\odot (a',b')=(aa'+\mu(b)b',\mu(a)b'+ba'),
   $$
for any homogeneous elements $(a,b),(a',b')\in E\ltimes_{\psi}\left({_\mu}E(1)\right).$
	\end{enumerate}
\end{example}

\section{Two Classes of Twisted Structures}
In this section, we introduce a class of twisting systems for matrix algebra $M_2(E)$ over some algebra $E$ and a class of twisted matrix algebras. As an application, there is a twisted structure on the direct product $E\times E$ a by choosing a subalgebra of a twisted matrix algebra.

\subsection{Twisted matrix algebras}
Let $M_2(\kk)$ be a $\Z_{2}$-graded algebra whose $\Z_2$-grading is given by  
$$
M_2(\kk)_0=
\begin{pmatrix}
	\kk & 0\\
	0 & \kk
\end{pmatrix},\qquad
M_2(\kk)_1=
\begin{pmatrix}
	0 & \kk\\
	\kk & 0
\end{pmatrix}.
$$
Let $\II^{(0)}_{1},\II^{(0)}_{2}$ be invertible and a basis of $M_2(k)_0$, and $\II^{(1)}_{1},\II^{(1)}_{2}$ be invertible and a basis of $M_2(k)_1$. We say  $\II=\{\II^{(i)}_{j}\mid i\in\Z_2,j=1,2\}$ is an   invertible $\Z_2$-graded basis of $M_2(\kk)$. Let $\gamma_1,\gamma_2\in\kk$ such that
$$
\gamma_1\II^{(0)}_1+\gamma_2\II^{(0)}_2=I_2,
$$
where $I_2$ is the $2\times 2$ identity matrix. For any $i,i'\in\Z_2, j=1,2$, write 
\begin{equation}\label{eq: symbol of L}
L^{(ii')}_{j}=
\begin{pmatrix}
	l^{(ii')}_{1j1} &l^{(ii')}_{1j2}\\
	l^{(ii')}_{2j1} &l^{(ii')}_{2j2}	
\end{pmatrix}\in M_2(\kk),
\end{equation}
such that
$$
\II^{(i)}_{j}(\II^{(i')}_{1}, \II^{(i')}_{2})=(\II^{(i+i')}_{1}, \II^{(i+i')}_{2})L^{(ii')}_{j}.
$$
More precisely, for any $i,i'\in\Z_2, j,j'=1,2,$
\begin{equation}\label{eq: multi for II with element in L}
	\II^{(i)}_j\II^{(i')}_{j'}=\sum_{s=1}^2\II^{(i+i')}_sl^{(ii')}_{sjj'}.
\end{equation}
It is not hard to check that each $L^{(ii')}_j$ is invertible for any $i,i'\in \Z_2$ and $j=1,2$.

\begin{lemma}\label{lem: relations for l and gamma}
	\begin{enumerate}
		\item For any $i,i',i''\in \Z_2$ and $j,j',j'',t=1,2$,
		$$
		\sum_{s=1}^2l^{(i,i'+i'')}_{tjs}l^{(i'i'')}_{sj'j''}=\sum_{s=1}^2l^{(i+i',i'')}_{tsj''}l^{(ii')}_{sjj'}.
		$$
		
		\item For any $i\in\Z_2$ and $s,j=1,2$, 	$\sum_{t=1}^2l^{(i0)}_{sjt}\gamma_t=\delta_{sj}$.
		
		\item For any $i\in\Z_2$ and $s,t=1,2$, $\sum_{j=1}^2l^{(0i)}_{sjt}\gamma_j=\delta_{st}$.
	\end{enumerate}
	
\end{lemma}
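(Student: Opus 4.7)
The plan is to read off all three identities directly from the algebra axioms for $M_2(\kk)$ expressed in terms of the basis $\II = \{\II^{(i)}_j\}$, using the structure constants $l^{(ii')}_{sjj'}$ and $\gamma_j$ introduced in \eqref{eq: multi for II with element in L} and in the expansion $I_2 = \gamma_1 \II^{(0)}_1 + \gamma_2 \II^{(0)}_2$. Part (1) will come from associativity, while parts (2) and (3) will come from the right and left unit axioms respectively. No further structural input about $M_2(\kk)$ is needed beyond the fact that for each fixed $i+i'+i''\in\Z_2$, the elements $\II^{(i+i'+i'')}_1,\II^{(i+i'+i'')}_2$ are linearly independent, which lets me equate coefficients.

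For (1), I would expand both sides of $\II^{(i)}_j\bigl(\II^{(i')}_{j'}\II^{(i'')}_{j''}\bigr) = \bigl(\II^{(i)}_j\II^{(i')}_{j'}\bigr)\II^{(i'')}_{j''}$ using \eqref{eq: multi for II with element in L} twice. The left side becomes $\sum_t \II^{(i+i'+i'')}_t \sum_s l^{(i,i'+i'')}_{tjs} l^{(i'i'')}_{sj'j''}$ and the right side becomes $\sum_t \II^{(i+i'+i'')}_t \sum_s l^{(i+i',i'')}_{tsj''} l^{(ii')}_{sjj'}$. Comparing coefficients of $\II^{(i+i'+i'')}_t$ gives the stated identity.

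For (2), I would use $\II^{(i)}_j = \II^{(i)}_j \cdot I_2 = \sum_t \gamma_t\, \II^{(i)}_j \II^{(0)}_t = \sum_s \II^{(i)}_s \Bigl(\sum_t l^{(i0)}_{sjt}\gamma_t\Bigr)$, and conclude $\sum_t l^{(i0)}_{sjt}\gamma_t = \delta_{sj}$ by linear independence of $\II^{(i)}_1,\II^{(i)}_2$. Part (3) is the mirror computation starting from $\II^{(i)}_t = I_2 \cdot \II^{(i)}_t = \sum_j \gamma_j\, \II^{(0)}_j \II^{(i)}_t = \sum_s \II^{(i)}_s \Bigl(\sum_j l^{(0i)}_{sjt}\gamma_j\Bigr)$.

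There is really no main obstacle: the whole lemma is a bookkeeping exercise translating associativity and the two unit axioms through the expansion \eqref{eq: multi for II with element in L}. The only thing to be careful about is keeping the index order in $l^{(ii')}_{sjj'}$ consistent with the convention fixed in \eqref{eq: symbol of L} and \eqref{eq: multi for II with element in L}, since the roles of the two lower indices $j,j'$ are not symmetric; getting this right in the associativity calculation is the one place where a sign or index error could slip in.
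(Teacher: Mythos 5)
Your proof is correct and follows exactly the same route as the paper's: part (1) from associativity of $\II^{(i)}_j\II^{(i')}_{j'}\II^{(i'')}_{j''}$, and parts (2) and (3) from the right and left unit axioms after expanding $I_2=\gamma_1\II^{(0)}_1+\gamma_2\II^{(0)}_2$, then comparing coefficients against the basis $\II^{(\cdot)}_1,\II^{(\cdot)}_2$. The only difference is that you write out the associativity expansion in (1) explicitly, which the paper leaves implicit.
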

\begin{proof}
	(1) It follows by the associativity of  $\II^{(i)}_{j}\II^{(i)}_{j'}\II^{(i'')}_{j''}$.
	
	(2) By \eqref{eq: multi for II with element in L} with $i'=0$, 
	$
	\sum_{s=1}^2\left(\sum_{t=1}^2l^{(i0)}_{sjt}\gamma_t\right)\II^{(i)}_s=\sum_{t=1}^2\II^{(i)}_j\II^{(0)}_t\gamma_t=\II^{(i)}_j.
	$ 
	
	(3) By \eqref{eq: multi for II with element in L} with $i=0$,
	$
	\sum_{s=1}^2\left(\sum_{j=1}^2\gamma_jl^{(0i')}_{sjt}\right)\II^{(i')}_s=\sum_{j=1}^2\gamma_j\II^{(0)}_j\II^{(i')}_t=\II^{(i')}_t.
	$
\end{proof}

Let $E$ be an algebra. The basis $\II$ of $M_2(\kk)$ provides a decomposition for $\Z_2$-graded vector space $M_2(E)$ as follows, for any $i\in\Z_2$
\begin{align*}
	M_2(E)_{i}=\left\{\II^{(i)}_{1}x\mid x\in E\right\}\oplus\left \{\II^{(i)}_{2}x\mid x\in E\right\}.
\end{align*}

To introduce a twisted system of the matrix algebra $M_2(E)$, we consider a class of linear maps from $E$ to the matrix algebra $M_2(E)$.
\begin{proposition}\label{prop: equivalent condition for theta}
	Let $\theta^{(i)}=(\theta^{(i)}_{jj'}): E\to M_2(E)
	$ be a linear map for $i\in \Z_2$. Then the following are equivalent:
	\begin{enumerate}
		\item 	For any $x,x'\in E$, $i,i',i''\in \Z_2$ and $j,j',j'',v=1,2$,
		\begin{equation*}
			\sum_{s,t,u=1}^2 l^{(i+i',i'')}_{vtu}l^{(ii')}_{tjs}\theta^{(i'')}_{uj''}\left(\theta^{(i')}_{sj'}(x)x'\right)=\sum_{s,t,u=1}^2 l^{(i,i'+i'')}_{vjs}
			l^{(i'i'')}_{tj'u}\theta^{(i'+i'')}_{st}(x)\theta^{(i'')}_{uj''}(x').
		\end{equation*}
		
		\item 	For any $x,x'\in E$, $i,i',i''\in \Z_2$ and $j,j',j'',v=1,2$, 
		$$
		\sum_{s,t,u=1}^2 l^{(i,i'+i'')}_{vjt}l^{(i'i'')}_{tsu }\theta^{(i'')}_{uj''}\left(\theta^{(i')}_{sj'}(x)x'\right)=\sum_{s,t,u=1}^2 l^{(i,i'+i'')}_{vjs}
		l^{(i'i'')}_{tj'u}\theta^{(i'+i'')}_{st}(x)\theta^{(i'')}_{uj''}(x').
		$$
		
		\item For any $x,x'\in E$, $i',i''\in \Z_2$ and $j',j'',p=1,2,$
		\begin{equation*}
			\sum_{s,u=1}^2 l^{(i'i'')}_{psu }\theta^{(i'')}_{uj''}\left(\theta^{(i')}_{sj'}(x)x'\right)=\sum_{t,u=1}^2 
			l^{(i'i'')}_{tj'u}\theta^{(i'+i'')}_{pt}(x)\theta^{(i'')}_{uj''}(x').
		\end{equation*}
	\end{enumerate}
\end{proposition}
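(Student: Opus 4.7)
The three identities differ only in how the coefficients in front of the "mixed" expressions $\theta^{(i'')}_{uj''}\bigl(\theta^{(i')}_{sj'}(x)x'\bigr)$ and $\theta^{(i'+i'')}_{st}(x)\theta^{(i'')}_{uj''}(x')$ are arranged, so my plan is to show $(1)\Leftrightarrow(2)$ purely by a coefficient identity, and $(2)\Leftrightarrow(3)$ by invertibility of an $L$-matrix.

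For $(1)\Leftrightarrow(2)$, I would apply Lemma~\ref{lem: relations for l and gamma}(1) directly to the coefficient appearing on the left-hand sides. Relabeling the indices of the lemma by $t\mapsto v$, $s\mapsto t$, $j''\mapsto u$ and $j'\mapsto s$, the identity reads
$$
\sum_{t=1}^2 l^{(i+i',i'')}_{vtu}\,l^{(ii')}_{tjs}\;=\;\sum_{t=1}^2 l^{(i,i'+i'')}_{vjt}\,l^{(i'i'')}_{tsu},
$$
so the coefficient of $\theta^{(i'')}_{uj''}\bigl(\theta^{(i')}_{sj'}(x)x'\bigr)$ obtained after summing over $t$ agrees in (1) and (2). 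Since the right-hand sides of (1) and (2) are literally the same, this proves equivalence.

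For $(2)\Leftrightarrow(3)$, I first note that (2) has the form
$$
\sum_{s=1}^2 l^{(i,i'+i'')}_{vjs}\,U_s(x,x')\;=\;\sum_{s=1}^2 l^{(i,i'+i'')}_{vjs}\,W_s(x,x'),
$$
where
$$
U_p(x,x'):=\sum_{s,u=1}^2 l^{(i'i'')}_{psu}\,\theta^{(i'')}_{uj''}\bigl(\theta^{(i')}_{sj'}(x)x'\bigr),\qquad W_p(x,x'):=\sum_{t,u=1}^2 l^{(i'i'')}_{tj'u}\,\theta^{(i'+i'')}_{pt}(x)\theta^{(i'')}_{uj''}(x'),
$$
are exactly the two sides of (3). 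The implication $(3)\Rightarrow(2)$ is then obtained by multiplying (3) by $l^{(i,i'+i'')}_{vjp}$ and summing over $p$, and renaming the dummy index. For the converse $(2)\Rightarrow(3)$, I use that for fixed $i,i',i'',j$, the matrix $L^{(i,i'+i'')}_j=\bigl(l^{(i,i'+i'')}_{vjs}\bigr)_{v,s}$ is invertible by assumption, so $\sum_s l^{(i,i'+i'')}_{vjs}(U_s-W_s)=0$ for all $v$ forces $U_s=W_s$ for all $s$, which is (3) with $p=s$.

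The only real bookkeeping obstacle is choosing the correct renaming of dummy indices in Lemma~\ref{lem: relations for l and gamma}(1), since the three indices of $l^{(\cdot,\cdot)}_{\cdot\cdot\cdot}$ play very different roles (the middle slot is fixed while the outer slots are summed); once that is done, both equivalences reduce to the coefficient identity and to the invertibility of $L^{(i,i'+i'')}_j$ respectively. No deeper structural argument is needed.
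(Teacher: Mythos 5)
Your proof is correct and follows essentially the same route as the paper: equivalence of (1) and (2) by the coefficient identity of Lemma~\ref{lem: relations for l and gamma}(1) after renaming dummy indices, and equivalence of (2) and (3) by exploiting the invertibility of the matrix $L^{(i,i'+i'')}_j$, which the paper implements by multiplying by the entries $g^{(i,i'+i'')}_{pjv}$ of its inverse (respectively by $l^{(i,i'+i'')}_{vjp}$) and summing. The only cosmetic difference is that you phrase the $(2)\Rightarrow(3)$ direction as a linear-algebra argument ($L$ invertible forces $U_p=W_p$) rather than explicitly contracting with the inverse matrix; these are the same computation.
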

\begin{proof}
	(1)$\Leftrightarrow$(2) follows by Lemma \ref{lem: relations for l and gamma}(1).
	
	(2)$\Rightarrow$(3) Write $(L^{(ii')}_j)^{-1}=(g^{(ii')}_{j'jj''})$ for any $i,i'\in\Z_2$ and $j,j',j''=1,2.$ Multiplying the equation in (2) by $g^{(i,i'+i'')}_{pjv}$ on both sides and taking the sum over $v$ from $1$ to $2$ for any $p=1,2$, one obtains (3).
	
	(3)$\Rightarrow$(2)  Multiply the equation in (3) by  $l^{(i,i'+i'')}_{vjp}$ for any $j,v=1,2$ on both sides and take the sum over $p$ from $1$ to $2$.
\end{proof}

\begin{lemma}\label{lem: properitis of theta}
	Let $\theta^{(i)}=(\theta^{(i)}_{jj'}): E\to M_2(E)
$ be a linear map for $i\in \Z_2$. Suppose $\theta^{(0)}$ and $\theta^{(1)}$ are $t$-invertible with the $t$-inverses $\varphi^{(0)}$ and $\varphi^{(1)}$ respectively, and satisfy the equivalent conditions of Proposition \ref{prop: equivalent condition for theta}. Then
		$$
		\sum_{u,j=1}^2 l^{(ii')}_{pqu }\theta^{(i')}_{uj}\left(x\varphi^{(i')}_{rj}(x')\right)=\sum_{t,j=1}^2 
		l^{(ii')}_{tjr}\theta^{(i+i')}_{pt}(\varphi^{(i)}_{qj}(x))x',
		$$
		for any $x,x'\in E$, $i,i'\in \Z_2$ and $p,q,r=1,2$.	
	\end{lemma}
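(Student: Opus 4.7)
The plan is to deduce the identity from Proposition \ref{prop: equivalent condition for theta}(3) by specializing $x$ and $x'$ to suitable expressions and then collapsing the resulting sums via the two $t$-invertibility identities
\begin{equation*}
\sum_{k=1}^{2}\theta^{(i)}_{ka}\varphi^{(i)}_{kb}=\delta_{ab}\id_{E},\qquad \sum_{k=1}^{2}\varphi^{(i)}_{bk}\theta^{(i)}_{ak}=\delta_{ab}\id_{E}
\end{equation*}
of Definition \ref{def: inverse and t-inverse}, together with the algebra-homomorphism properties $\theta^{(i)}_{uj}(ab)=\sum_{v}\theta^{(i)}_{uv}(a)\theta^{(i)}_{vj}(b)$ and the analogous one for $\varphi^{(i)}$.

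Concretely, I would first relabel Proposition \ref{prop: equivalent condition for theta}(3) with $i'\mapsto i$, $i''\mapsto i'$, $j'\mapsto q$, $j''\mapsto k$ and then apply it with the substitution $x'\mapsto \varphi^{(i')}_{rk}(z)$, summing over $k$: this moves the factor $\varphi^{(i')}_{rj}(\,\cdot\,)$ into the argument of the outer $\theta^{(i')}_{uj}$ on the left-hand side, matching the form of the lemma's left-hand side. On the right-hand side, distributing $\varphi^{(i')}$ across the resulting product via its algebra-homomorphism property produces pairs of the shape $\sum_{k}\varphi^{(i')}_{vk}\theta^{(i')}_{uk}$, which collapse to $\delta_{vu}$ by the second $t$-inverse identity. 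A symmetric substitution $x\mapsto \varphi^{(i)}_{qm}(y)$ (with appropriate summation over $m$), coupled with the first $t$-inverse identity, then inserts the $\varphi^{(i)}_{qj}(\,\cdot\,)$-factor inside $\theta^{(i+i')}_{pt}$ on the right-hand side. After reindexing and invoking the algebra-homomorphism property of $\theta^{(i')}$ to recombine $x$ with $\varphi^{(i')}_{rj}(x')$ inside $\theta^{(i')}_{uj}$, one recovers the claimed formula.

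The principal obstacle is bookkeeping. The two given $t$-inverse identities cover only two of the four conceivable $\theta$--$\varphi$ cancellation patterns: they differ both in composition order ($\theta\circ\varphi$ versus $\varphi\circ\theta$) and in which of the two matrix indices is summed. Since $M_{2}(\mathrm{End}(E))$ is noncommutative, one cannot naively ``transpose'' between them; patterns of the form $\sum_{k}\theta_{ik}\varphi_{jk}$ or $\sum_{k}\varphi_{ki}\theta_{kj}$ do not reduce to Kronecker deltas. Every cancellation that arises during the reduction must therefore be set up so that both its composition order and its summed index match exactly one of the two available identities; this dictates precisely where each $\varphi$ is to be inserted (inside or outside the preexisting $\theta$, in the left or right argument of the product) and over which index one sums. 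Once the correct substitution pattern is in place, the remainder is a routine reindexing exercise, invoking Lemma \ref{lem: relations for l and gamma} if any rearrangement of the $l^{(\cdot\cdot)}$-coefficients is required.
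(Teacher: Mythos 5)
Your plan coincides with the paper's own two-line proof: relabel the indices in Proposition~\ref{prop: equivalent condition for theta}(3), substitute $\varphi^{(i)}_{q\cdot}$ and $\varphi^{(i')}_{r\cdot}$ into the two arguments, sum over the freshly created dummy indices, and let the $t$-inverse identities collapse the resulting compositions to Kronecker deltas. So the strategy is correct and essentially identical. There are, however, three execution details in your sketch that are off and would need repair before the argument closes.

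First, the relabeling $j'\mapsto q$ burns the symbol $q$ as an index that is \emph{already present in the formula}, whereas you later want $q$ to be a \emph{newly introduced} free index coming from the substitution $x\mapsto\varphi^{(i)}_{qm}(y)$. After your relabeling, the left side of Proposition~\ref{prop: equivalent condition for theta}(3) carries $\theta^{(i)}_{sq}$ with $q$ fixed; substituting $\varphi^{(i)}_{qm}$ inside and summing only over $m$ yields $\theta^{(i)}_{sq}\circ\varphi^{(i)}_{qm}$, and the $q$'s do not align to trigger any cancellation. The paper instead relabels $j'\mapsto j$ (a fresh dummy), $j''\mapsto j'$, then substitutes $x\mapsto\varphi^{(i)}_{qj}(x)$, $x'\mapsto\varphi^{(i')}_{rj'}(x')$ and sums over $j,j'$; on the left this produces $\sum_j\theta^{(i)}_{sj}\circ\varphi^{(i)}_{qj}=\delta_{sq}\id$ inside the outer $\theta^{(i')}_{uj'}$, which kills $s$ and produces the coefficient $l^{(ii')}_{pqu}$, while on the right $\sum_{j'}\theta^{(i')}_{uj'}\circ\varphi^{(i')}_{rj'}=\delta_{ur}\id$ kills $u$ and yields $l^{(ii')}_{tjr}$.

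Second, your cautionary paragraph has the delta-reducing patterns exactly backwards. With $\theta^{(i)}$ $t$-invertible and $\varphi^{(i)}$ its $t$-inverse, Definition~\ref{def: inverse and t-inverse} (reading $\theta$ for its $\varphi$ and $\varphi$ for its $\sigma$) gives precisely
\begin{align*}
\sum_{k=1}^{2}\varphi^{(i)}_{ka}\circ\theta^{(i)}_{kb}=\delta_{ab}\id,\qquad
\sum_{k=1}^{2}\theta^{(i)}_{ak}\circ\varphi^{(i)}_{bk}=\delta_{ab}\id,
\end{align*}
i.e.\ the patterns $\sum_{k}\varphi_{ka}\theta_{kb}$ and $\sum_{k}\theta_{ak}\varphi_{bk}$ \emph{do} reduce to $\delta_{ab}$, while $\sum_k\theta_{ka}\varphi_{kb}$ and $\sum_k\varphi_{ak}\theta_{bk}$ generally do not. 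You asserted the opposite. Moreover, in this lemma both collapses needed are instances of the \emph{second} identity above; you do not need the first one at all here, so the remark that each substitution is ``coupled'' to a different $t$-inverse identity is also inaccurate.

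Third, no algebra-homomorphism property of $\theta$ or $\varphi$ is required, and there is nothing to ``distribute.'' After substituting into Proposition~\ref{prop: equivalent condition for theta}(3), the arguments contain plain compositions of linear maps and the summations collapse by linearity plus the $t$-inverse identity alone. Once the relabeling and the identification of the correct identity are fixed, the rest of your reindexing is routine and matches the paper's proof.
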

	
	\begin{proof}
		Rewrite the indexes of Proposition \ref{prop: equivalent condition for theta}(3):
		\begin{equation*}
			\sum_{s,u=1}^2 l^{(ii')}_{psu }\theta^{(i')}_{uj'}\left(\theta^{(i)}_{sj}(x)x'\right)=\sum_{t,u=1}^2 
			l^{(ii')}_{tju}\theta^{(i+i')}_{pt}(x)\theta^{(i')}_{uj'}(x'),
		\end{equation*}
		for any $x,x'\in E$, $i,i'\in \Z_2$ and $j,j',p=1,2$.
		
		Replacing $x$ and $x'$ by $\varphi^{(i)}_{qj}(x)$ and $\varphi^{(i')}_{rj'}(x')$ respectively and taking the sum over $j,j'$ both from $1$ to $2$,  the result follows.
	\end{proof}
	\begin{lemma}\label{lem: prperties of theta(1) and varphi(1)}
		Retain the assumptions of Lemma \ref{lem: properitis of theta}. Then 
		$\theta^{(0)}(1)\in GL_2(\kk)$ if and only if $\theta^{(1)}(1)\in GL_2(\kk)$. In this case, 	$\varphi^{(i)}(1)\in GL_2(\kk)$ for $i\in\Z_2$. 
	\end{lemma}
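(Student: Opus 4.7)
The plan is to prove each direction of the equivalence by specializing Proposition \ref{prop: equivalent condition for theta}(3) with a carefully chosen $(i',i'',x,x')$ and reducing to a $\kk$-linear problem. The $t$-invertibility of $\theta^{(i)}$ (Definition \ref{def: inverse and t-inverse} applied with $\sigma=\varphi^{(i)},\varphi=\theta^{(i)}$) provides two identities in $\mathrm{End}_\kk(E)$,
$$\sum_k\varphi^{(i)}_{ki}\theta^{(i)}_{kj}=\delta_{ij}\id_E\ \ (\star), \qquad \sum_{j''}\theta^{(i)}_{uj''}\varphi^{(i)}_{kj''}=\delta_{uk}\id_E\ \ (\star\star),$$
and we also exploit that each $L^{(ii')}_j\in M_2(\kk)$ is invertible, which is immediate from the definition.

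For the forward implication, assume $\theta^{(0)}(1)\in GL_2(\kk)$ and set $c_{sj'}:=\theta^{(0)}_{sj'}(1)\in\kk$. Specializing Proposition \ref{prop: equivalent condition for theta}(3) to $i'=0,\ i''=1,\ x=1$ with $x'\in E$ arbitrary, the $\kk$-linearity of each $\theta^{(1)}_{uj''}$ pulls the scalars $c_{sj'}$ through and produces an operator identity on $E$,
$$\sum_u L_{(\alpha-\gamma)_{pj'u}}\circ\theta^{(1)}_{uj''}=0, \qquad \forall j''\in\{1,2\},$$
where $\alpha_{pj'u}\in\kk$, $\gamma_{pj'u}=\sum_t l^{(01)}_{tj'u}\theta^{(1)}_{pt}(1)\in E$, and $L_a$ denotes left multiplication by $a\in E$. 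Composing on the right with $\varphi^{(1)}_{kj''}$ and summing over $j''$ collapses the inner sum to $\delta_{uk}\id_E$ by $(\star\star)$, yielding $L_{(\alpha-\gamma)_{pj'k}}=0$ and hence $(\alpha-\gamma)_{pj'k}=0$. Rewritten, this reads $\sum_t l^{(01)}_{tj'k}\theta^{(1)}_{pt}(1)\in\kk$ for all $p,j',k$. The $4\times 2$ coefficient matrix $M$ with $((j',k),t)$-entry $l^{(01)}_{tj'k}$ has $\kk$-rank two (its two rows indexed by $j'=1$ already form the invertible matrix $L^{(01)}_1$), so any $\kk$-left inverse of $M$ forces $\theta^{(1)}_{pt}(1)\in\kk$ for all $p,t$.

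For the reverse implication, assume $\theta^{(1)}(1)\in GL_2(\kk)$ and specialize Proposition \ref{prop: equivalent condition for theta}(3) to $i'=i''=1,\ x=x'=1$. Since every entry of $\theta^{(1)}(1)$ is scalar, after commuting scalars the identity simplifies to
$$\sum_t \delta^{(1)}_{j'j''t}\theta^{(0)}_{pt}(1)\in\kk, \qquad \delta^{(1)}_{j'j''t}:=\sum_u l^{(11)}_{tj'u}\theta^{(1)}_{uj''}(1)\in\kk.$$
The matrix $\Delta^{(j')}:=(\delta^{(1)}_{j'j''t})_{j'',t}$ is the transpose of $L^{(11)}_{j'}\theta^{(1)}(1)$ and therefore lies in $GL_2(\kk)$; applying $(\Delta^{(j')})^{-1}\in M_2(\kk)$ to the $E$-valued column $(\theta^{(0)}_{pt}(1))_t$ extracts $\theta^{(0)}_{pt}(1)\in\kk$.

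With $\theta^{(i)}(1)\in M_2(\kk)$ established, the invertibility claim and the assertion about $\varphi^{(i)}(1)$ follow together. Evaluating $(\star)$ at $1$ and commuting the scalar $\theta^{(i)}_{kj}(1)$ past $\varphi^{(i)}_{ki}$ produces $\theta^{(i)}(1)^T\varphi^{(i)}(1)=I_2$ in $M_2(E)$. If $\det\theta^{(i)}(1)$ vanished, the $\kk$-level adjugate relation $\mathrm{adj}(\theta^{(i)}(1)^T)\,\theta^{(i)}(1)^T=0$ right-multiplied by $\varphi^{(i)}(1)$ would give $\mathrm{adj}(\theta^{(i)}(1)^T)=0$, hence $\theta^{(i)}(1)=0$, contradicting $\theta^{(i)}(1)^T\varphi^{(i)}(1)=I_2$. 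So $\theta^{(i)}(1)\in GL_2(\kk)$ and then $\varphi^{(i)}(1)=(\theta^{(i)}(1)^T)^{-1}\in GL_2(\kk)$. The main obstacle is the forward-direction operator-algebra step: the ``coefficients'' $(\alpha-\gamma)_{pj'u}$ act on $E$ by left multiplication and depend on $u$, which forbids a naive scalar-matrix inversion; one must compose on the correct side with $\varphi^{(1)}_{kj''}$---the order dictated by the $\theta\circ\varphi$ ordering of $(\star\star)$---to isolate a single index.
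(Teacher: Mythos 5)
Your proof is correct, but it takes a genuinely different path than the paper's. The paper's argument first extracts $\varphi^{(0)}(1)\in GL_2(\kk)$ directly from $(\star)$ evaluated at $1$, then specializes Lemma \ref{lem: properitis of theta} at $x=x'=1$, $i=0$, $i'=1$; the $j$-sums collapse to Kronecker deltas and one lands immediately on the conjugation identity $\theta^{(1)}(1)=\overline{L}^{(01)}_{1,2;r}\,\theta^{(0)}(1)\,(\overline{L}^{(01)}_{1,2;r})^{-1}$, which simultaneously gives $\theta^{(1)}(1)\in M_2(\kk)$ and invertibility in a single step. You instead work directly from Proposition \ref{prop: equivalent condition for theta}(3) with $x=1$ (and $x'$ arbitrary), obtain an operator identity, compose on the right with $\varphi^{(1)}$ to kill the $\theta^{(1)}$-factor, and read off $\theta^{(1)}(1)\in M_2(\kk)$ by inverting a rank-two $\kk$-matrix; only afterwards do you prove invertibility, via a separate adjugate argument. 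This decoupling is the main structural difference, and your adjugate step isolates a clean standalone fact the paper leaves implicit: $t$-invertibility of $\theta^{(i)}$ together with $\theta^{(i)}(1)\in M_2(\kk)$ already forces $\theta^{(i)}(1)\in GL_2(\kk)$ and $\varphi^{(i)}(1)=(\theta^{(i)}(1)^T)^{-1}$. The paper's route is shorter because Lemma \ref{lem: properitis of theta} has already absorbed exactly the $\varphi$-compositions you perform by hand, and because producing $\theta^{(1)}(1)$ as an explicit $\kk$-matrix conjugate of $\theta^{(0)}(1)$ renders the invertibility discussion unnecessary; your route is longer but entirely self-contained from Proposition \ref{prop: equivalent condition for theta}(3). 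One cosmetic slip: the $j'=1$ rows of your $4\times2$ matrix $M$ form $(L^{(01)}_1)^T$ rather than $L^{(01)}_1$ itself, but since transpose preserves invertibility the rank conclusion stands.
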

	\begin{proof}
		Suppose $\theta^{(0)}(1)\in GL_2(\kk)$,
		$(\varphi^{(0)}(1))^T\theta^{(0)}(1)=((\varphi^{(0)})^T\theta^{(0)})(1)=I_2.$ So $\varphi^{(0)}(1)\in GL_2(\kk)$.
		
		Taking $x=x'=1$ and $i=0,i'=1$ in Lemma \ref{lem: properitis of theta}, we have 
		$$
		l^{(01)}_{pqr}=\sum_{t,j=1}^2 
		l^{(01)}_{tjr}\theta^{(1)}_{pt}(\varphi^{(0)}_{qj}(1))=\sum_{t,j=1}^2 
		l^{(01)}_{tjr}\theta^{(1)}_{pt}(1)\varphi^{(0)}_{qj}(1),
		$$
		for any $p,q,r=1,2$. Write those equations in the form of matrices:
		$$
		\overline{L}^{(01)}_{1,2;r}=\theta^{(1)}(1)\overline{L}^{(01)}_{1,2;r}(\varphi^{(0)}(1))^T,
		$$
		where $\overline{L}^{(01)}_{1,2;r}=\begin{pmatrix}
			l^{(01)}_{11r}&	l^{(01)}_{12r}\\
			l^{(01)}_{21r}&	l^{(01)}_{22r}
		\end{pmatrix}$ for any $r=1,2$. Note that for any $r=1,2$, $\{\II^{(0)}_1\II^{(1)}_r,\II^{(0)}_2\II^{(1)}_r\}$ is also a basis of $M_2(\kk)_1$ and
		$$
		(\II^{(0)}_1\II^{(1)}_r,\II^{(0)}_2\II^{(1)}_r)=(\II^{(1)}_1,\II^{(1)}_2)\overline{L}^{(01)}_{1,2;r}.
		$$
		So $\overline{L}^{(01)}_{1,2;r}$ is invertible and
		$$
		\theta^{(1)}(1)=\overline{L}^{(01)}_{1,2;r}\theta^{(0)}(1)\left(\overline{L}^{(01)}_{1,2;r}\right)^{-1}\in GL_2(\kk).
		$$
	 Similar to the invertibility of $\varphi^{(0)}(1)$, one obtains that $\varphi^{(1)}(1)\in GL_2(\kk)$.
		
		Similarly, if $\theta^{(1)}(1)\in GL_2(\kk)$, then $\varphi^{(1)}(1)\in GL_2(\kk)$, and 
		$$
		\theta^{(0)}(1)=\overline{L}^{(11)}_{1,2;r}\theta^{(1)}(1)\left(\overline{L}^{(11)}_{1,2;r}\right)^{-1}\in GL_2(\kk),
		$$
		where 
		$\overline{L}^{(11)}_{1,2;r}=\begin{pmatrix}
			l^{(11)}_{11r}&	l^{(11)}_{12r}\\
			l^{(11)}_{21r}&	l^{(11)}_{22r}
		\end{pmatrix}$ is invertible for $r=1,2$.
	\end{proof}

	\begin{lemma}\label{lem: relation between gamma and theta(1) varphi(1)} Retain the assumptions of Lemma \ref{lem: properitis of theta}. Suppose $\theta^{(0)}(1)\in GL_2(\kk)$ (or $\theta^{(1)}(1)\in GL_2(\kk)$).
		\begin{enumerate}
			
			\item For any $i,i'\in \Z_2$ and $q,r,s=1,2$, then 
			$
			\sum_{p=1}^2l^{(ii')}_{pqr }\varphi^{(i+i')}_{ps}(1)=\sum_{j=1}^2 
			l^{(ii')}_{sjr}\varphi^{(i)}_{qj}(1).
			$

			\item For any $x\in E$ and $v=1,2$, then
			$
			\sum_{r,j=1}^2  \theta^{(0)}_{vj}\left(x\varphi^{(0)}_{rj}(1)\right)\gamma_r=x\gamma_v.
			$
			\item For any $i\in \Z_2$ and $q,s=1,2$, then
			$\sum_{t,j,u=1}^2 \gamma_ul^{(0i)}_{sjt}\varphi^{(0)}_{uj}(1)\theta^{(i)}_{tq}(1)=\delta_{sq}.$
		\end{enumerate}
		
	\end{lemma}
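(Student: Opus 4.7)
The plan is to derive all three parts by substituting convenient values of $x$ and $x'$ into Lemma \ref{lem: properitis of theta} and reducing with Lemma \ref{lem: relations for l and gamma} and the $t$-invertibility relations. Throughout, set $\Theta^{(i)}:=\theta^{(i)}(1)\in GL_2(\kk)$ and $\Phi^{(i)}:=\varphi^{(i)}(1)\in GL_2(\kk)$ (valid by Lemma \ref{lem: prperties of theta(1) and varphi(1)}). Evaluating the $t$-inverse condition of Definition \ref{def: inverse and t-inverse} at $1\in E$ yields the scalar identity $(\Theta^{(i)})^{T}\Phi^{(i)}=\Phi^{(i)}(\Theta^{(i)})^{T}=I_{2}$, which is the engine that drives every reduction below.

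For (1), substitute $x=x'=1$ in Lemma \ref{lem: properitis of theta}. Because $\Phi^{(i')}_{rj}\in\kk$, one has $\theta^{(i')}_{uj}(\Phi^{(i')}_{rj})=\Phi^{(i')}_{rj}\Theta^{(i')}_{uj}$, so the inner $j$-sum on the left collapses via $\sum_{j}\Phi^{(i')}_{rj}\Theta^{(i')}_{uj}=(\Phi^{(i')}(\Theta^{(i')})^{T})_{ru}=\delta_{ru}$, reducing the left side to $l^{(ii')}_{pqr}$. This gives $l^{(ii')}_{pqr}=\sum_{t,j}l^{(ii')}_{tjr}\Phi^{(i)}_{qj}\Theta^{(i+i')}_{pt}$. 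Now multiply by $\Phi^{(i+i')}_{ps}$ and sum over $p$; the $p$-sum $\sum_{p}\Theta^{(i+i')}_{pt}\Phi^{(i+i')}_{ps}=\delta_{ts}$ yields (1) at once.

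For (3), apply (1) with $(i,i')\mapsto(0,i)$ to rewrite $\sum_{j}l^{(0i)}_{sjt}\Phi^{(0)}_{uj}=\sum_{p}l^{(0i)}_{put}\Phi^{(i)}_{ps}$. Substituting into the left side of (3) and reordering, the $u$-sum $\sum_{u}\gamma_{u}l^{(0i)}_{put}=\delta_{pt}$ by Lemma \ref{lem: relations for l and gamma}(3), leaving $\sum_{t}\Phi^{(i)}_{ts}\Theta^{(i)}_{tq}=((\Phi^{(i)})^{T}\Theta^{(i)})_{sq}=\delta_{sq}$, which is exactly (3).

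The main obstacle is (2), which requires applying Lemma \ref{lem: properitis of theta} twice with complementary specializations. Set $d_{j}:=\sum_{r}\gamma_{r}\Phi^{(0)}_{rj}$ and $G_{t}(x):=\sum_{q}\gamma_{q}\varphi^{(0)}_{qt}(x)$; pulling the scalar $\Phi^{(0)}_{rj}$ outside $\theta^{(0)}_{vj}$ reduces the left side of (2) to $H_{v}(x):=\sum_{j}d_{j}\theta^{(0)}_{vj}(x)$. First, in Lemma \ref{lem: properitis of theta} with $i=i'=0$ and $x'=1$, contract with $\gamma_{r}$ over $r$ (using Lemma \ref{lem: relations for l and gamma}(2): $\sum_{r}\gamma_{r}l^{(00)}_{tjr}=\delta_{tj}$) and then with $\gamma_{q}$ over $q$ (using Lemma \ref{lem: relations for l and gamma}(3): $\sum_{q}\gamma_{q}l^{(00)}_{pqu}=\delta_{pu}$); this yields $H_{p}(x)=\sum_{t}\theta^{(0)}_{pt}(G_{t}(x))$. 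Second, in Lemma \ref{lem: properitis of theta} with $i=i'=0$ and $x=1$, the same two contractions collapse the right side through $\sum_{t}\Phi^{(0)}_{qt}\Theta^{(0)}_{pt}=\delta_{pq}$ and give $\sum_{j}\theta^{(0)}_{pj}(G_{j}(x))=\gamma_{p}x$. Comparing the two final identities yields $H_{p}(x)=\gamma_{p}x$, i.e., (2) with $v=p$. The key subtlety is arranging the pair of specializations so that both reductions produce the common auxiliary expression $\sum_{t}\theta^{(0)}_{pt}(G_{t}(x))$; without this pairing, the $\theta^{(0)}$--$\varphi^{(0)}$ composition on the right of Step A cannot be eliminated by the $t$-inverse relations alone.
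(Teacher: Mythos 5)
Your proof is correct. For parts (1) and (3) it relies on essentially the same devices as the paper: specialize Lemma \ref{lem: properitis of theta} at $x=x'=1$, note that the $t$-inverse condition evaluated at $1$ gives the scalar matrix identities $(\Theta^{(i)})^{T}\Phi^{(i)}=\Phi^{(i)}(\Theta^{(i)})^{T}=I_2$, and contract with $\Phi^{(i+i')}$ (for (1)) or with the $\gamma$'s (for (3)). Your substitution $x=x'=1$ followed by multiplying by $\Phi^{(i+i')}_{ps}$ and summing over $p$ is equivalent to the paper's direct substitution $x'=\varphi^{(i+i')}_{ps}(1)$, since $x'$ enters Lemma \ref{lem: properitis of theta} linearly once it lies in $\kk$.

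For part (2), however, your argument is noticeably longer than necessary, and in fact longer than the paper's. After the $\gamma_r$-contraction in your Step A, the right-hand side already is $\sum_j\theta^{(0)}_{pj}(\varphi^{(0)}_{qj}(x))$, and by the $t$-inverse relation of Definition \ref{def: inverse and t-inverse} (namely $\sum_j\theta^{(0)}_{pj}\varphi^{(0)}_{qj}=\delta_{pq}\id_E$), this collapses immediately to $\delta_{pq}x$. One more $\gamma_q$-contraction, using Lemma \ref{lem: relations for l and gamma}(3), then gives $H_p(x)=\gamma_p x$ directly. Your Step B, together with the auxiliary quantities $d_j$ and $G_t$, is redundant: it re-derives, via a second pass through Lemma \ref{lem: properitis of theta}, the identity $\sum_j\theta^{(0)}_{pj}(G_j(x))=\gamma_p x$ that you would already have had for free. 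The paper's proof of (2) keeps $i$ general (so Lemma \ref{lem: relations for l and gamma}(3) does not directly apply) and therefore inverts the matrix $L^{(i0)}_q$ by hand rather than contracting with $\gamma_q$; but it likewise applies Lemma \ref{lem: properitis of theta} only once, precisely because it exploits the $t$-inverse identity to kill the composition $\theta^{(i)}_{pt}\circ\varphi^{(i)}_{qj}$ at the first opportunity. Your remark that ``the $\theta^{(0)}$--$\varphi^{(0)}$ composition on the right of Step A cannot be eliminated by the $t$-inverse relations alone'' is therefore incorrect: the $t$-inverse relation does exactly that.
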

	\begin{proof}
		(1) Taking $x=1, x'=\varphi^{(i+i')}_{ps}(1)$ in Lemma \ref{lem: properitis of theta} and by Lemma \ref{lem: prperties of theta(1) and varphi(1)}, the result follows.
		
		(2) Taking $i'=0$ and $x'=1$ in Lemma \ref{lem: properitis of theta}, one obtains that
		$$
		\sum_{u,j=1}^2 l^{(i0)}_{pqu }\theta^{(0)}_{uj}\left(x\varphi^{(0)}_{rj}(1)\right)=\sum_{t,j=1}^2 
		l^{(i0)}_{tjr}\theta^{(i)}_{pt}(\varphi^{(i)}_{qj}(x)),
		$$
		for any $p,q,r=1,2$. 
		By Lemma \ref{lem: relations for l and gamma}(2), we have
		$$
		\sum_{r,u,j=1}^2 l^{(i0)}_{pqu }\theta^{(0)}_{uj}\left(x\varphi^{(0)}_{rj}(1)\right)\gamma_r=\sum_{r,t,j=1}^2 
		l^{(i0)}_{tjr}\theta^{(i)}_{pt}(\varphi^{(i)}_{qj}(x))\gamma_r=\sum_{j=1}^2 
		\theta^{(i)}_{pj}(\varphi^{(i)}_{qj}(x))=\delta_{pq}x,
		$$
		and then multiply the equation above by $g^{(i0)}_{vqp}$ which is the $(v,p)$-th element of $(L^{(i0)}_q)^{-1}$ for any $v,p,q=1,2$. The result follows.
		
		(3) By Lemma \ref{lem: relations for l and gamma}(3), Lemma \ref{lem: prperties of theta(1) and varphi(1)} and taking $i=0$ in (1), one obtains 
		\begin{equation*}\sum_{t,j,u=1}^2 \gamma_ul^{(0i)}_{sjt}\varphi^{(0)}_{uj}(1)\theta^{(i)}_{tq}(1)=
			\sum_{p,t,u=1}^2 \gamma_ul^{(0i)}_{put}\varphi^{(i)}_{ps}(1)\theta^{(i)}_{tq}(1)=\sum_{t=1}^2 \varphi^{(i)}_{ts}(1)\theta^{(i)}_{tq}(1)=\delta_{sq}.\qedhere
		\end{equation*}
	\end{proof}

  \begin{definition}\label{def: twisting system}
	Let $\theta^{(i)}=(\theta^{(i)}_{jj'}): E\to M_2(E)
		$ be a $t$-invertible linear map for $i\in\Z_2$ and $\II=\{\II^{(i)}_j\mid i\in\Z_2,j=1,2\}$ be an invertible $\Z_2$-graded basis of $M_2(\kk)$. The pair $\Theta=(\{\theta^{(i)}\}_{i\in\Z_2},\II)$ is called a \emph{twisting system} of $M_2(E)$ if
		\begin{enumerate}
			\item[(1)] $\theta^{(0)},\theta^{(1)}$ satisfy equivalent conditions of Proposition \ref{prop: equivalent condition for theta}, 

			\item[(2)]  	$
			\theta^{(1)}(1)\in GL_2(\kk) $ (or equivalently, $
			\theta^{(0)}(1)\in GL_2(\kk)
			$).
		\end{enumerate}

	\end{definition}

 	\begin{theorem}\label{thm: twisted matrix algebra}
		Let $\Theta=(\{\theta^{(i)}\}_{i\in\Z_2},\II)$ be a twisting system of $M_2(E)$. Then the $\Z_2$-graded vector space $M_2(E)$ with the multiplication as follows, for any $x,x'\in E, i,i'\in \Z_2$ and $j,j'=1,2$, 
		\begin{align*}
		\II^{(i)}_{j}x\star \II^{(i')}_{j'}x'=\sum_{s=1}^2\II^{(i)}_{j}\II^{(i')}_{s}\theta^{(i')}_{sj'}(x)x'=\sum_{s,t=1}^2\II^{(i+i')}_{t}l^{(ii')}_{tjs}\theta^{(i')}_{sj'}(x)x',
		\end{align*}
is a $\Z_2$-graded algebra with the identity $\sum_{j,s=1}^2\gamma_s(\theta^{(0)})^{-1}_{sj}(1)\II^{(0)}_j$, called this new algebra the \emph{twisted matrix algebra of $M_2(E)$ by $\Theta$}, denoted by ${^{\Theta}M_2(E)}$.
	\end{theorem}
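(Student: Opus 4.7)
The plan has three parts: first confirm that $\star$ respects the $\Z_2$-grading and is well-defined, then verify associativity, and finally check that the proposed element is a two-sided unit. Grading and well-definedness are immediate from the defining formula, since every summand of $\II^{(i)}_j x \star \II^{(i')}_{j'} x'$ carries the factor $\II^{(i+i')}_t$, and because $\{\II^{(i)}_1,\II^{(i)}_2\}$ is a basis of $M_2(\kk)_i$ every element of $M_2(E)_i$ admits a unique decomposition $\sum_j \II^{(i)}_j x_j$ with $x_j\in E$, so the formula extends $\kk$-bilinearly to all of $M_2(E)\otimes M_2(E)$.

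For associativity, I would expand both $(\II^{(i)}_j x \star \II^{(i')}_{j'} x') \star \II^{(i'')}_{j''} x''$ and $\II^{(i)}_j x \star (\II^{(i')}_{j'} x' \star \II^{(i'')}_{j''} x'')$ using the defining formula twice on each side. The left side produces
$$\sum_{s,t,u,v} \II^{(i+i'+i'')}_v\, l^{(i+i',i'')}_{vtu}\, l^{(ii')}_{tjs}\, \theta^{(i'')}_{uj''}\bigl(\theta^{(i')}_{sj'}(x)x'\bigr)\, x'',$$
while the right side produces
$$\sum_{s,t,u,v} \II^{(i+i'+i'')}_v\, l^{(i,i'+i'')}_{vjs}\, l^{(i'i'')}_{tj'u}\, \theta^{(i'+i'')}_{st}(x)\, \theta^{(i'')}_{uj''}(x')\, x''.$$
Comparing coefficients of $\II^{(i+i'+i'')}_v$ with $x''$ arbitrary, associativity reduces precisely to the identity in Proposition \ref{prop: equivalent condition for theta}(1), which is a defining hypothesis on the twisting system.

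For the unit, set $\varphi^{(0)} = (\theta^{(0)})^{-1}$. By Lemma \ref{lem: prperties of theta(1) and varphi(1)} each $\varphi^{(0)}_{sj}(1)\in\kk$, so $e = \sum_{j,s} \gamma_s\, \varphi^{(0)}_{sj}(1)\, \II^{(0)}_j$ lies in $M_2(\kk)\subseteq M_2(E)_0$ and I can freely commute these scalars past the $\II$'s and $\theta$'s. To check $e \star \II^{(i)}_j x = \II^{(i)}_j x$, I would expand and use $\kk$-linearity of $\theta^{(i)}_{sj}$ to isolate the coefficient of $\II^{(i)}_t$ as $\sum_{u,s,r}\gamma_r\, l^{(0i)}_{tus}\,\varphi^{(0)}_{ru}(1)\,\theta^{(i)}_{sj}(1)$, which equals $\delta_{tj}$ by Lemma \ref{lem: relation between gamma and theta(1) varphi(1)}(3). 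For $\II^{(i)}_j x \star e = \II^{(i)}_j x$, the same kind of expansion yields an inner sum $\sum_{r,u}\theta^{(0)}_{su}\bigl(x\,\varphi^{(0)}_{ru}(1)\bigr)\gamma_r$, which Lemma \ref{lem: relation between gamma and theta(1) varphi(1)}(2) simplifies to $x\gamma_s$, after which Lemma \ref{lem: relations for l and gamma}(2) collapses the remaining sum $\sum_s l^{(i0)}_{tjs}\gamma_s$ to $\delta_{tj}$, producing $\II^{(i)}_j x$.

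The main obstacle is bookkeeping rather than substance: both the associativity and the unit verifications are pure index manipulation over quadruple sums in the $l^{(ii')}_{tjs}$, $\gamma_s$, $\theta^{(i)}_{jj'}$, and $\varphi^{(0)}_{sj}(1)$. The entire chain of Lemmas \ref{lem: relations for l and gamma}--\ref{lem: relation between gamma and theta(1) varphi(1)} together with Proposition \ref{prop: equivalent condition for theta} has been engineered exactly to feed into these two reductions, so once the right lemma is invoked at each step the verification is mechanical; the delicate point is keeping the relabellings of indices between my expansions and the precise index conventions of the lemmas consistent.
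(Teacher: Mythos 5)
Your proposal is correct and follows the same route as the paper's proof: associativity is exactly a recoding of Proposition \ref{prop: equivalent condition for theta}(1) once both sides are expanded into quadruple sums, and the unit verification invokes Lemma \ref{lem: relation between gamma and theta(1) varphi(1)}(2), (3) and Lemma \ref{lem: relations for l and gamma}(2) just as the paper does (the paper states the associativity check more tersely as ``straightforward''). Your observation that $\varphi^{(0)}_{sj}(1)\in\kk$ via Lemma \ref{lem: prperties of theta(1) and varphi(1)}, which lets the scalars pass through $\theta^{(i)}_{uj}$, is left implicit in the paper but is indeed what the computation requires.
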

	\begin{proof}
		It is straightforward to check the multiplication $\star$ satisfies the associative law by Proposition \ref{prop: equivalent condition for theta}(1).
		
		Write $\varphi^{(0)}$ for the $t$-inverse of $\theta^{(0)}$ and  $\II^{(0)}=\sum_{j,s=1}^2\gamma_s\varphi^{(0)}_{sj}(1)\II^{(0)}_j$. 
		By Lemma \ref{lem: relation between gamma and theta(1) varphi(1)}(2) and Lemma \ref{lem: relations for l and gamma}(2), for any $x\in E$, $i\in \Z_2$ and $j=1,2$,
		\begin{align*}
			\II^{(i)}_jx\star \II^{(0)}=\sum_{s,t,j',u=1}^2\II^{(i)}_tl^{(i0)}_{tjs}\theta^{(0)}_{sj'}(x)\gamma_u\varphi^{(0)}_{uj'}(1)=\sum_{s,t=1}^2\II^{(i)}_tl^{(i0)}_{tjs}\gamma_sx=\II^{(i)}_jx.
		\end{align*}
		On the other hand, by Lemma \ref{lem: relation between gamma and theta(1) varphi(1)}(3), for any  $x\in E$, $i\in \Z_2$ and $j=1,2$,
		\begin{align*}
			\II^{(0)}\star \II^{(i)}_jx =\sum_{s,t,j',u=1}^2\II_t^{(i)}\gamma_u\varphi^{(0)}_{uj'}(1)l^{(0i)}_{tj's}\theta^{(i)}_{sj}(1)x=\II^{(i)}_jx.
		\end{align*}
		Hence, $\II^{(0)}$ is the identity.
	\end{proof}

The last theorem provides a twisted algebra structure on the matrix algebra $M_2(E)$, but the identity seems complicated. It is natural to find a twisting system such that the identity matrix is the identity of the associated twisted matrix algebra. We have an analogous result of \cite[Proposition 2.4]{Z}.

	\begin{theorem}\label{thm: twisted matrix with identity I_2}
		Let $\Theta=(\{\theta^{(i)}\}_{i\in\Z_2},\II)$   be a twisting system of $M_2(E)$. Let $\Upsilon=(\upsilon^{(0)},\upsilon^{(1)})$ be a pair of linear maps from $E$ to $M_2(E)$ satisfying
		\begin{align*}
			\upsilon^{(i)}(x)=\theta^{(i)}(x)(\varphi^{(i)}(1))^{T},	
		\end{align*}
		where $\varphi^{(i)}$ is the $t$-inverse of $\theta^{(i)}$ for any $i\in\Z_2$ and $x\in E$. Then
		\begin{enumerate}
			\item  $\upsilon ^{(i)}$ is $t$-invertible with the $t$-inverse map $E$ from $M_2(E)$ mapping $x$ to 	$\varphi^{(i)}(x)(\theta^{(i)}(1))^{T}$ for any $i\in\Z_2$;
			\item  $\Upsilon=(\{\upsilon^{(i)}\}_{i\in\Z_2},\II)$ is a twisting system of $M_2(E)$;
   \item the identity of the  twisted matrix algebra  ${^\Upsilon M_2(E)}$ is the identity matrix $I_2$;

   \item ${^\Theta M_2(E)}\cong {^\Upsilon M_2(E)}$ as $\Z_2$-graded algebras.
		\end{enumerate}
	\end{theorem}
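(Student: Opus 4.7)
The plan is to prove the four parts in sequence. The foundation is the observation that evaluating the two $t$-invertibility identities for $\theta^{(i)}$ at $1\in E$, and using linearity of $\varphi^{(i)}$ to pull the scalar $\theta^{(i)}_{kj}(1)\in\kk$ outside, yields the scalar matrix equation
\[
Q^{(i)}\bigl(P^{(i)}\bigr)^T=\bigl(P^{(i)}\bigr)^T Q^{(i)}=I_2,
\]
where $P^{(i)}:=\varphi^{(i)}(1)$ and $Q^{(i)}:=\theta^{(i)}(1)$; in particular $P^{(i)}$ and $Q^{(i)}$ are invertible scalar matrices and inverses of each other up to transpose.

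For part (1), I would view $\upsilon^{(i)}$ and the candidate $t$-inverse $\tau^{(i)}(x):=\varphi^{(i)}(x)(Q^{(i)})^T$ as matrices of linear maps composed with right multiplication by the constant scalar matrices $(P^{(i)})^T$ and $(Q^{(i)})^T$. The two $t$-invertibility equations for $\upsilon^{(i)}$ then telescope: for instance $(\tau^{(i)})^T\upsilon^{(i)}=Q^{(i)}(\varphi^{(i)})^T\theta^{(i)}(P^{(i)})^T=Q^{(i)}(P^{(i)})^T\cdot\id_E=I_2\cdot\id_E$, where the middle factor $(\varphi^{(i)})^T\theta^{(i)}=I_2\cdot\id_E$ is exactly the $t$-invertibility of $\theta^{(i)}$; the reverse composition works symmetrically.

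For part (2), condition (2) of Definition \ref{def: twisting system} follows immediately from $\upsilon^{(i)}(1)=Q^{(i)}(P^{(i)})^T=I_2\in GL_2(\kk)$; for condition (1) I would verify Proposition \ref{prop: equivalent condition for theta}(3) directly by substituting $\upsilon^{(i')}_{sj'}(x)=\sum_u\theta^{(i')}_{su}(x)\varphi^{(i')}_{j'u}(1)$ into both sides, pulling the scalars $\varphi^{(i')}_{j'u}(1)$ and $\varphi^{(i'')}_{j''v}(1)$ outside, and reducing to Proposition \ref{prop: equivalent condition for theta}(3) for the original $\theta$ together with Lemma \ref{lem: relation between gamma and theta(1) varphi(1)}(1). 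Part (3) is then immediate from part (1): Theorem \ref{thm: twisted matrix algebra} expresses the identity of ${^\Upsilon M_2(E)}$ in terms of the $t$-inverse of $\upsilon^{(0)}$ evaluated at $1$, which by (1) equals $P^{(0)}(Q^{(0)})^T=I_2$, collapsing the formula to $\sum_j\gamma_j\II^{(0)}_j=I_2$.

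For part (4), define $\Phi:{^\Theta M_2(E)}\to{^\Upsilon M_2(E)}$ by $\Phi(\II^{(i)}_jx):=\sum_k\theta^{(i)}_{kj}(1)\II^{(i)}_kx$; invertibility of $Q^{(i)}$ makes $\Phi$ a bijection respecting the $\Z_2$-grading. Checking multiplicativity expands to a coefficient comparison in $\II^{(i+i')}_t$ that, after isolating the common factor $\theta^{(i')}_{sj'}(x)x'$, reduces to the scalar identity
\[
\sum_k\theta^{(i)}_{kj}(1)\,l^{(ii')}_{tks}=\sum_r l^{(ii')}_{rjs}\,\theta^{(i+i')}_{tr}(1),
\]
which I would extract from Proposition \ref{prop: equivalent condition for theta}(3) at $x=x'=1$ by cancelling a common factor $\theta^{(i')}_{uj''}(1)$ via the invertibility of $\theta^{(i')}(1)$ from Lemma \ref{lem: prperties of theta(1) and varphi(1)}; that $\Phi$ carries identity to identity is a short check using $P^{(0)}(Q^{(0)})^T=I_2$. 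The main technical obstacle is the Proposition \ref{prop: equivalent condition for theta}(3) verification in part (2), where the substitution produces several cross-indexed sums whose rearrangement requires careful bookkeeping of the $\II$-index, the $\theta$-matrix index, and the $\Z_2$-degree, and where Lemma \ref{lem: relation between gamma and theta(1) varphi(1)}(1) must be applied at just the right moment so that the constants $\varphi^{(i)}(1)$ match on both sides; everything else reduces to short manipulations of scalar $2\times 2$ matrices.
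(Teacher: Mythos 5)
Your proposal is correct, and parts (1), (2), (3) follow the paper's argument: (1) is a direct matrix-of-maps computation (the paper simply calls it straightforward), (2) uses $\upsilon^{(i)}(1)=I_2$ and reduces the twisting-system identity for $\upsilon$ to that for $\theta$ via Lemma~\ref{lem: relation between gamma and theta(1) varphi(1)}(1), and (3) collapses the formula for the unit from Theorem~\ref{thm: twisted matrix algebra} using $\varphi^{(0)}(1)(\theta^{(0)}(1))^{T}=I_2$. For part (4) you define the same pair of inverse maps $F,G$ as the paper, but your verification that $F$ is multiplicative is genuinely different and cleaner. The paper runs an eight-step chain of equalities invoking Lemma~\ref{lem: properitis of theta} twice and Proposition~\ref{prop: equivalent condition for theta}(3). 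You instead observe that the inner $q$-sum $\sum_q\theta^{(i')}_{qj'}(1)\varphi^{(i')}_{qu}(1)=\delta_{j'u}$ (the $t$-invertibility relation for $\theta^{(i')}$ evaluated at $1$, using $\theta^{(i')}_{qj'}(1)\in\kk$) collapses $\upsilon^{(i')}_{sq}$ directly back to $\theta^{(i')}_{sj'}$, isolating the common factor $\II^{(i+i')}_t\theta^{(i')}_{sj'}(x)x'$ on both sides; the problem then reduces to the scalar identity $\sum_p\theta^{(i)}_{pj}(1)\,l^{(ii')}_{tps}=\sum_r l^{(ii')}_{rjs}\,\theta^{(i+i')}_{tr}(1)$, which you correctly extract from Proposition~\ref{prop: equivalent condition for theta}(3) at $x=x'=1$ by cancelling the invertible scalar matrix $(\theta^{(i')}(1))^{T}$. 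This avoids Lemma~\ref{lem: properitis of theta} entirely and replaces the paper's long telescoping chain by a single $2\times 2$ scalar-matrix identity, so it is a simpler route to the same conclusion.
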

	\begin{proof}
		(1) It is straightforward. 
		
		(2) It is clear that $\upsilon^{(i)}(1)=\theta^{(i)}(1)(\varphi^{(i)}(1))^T=I_2\in GL_2(\kk)$ for any $i\in \Z_2$ by Lemma \ref{lem: prperties of theta(1) and varphi(1)}. For any $x,x'\in E$, $i,i',i''\in\Z_2$, and $j,j',j''=1,2$,
		\begin{align*}
			\sum_{s,t,u=1}^2 l^{(i+i',i'')}_{vtu}l^{(ii')}_{tjs}\upsilon^{(i'')}_{uj''}\left(\upsilon^{(i')}_{sj'}(x)x'\right)&=\sum_{s,t,u,p,q=1}^2 l^{(i+i',i'')}_{vtu}l^{(ii')}_{tjs}\theta^{(i'')}_{up}\left(\theta^{(i')}_{sq}(x)x'\right)\varphi_{j'q}^{(i')}(1)\varphi_{j''p}^{(i'')}(1)\\
			&=\sum_{s,t,u,p,q=1}^2 l^{(i,i'+i'')}_{vjs}
			l^{(i'i'')}_{tqu}\theta^{(i'+i'')}_{st}(x)\theta^{(i'')}_{up}(x')\varphi_{j'q}^{(i')}(1)\varphi_{j''p}^{(i'')}(1)\\
			&=\sum_{s,t,u,p,q=1}^2 l^{(i,i'+i'')}_{vjs}
			l^{(i'i'')}_{qj'u}\theta^{(i'+i'')}_{st}(x)\theta^{(i'')}_{up}(x')\varphi_{qt}^{(i'+i'')}(1)\varphi_{j''p}^{(i'')}(1)\\
			&=\sum_{s,u,q=1}^2 l^{(i,i'+i'')}_{vjs}
			l^{(i'i'')}_{qj'u}\upsilon^{(i'+i'')}_{sq}(x)\upsilon^{(i'')}_{uj''}(x'),
		\end{align*}
		where the second equality follows by the Proposition \ref{prop: equivalent condition for theta}(1) and the third equality follows by Lemma \ref{lem: relation between gamma and theta(1) varphi(1)}(1).
		
		(3)	By Theorem \ref{thm: twisted matrix algebra}, the identity of  ${^\Upsilon M_2(E)}$ is $$\sum_{j,s,t=1}^2\gamma_s\varphi^{(0)}_{st}(1)\theta^{(0)}_{jt}(1)\II^{(0)}_j=\sum_{j,s,t=1}^2\gamma_s\theta^{(0)}_{jt}(\varphi^{(0)}_{st}(1))\II^{(0)}_j=\sum_{j=1}^2\gamma_j\II^{(0)}_j=I_2.$$
			
	(4)		Define two $\Z_{2}$-graded linear maps:
			$$
			\begin{array}{cclc}
				F: & {^\Theta M_2(E)} &\to&{^\Upsilon M_2(E)}\\
				~ &\II^{(i)}_{j}x&\mapsto &\sum_{s=1}^2\II^{(i)}_{s}x\theta^{(i)}_{sj}(1),
			\end{array}
			\qquad
			\begin{array}{cclc}
				G: & {^\Upsilon M_2(E)} &\to&{^\Theta M_2(E)}\\
				~ &\II^{(i)}_{j}x&\mapsto &\sum_{s=1}^2\II^{(i)}_{s}x\varphi^{(i)}_{js}(1).
			\end{array}
			$$
			It is straightforward to verify that $F$ and $G$ are the inverse to each other. Moreover, $F$ is an algebra homomorphism, since for any $i,i'\in\Z_2$ and $j=1,2$
			\begin{align*}
				F\left(\II^{(i)}_{j}x\right)\star F\left(\II^{(i')}_{j'}x'\right)&=\sum_{p=1}^2\II^{(i)}_{p}x\theta^{(i)}_{pj}(1) \star \sum_{q=1}^2\II^{(i')}_{q}x'\theta^{(i')}_{qj'}(1)\\
				&=\sum_{p,q,s,t=1}^2\II^{(i+i')}_{t}l^{(ii')}_{tps}\upsilon^{(i')}_{sq}(x\theta^{(i)}_{pj}(1))\theta^{(i')}_{qj'}(1)x'   \\
				&=\sum_{p,q,s,t,u=1}^2\II^{(i+i')}_{t}\theta^{(i)}_{pj}(1) \theta^{(i')}_{qj'}(1) l^{(ii')}_{tps}\theta^{(i')}_{su}(x\varphi^{(i')}_{qu}(1))x'\\
				&=\sum_{p,t=1}^2\II^{(i+i')}_{t}\theta^{(i)}_{pj}(1)\left(\sum_{q,v,w=1}^2l^{(ii')}_{wvq}\theta^{(i+i')}_{tw}(\varphi^{(i)}_{pv}(x))\theta^{(i')}_{qj'}(1)\right)x'\\
				&=\sum_{p,t=1}^2\II^{(i+i')}_{t}\theta^{(i)}_{pj}(1)\left(\sum_{q,u,j''=1}^2l^{(ii')}_{tpu}\theta^{(i')}_{uj''}\varphi^{(i')}_{qj''}(x)(\theta^{(i')}_{qj'}(1))\right)x'\\
				&=\sum_{p,u,t=1}^2\II^{(i+i')}_{t}\theta^{(i)}_{pj}(1)l^{(ii')}_{tpu}\theta^{(i')}_{uj'}(x)x'\\
				&=\sum_{u,s,t=1}^2\II^{(i+i')}_{t}l^{(ii')}_{sju}\theta^{(i+i')}_{ts}(1)\theta^{(i')}_{uj'}(x)x'\\   
				&=F\left(\II^{(i)}_{j}x \star \II^{(i')}_{j'}x'\right),
			\end{align*}
			where the fourth and the fifth equalities are by Lemma \ref{lem: properitis of theta}, and the seventh equality is by Proposition \ref{prop: equivalent condition for theta}(c).
		\end{proof}

Before that, the twisting systems on matrix algebras depend on an invertible $\Z_2$-graded basis of $M_2(\kk)$. In fact, we can fix an invertible $\Z_2$-graded basis of $M_2(\kk)$ to obtain twisted matrix algebras.

\begin{theorem}\label{thm: twisted matrix algebra with a fixed basis}
Let $\Theta=(\{\theta^{(i)}\}_{i\in\Z_2},\II)$  be a twisting system of $M_2(E)$. Suppose $\JJ=\{\JJ^{(i)}_j\mid i\in\Z_2,j=1,2\}$ be an  invertible $\Z_2$-graded basis of $M_2(\kk)$. Then there exists a twisting system $\Omega=(\{\omega^{(i)}\}_{i\in\Z_2},\JJ)$ of $M_2(E)$  such that ${^\Theta M_2(E)}\cong {^\Omega M_2(E)}$ as $\Z_2$-graded algebras. 	
\end{theorem}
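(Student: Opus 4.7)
The plan is to transport the algebra ${^\Theta M_2(E)}$ along the linear change of basis $\II \leftrightarrow \JJ$ on $M_2(\kk)$, obtaining a new but isomorphic algebra structure on $M_2(E)$ that is of the form ${^\Omega M_2(E)}$ for an appropriate twisting system $\Omega$.

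Concretely, since $\II^{(i)}_1,\II^{(i)}_2$ and $\JJ^{(i)}_1,\JJ^{(i)}_2$ both span $M_2(\kk)_i$, there exist invertible matrices $T^{(i)}=(t^{(i)}_{sj})\in GL_2(\kk)$ with $\JJ^{(i)}_j=\sum_s t^{(i)}_{sj}\II^{(i)}_s$. Let $F: M_2(E)\to M_2(E)$ be the $\Z_2$-graded right-$E$-linear bijection determined by $F(\II^{(i)}_j x):=\JJ^{(i)}_j x$. Transporting the $\Theta$-multiplication through $F$ and expanding in the $\JJ$-basis yields, by $\kk$-linearity,
$$F(\II^{(i)}_j x\star \II^{(i')}_{j'} x')=\sum_{s,t}\JJ^{(i+i')}_t\,l^{(ii')}_{tjs}\,\theta^{(i')}_{sj'}(x)\,x'.$$
Comparing this with the required form $\sum_{s,t}\JJ^{(i+i')}_t\,m^{(ii')}_{tjs}\,\omega^{(i')}_{sj'}(x)\,x'$, where the $m^{(ii')}_{tjj'}$ are the $\JJ$-multiplication constants in $M_2(\kk)$ (computable from the $l^{(ii')}_{tjj'}$ together with the entries of $T^{(i)}$ and $(T^{(i+i')})^{-1}$), uniquely determines $\omega^{(i)}: E\to M_2(E)$ as a ``$T$-conjugate'' of $\theta^{(i)}$; in particular $\omega^{(i)}=\theta^{(i)}$ when $\JJ=\II$.

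It remains to check that $\Omega=(\{\omega^{(i)}\}_{i\in\Z_2},\JJ)$ is a twisting system in the sense of Definition \ref{def: twisting system}. The $t$-invertibility of $\omega^{(i)}$ and the condition $\omega^{(1)}(1)\in GL_2(\kk)$ are inherited from $\theta^{(i)}$ via the same basis-change recipe (the $t$-inverse of $\omega^{(i)}$ is built from $\varphi^{(i)}$ by the analogous formula). The cocycle compatibility of Proposition \ref{prop: equivalent condition for theta}(3) for $\omega$ is equivalent to associativity of the transported multiplication, which holds because $F$ is a bijection intertwining the two products and $\star$ is associative. By construction, $F$ is then a $\Z_2$-graded algebra isomorphism ${^\Theta M_2(E)}\cong{^\Omega M_2(E)}$. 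The main technical work is extracting $\omega^{(i)}$ explicitly by inverting the linear system produced by the comparison above; this bookkeeping parallels (but is more intricate than) the corresponding computation in the proof of Theorem \ref{thm: twisted matrix with identity I_2}, which handles the special case $\JJ=\II$ with a right-rescaling of $\theta$.
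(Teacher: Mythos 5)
Your plan is built around the map $F(\II^{(i)}_j x)=\JJ^{(i)}_j x$, and this choice is where the argument breaks. Writing $M^{(ii')}_j$ for the structure matrices of the basis $\JJ$ in analogy with \eqref{eq: symbol of L}, the requirement that $F$ intertwine the $\Theta$-product on the $\II$-side with an $\Omega$-product on the $\JJ$-side is (after setting $x'=1$ and rewriting in matrix form)
$$
M^{(ii')}_j\,\omega^{(i')}(x)=L^{(ii')}_j\,\theta^{(i')}(x)\qquad\text{for all }i,i'\in\Z_2,\ j=1,2,\ x\in E,
$$
i.e.\ $\omega^{(i')}(x)=(M^{(ii')}_j)^{-1}L^{(ii')}_j\,\theta^{(i')}(x)$. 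Since $\omega^{(i')}$ must depend only on $i'$, you need $(M^{(ii')}_j)^{-1}L^{(ii')}_j$ to be independent of both $i$ and $j$, and this fails in general. Concretely, take
$\JJ^{(0)}_1=I_2$, $\JJ^{(0)}_2=\mathrm{diag}(1,-1)$, $\JJ^{(1)}_1=\bigl(\begin{smallmatrix}0&1\\1&0\end{smallmatrix}\bigr)$, $\JJ^{(1)}_2=\bigl(\begin{smallmatrix}0&1\\-1&0\end{smallmatrix}\bigr)$
and $\II^{(0)}_1=\JJ^{(0)}_2$, $\II^{(0)}_2=\JJ^{(0)}_1$, $\II^{(1)}_j=\JJ^{(1)}_j$. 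Then a direct computation gives $(M^{(01)}_j)^{-1}L^{(01)}_j=\bigl(\begin{smallmatrix}0&1\\1&0\end{smallmatrix}\bigr)$ for $j=1,2$, while $(M^{(11)}_j)^{-1}L^{(11)}_j=\bigl(\begin{smallmatrix}0&-1\\-1&0\end{smallmatrix}\bigr)$ for $j=1,2$. The two constraints force $\theta^{(1)}\equiv 0$, contradicting $\theta^{(1)}(1)\in GL_2(\kk)$. So your $F$ cannot be made into an algebra isomorphism onto any ${^\Omega M_2(E)}$, and the "uniquely determines $\omega^{(i)}$" step has no solution. This is not just bookkeeping to be deferred; it is an obstruction.

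The paper avoids this by using a different linear isomorphism: it takes the identity map of $M_2(E)$ (which, written in $\II$- versus $\JJ$-coordinates, is given by the transition matrices $U^{(0)},U^{(1)}$ with $(\II^{(i)}_1,\II^{(i)}_2)=(\JJ^{(i)}_1,\JJ^{(i)}_2)U^{(i)}$) and defines $\omega^{(i)}(x)=U^{(i)}\theta^{(i)}(x)(U^{(i)})^{-1}$, i.e.\ a two-sided conjugation rather than your implicit one-sided relation. With that choice the displayed constraint is automatically satisfied, and $\Omega$ is a twisting system; your intuition that $\omega^{(i)}$ should be a "$T$-conjugate" of $\theta^{(i)}$ is correct, but it only pairs with the identity change-of-coordinates map, not with $F$ as you defined it. To repair the proposal, replace $F$ by the identity on $M_2(E)$ and verify directly that $U^{(i)}\theta^{(i)}(\cdot)(U^{(i)})^{-1}$ satisfies Definition \ref{def: twisting system}.
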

\begin{proof}
There exist two invertible matrices $U^{(0)}$ and $U^{(1)}$ of $M_2(\kk)$ such that 
$$
(\II^{(i)}_1,\II^{(i)}_2)=(\JJ^{(i)}_1,\JJ^{(i)}_2)U^{(i)},
$$
for any $i\in\Z_2$. Write $\Omega=(\omega^{(0)},\omega^{(1)})$, where
$$
\begin{array}{cclc}
    \omega^{(i)}:& E&\to &M_2(E)  \\
     ~& x&\mapsto &U^{(i)}\theta^{(i)}(x)(U^{(i)})^{-1},
\end{array}
$$
for any $i\in\Z_2$. It is straightforward to check that $\Omega=(\{\omega^{(i)}\}_{i\in\Z_2},\JJ)$ is a twisting system of $M_2(E)$ by Definition \ref{def: twisting system}.

It is also not hard to obtain that the isomorphism between ${^\Theta M_2(E)}$ and ${^\Omega M_2(E)}$ as $\Z_2$-graded vector spaces defined by $U^{(0)}$ and $U^{(1)}$ is an algebra isomorphism.
\end{proof}

The following result follows by Theorem \ref{thm: twisted matrix with identity I_2} and Theorem \ref{thm: twisted matrix algebra with a fixed basis} immediately.

\begin{corollary}
Let $\Theta=(\{\theta^{(i)}\}_{i\in\Z_2},\II)$  be a twisting system of $M_2(E)$. Suppose $\JJ=\{\JJ^{(i)}_j\mid i\in\Z_2,j=1,2\}$ be an  invertible $\Z_2$-graded basis of $M_2(\kk)$. Then there exists a twisting system $\Omega=(\{\omega^{(i)}\}_{i\in\Z_2},\JJ)$ of $M_2(E)$ such that ${^\Theta M_2(E)}\cong {^\Omega M_2(E)}$ as $\Z_2$-graded algebras and the identity of ${^\Omega M_2(E)}$ is $I_2$.
\end{corollary}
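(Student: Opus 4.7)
The plan is to realize the corollary as a direct two-step composition of the two theorems immediately preceding it, without reopening any of the calculations done in their proofs. The only content to verify is that the two constructions can be chained in the correct order so that both required features, namely the prescribed basis $\JJ$ and the identity $I_2$, are delivered by the final twisting system.

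First, I would apply Theorem~\ref{thm: twisted matrix algebra with a fixed basis} to the given twisting system $\Theta=(\{\theta^{(i)}\}_{i\in\Z_2},\II)$, producing a twisting system $\Omega'=(\{\omega'^{(i)}\}_{i\in\Z_2},\JJ)$ based on the prescribed basis $\JJ$ together with a $\Z_2$-graded algebra isomorphism $F_1\colon {^\Theta M_2(E)}\xrightarrow{\sim}{^{\Omega'}M_2(E)}$. Explicitly, if $U^{(i)}\in GL_2(\kk)$ are the change-of-basis matrices with $(\II^{(i)}_1,\II^{(i)}_2)=(\JJ^{(i)}_1,\JJ^{(i)}_2)U^{(i)}$, then $\omega'^{(i)}(x)=U^{(i)}\theta^{(i)}(x)(U^{(i)})^{-1}$; the $t$-invertibility, the compatibility conditions of Proposition~\ref{prop: equivalent condition for theta}, and the condition $\omega'^{(i)}(1)\in GL_2(\kk)$ are all preserved since they are stable under conjugation by a scalar invertible matrix.

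Second, I would apply Theorem~\ref{thm: twisted matrix with identity I_2} to $\Omega'$ to obtain its normalized counterpart $\Omega=(\{\omega^{(i)}\}_{i\in\Z_2},\JJ)$ on the \emph{same} basis $\JJ$, defined by $\omega^{(i)}(x)=\omega'^{(i)}(x)\bigl(\varphi'^{(i)}(1)\bigr)^{T}$, where $\varphi'^{(i)}$ is the $t$-inverse of $\omega'^{(i)}$. Parts~(1)--(3) of Theorem~\ref{thm: twisted matrix with identity I_2} give that $\Omega$ is a twisting system of $M_2(E)$ with basis $\JJ$ and that the identity of ${^\Omega M_2(E)}$ is $I_2$, and part~(4) provides a $\Z_2$-graded algebra isomorphism $F_2\colon {^{\Omega'}M_2(E)}\xrightarrow{\sim}{^\Omega M_2(E)}$. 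Composing, $F_2\circ F_1\colon {^\Theta M_2(E)}\xrightarrow{\sim}{^\Omega M_2(E)}$ is the desired $\Z_2$-graded algebra isomorphism, completing the proof.

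There is essentially no obstacle here; the only point requiring a moment of thought is the ordering of the two constructions. One has to apply the basis change first and the normalization to identity $I_2$ second, because Theorem~\ref{thm: twisted matrix algebra with a fixed basis} does not assert that it preserves the property that the identity of the twisted matrix algebra is $I_2$, whereas Theorem~\ref{thm: twisted matrix with identity I_2} is carried out on a fixed basis and can therefore be invoked last on $\Omega'$ to enforce the identity $I_2$ without disturbing the chosen basis $\JJ$.
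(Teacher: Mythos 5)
Your proposal is correct and coincides with the paper's (very brief) argument, which simply states that the corollary follows immediately from Theorem~\ref{thm: twisted matrix with identity I_2} and Theorem~\ref{thm: twisted matrix algebra with a fixed basis}; your write-up spells out the composition explicitly and is the intended proof. One small overstatement: you claim one \emph{has to} apply the basis change before the normalization, but in fact the reverse order also works, since the conjugation $\omega^{(i)}(x)=U^{(i)}\theta^{(i)}(x)(U^{(i)})^{-1}$ visibly sends $\theta^{(i)}(1)=I_2$ to $\omega^{(i)}(1)=I_2$, so the $I_2$-identity property would survive the basis change; your chosen order merely avoids having to make this extra observation, which is a perfectly reasonable choice but not a necessity.
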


This corollary implies that we can always fix an invertible $\Z_2$-graded basis of $M_2(\kk)$ and assume the identity of a twisted matrix algebra is $I_2$.

We turn to graded cases. Let $E$ be a $G$-graded algebra where $G$ is an abelian group. Suppose $\Theta=(\{\theta^{(i)}\}_{i\in\Z_2},\II)$ is a twisting system of $M_2(E)$ where each  $\theta^{(i)}_{jj'}$ is a $G$-graded linear map for any $i\in \Z_2$ and $j,j'=1,2$.  Then the twisted matrix algebra ${^{\Theta}M_2(E)}$ is a $\Z_2\times G$-graded algebra, where each component is
\begin{equation*}
    {^{\Theta}M_2(E)}_{(i,g)}=\left\{\II^{(i)}_{1}x\mid x\in E_g\right\}\oplus\left \{\II^{(i)}_{2}x\mid x\in E_g\right\},
\end{equation*}
for any $i\in \Z_2$ and $g\in G$.

\begin{remark}\label{rem: Z_2-grading of tiwsted matrix algebra} In particular, if $E$ is a $\Z_2$-graded algebra, then $\Z_2\times \Z_2$-graded algebra ${^{\Theta}M_2(E)}$ has the following two approaches to  becoming a $\Z_2$-graded algebra.
\begin{enumerate}
\item By taking the total degrees, ${^{\Theta}M_2(E)}$ becomes a $\Z_2$-graded algebra where
\begin{equation*}
	{^{\Theta}M_2(E)}_0=
	\begin{pmatrix}
		E_0 & E_1\\
		E_1 & E_0
	\end{pmatrix},\qquad
	{^{\Theta}M_2(E)}_1=
	\begin{pmatrix}
		E_1 & E_0\\
		E_0 & E_1
	\end{pmatrix}.
\end{equation*}

\item By forgetting the first degree, ${^{\Theta}M_2(E)}$ becomes a $\Z_2$-graded algebra where
\begin{equation*}
	{^{\Theta}M_2(E)}_0=
	\begin{pmatrix}
		E_0 & E_0\\
		E_0 & E_0
	\end{pmatrix},\qquad
	{^{\Theta}M_2(E)}_1=
	\begin{pmatrix}
		E_1 & E_1\\
		E_1 & E_1
	\end{pmatrix}.
\end{equation*}
\end{enumerate}
These are the only two good $\Z_2$-grading on $M_2(E)$ (see \cite{NV}).
\end{remark}

\begin{remark}
Let $E$ be a $\Z_2$-graded algebra. By taking the total degrees, ${^\Theta M_2(E)}$ and ${^{\Theta'} M_2(E)}$ become $\Z_2$-graded  for some twisting systems $\Theta$ and $\Theta'$ of $M_2(E)$. In general, $\gr_{\Z_2}{^\Theta M_2(E)}$ and $\gr_{\Z_2}{^{\Theta'} M_2(E)}$ are not equivalent and even not derived equivalent (see Example \ref{ex: +1case two setp Ore extensions}). It is different from the one of Zhang-twists (see \cite[Theorem 3.1]{Z}).
\end{remark}

\subsection{Twisted direct products} Let $E$ be an algebra and $\Theta=(\{\theta^{(i)}\}_{i\in\Z_2},\II)$  be a twisting system of $M_2(E)$. The $0$-th homogeneous space
$$
\begin{pmatrix}
	E & 0\\
	0 & E
\end{pmatrix}
$$
of $\Z_2$-graded algebra $
{^\Theta M_2(E)}
$ provides a subalgebra of ${^\Theta M_2(E)}$, 
which is isomorphic to $E\times E$ as vector spaces. It implies that there is a kind of twisted algebra structure on  $E\times E$ induced by the one of $M_2(E)$. It is analogous to Definition \ref{def: twisting system} and Theorem \ref{thm: twisted matrix algebra}, we just list them.

Let $\varepsilon_1=(a_1,a_2),\varepsilon_2=(b_1,b_2)$ be an invertible $\kk$-basis of $\kk\times \kk$ where $a_1a_2b_1b_2\neq0$, and $\{l_{p;jj'}\mid p,j,j'=1,2\}\subseteq \kk$ such that
$
\varepsilon_j\varepsilon_j'=\sum_{p=1}^2 \varepsilon_pl_{p;jj'}
$ for any $j,j'=1,2$.

\begin{definition}\label{def: twisting system of direct product}
	Let $\theta=(\theta_{jj'}):E\to M_2(E)$ be a $t$-invertible linear map and $\varepsilon=\{\varepsilon_1,\varepsilon_2\}$ be an invertible basis of $\kk\times \kk$. We say $\theta_\times=(\theta,\varepsilon)$ is a \emph{twisting system} of $E\times E$, if   
	$\theta(1)\in GL_2(\kk)$ and 
	\begin{equation*}
		\sum_{s,u=1}^2 l_{p;su }\theta_{uj'}\left(\theta_{sj}(x)x'\right)=\sum_{t,u=1}^2 
		l_{t;ju}\theta_{pt}(x)\theta_{uj'}(x'),
	\end{equation*}
	for any $x,x'\in E$ and $j,j',p=1,2.$
\end{definition}

\begin{corollary}\label{cor: twisted of E oplus E}
	Let $\theta_\times=(\theta,\varepsilon)$ be a twisting system of $E\times E$. Then the vector space $E\times E$ with the multiplication
\begin{flalign*}
&&\varepsilon_{j}x\star \varepsilon_{j'}x'=\sum_{s,t=1}^2\varepsilon_{t}l_{t;js}\theta_{sj'}(x)x',
\qquad\qquad\quad \forall x,x'\in E, j,j'=1,2,
\end{flalign*}
is an algebra, called the \emph{twisted direct product of $E\times E$ by $\theta$}, denoted by ${^{\theta_\times}(E\times E)}$.
\end{corollary}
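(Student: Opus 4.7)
My plan is to prove Corollary \ref{cor: twisted of E oplus E} by a direct verification of the algebra axioms, transporting the proof of Theorem \ref{thm: twisted matrix algebra} line by line to the simpler setting in which there is only a single twisting map $\theta$ and no $\Z_2$-grading to track. The remark ``it is analogous to Definition \ref{def: twisting system} and Theorem \ref{thm: twisted matrix algebra}'' confirms that this is the intended route.

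First I would record the preparatory identities for the structure constants of $\kk \times \kk$ in the basis $\varepsilon$. The associativity of $\kk \times \kk$ yields $\sum_p l_{r;pj''}\, l_{p;jj'} = \sum_q l_{r;jq}\, l_{q;j'j''}$, and writing $\gamma_1, \gamma_2 \in \kk$ so that $\gamma_1 \varepsilon_1 + \gamma_2 \varepsilon_2 = 1_{\kk \times \kk}$ (these exist by invertibility of $\varepsilon$), the unit axiom gives $\sum_t l_{s;jt}\, \gamma_t = \delta_{sj} = \sum_j l_{s;jt}\, \gamma_j$. These are the direct analogs of Lemma \ref{lem: relations for l and gamma}. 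Next I would verify associativity of $\star$: expanding $(\varepsilon_j x \star \varepsilon_{j'} x') \star \varepsilon_{j''} x''$ yields $\sum_{s,t,u,v} \varepsilon_v\, l_{v;tu}\, l_{t;js}\, \theta_{uj''}(\theta_{sj'}(x) x')\, x''$; rewriting $\sum_t l_{v;tu}\, l_{t;js} = \sum_q l_{v;jq}\, l_{q;su}$ and then applying the cocycle identity of Definition \ref{def: twisting system of direct product} to the inner sum $\sum_{s,u} l_{q;su}\, \theta_{uj''}(\theta_{sj'}(x) x')$ converts the expression into $\sum_{q,t,u,v} \varepsilon_v\, l_{v;jq}\, l_{t;j'u}\, \theta_{qt}(x)\, \theta_{uj''}(x')\, x''$, which is precisely the expansion of $\varepsilon_j x \star (\varepsilon_{j'} x' \star \varepsilon_{j''} x'')$.

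Finally I would exhibit the two-sided identity $\II_\times := \sum_{j,s} \gamma_s\, \varphi_{sj}(1)\, \varepsilon_j$, where $\varphi$ denotes the $t$-inverse of $\theta$. Since $\theta(1) \in GL_2(\kk)$ by Definition \ref{def: twisting system of direct product}, the single-map analog of Lemma \ref{lem: prperties of theta(1) and varphi(1)} gives $\varphi(1) \in GL_2(\kk)$, so that each $\varphi_{sj}(1)$ is a scalar and commutes freely through $\theta$. The verifications $\II_\times \star \varepsilon_k y = \varepsilon_k y = \varepsilon_k y \star \II_\times$ then parallel the final calculation in Theorem \ref{thm: twisted matrix algebra}, using the single-map analogs of Lemma \ref{lem: relation between gamma and theta(1) varphi(1)}(2) and (3) respectively. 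The main obstacle is purely notational bookkeeping: the triple summations must collapse in the correct order via the preparatory identities, and one must carefully track which factor lies inside $\theta$ versus on the outside as a multiplier in $E$. A conceptually cleaner but technically heavier alternative would be to embed $(a,b) \mapsto \mathrm{diag}(a,b)$ and realize ${^{\theta_\times}(E \times E)}$ as the ``diagonal'' subalgebra of some ${^\Theta M_2(E)}$ by lifting $\varepsilon_j$ to $\II^{(0)}_j := \mathrm{diag}(a_j, b_j)$ and extending $\theta$ to a full twisting system $\Theta$; however, this requires choosing a compatible auxiliary $\theta^{(1)}$, which is additional work, so the direct route is preferable.
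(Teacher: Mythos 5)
Your proposal is correct and follows exactly the route the paper intends: the paper states Corollary~\ref{cor: twisted of E oplus E} without written proof, saying only that the construction "is analogous to Definition~\ref{def: twisting system} and Theorem~\ref{thm: twisted matrix algebra}," and your line-by-line transport of the Theorem~\ref{thm: twisted matrix algebra} argument (the structure-constant identities, the cocycle manipulation for associativity, and the unit $\sum_{j,s}\gamma_s\varphi_{sj}(1)\varepsilon_j$ via the analogs of Lemma~\ref{lem: relation between gamma and theta(1) varphi(1)}(2),(3)) is precisely that analogy made explicit. You are also right that the "diagonal subalgebra of some ${^\Theta M_2(E)}$" route would require producing a compatible $\theta^{(1)}$ for every given $\theta$, so the direct verification is the cleaner reading of the paper's intent.
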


In particular, if $\theta:E\to M_2(E)$ is the diagonal map, then $\theta_\times=(\theta,\varepsilon)$ is always a twisting system of $E\times E$ for any invertible $\kk$-basis $\varepsilon$ and ${^{\theta_\times} (E\times E)}$ is just the usual direct product $E\times E$.

\begin{remark}
To distinguish from Zhang-twisting system, we add a subscript ``$\times$'' for $\theta$ to represent a twisting system of  a direct product.
\end{remark}

Let $E$ be a $G$-graded algebra where $G$ is an abelian group. Suppose $\theta_\times=(\theta,\varepsilon)$ is a twisting system of $E\times E$, where  each  $\theta_{jj'}$ is a $G$-graded linear map for any $j,j'=1,2$.  Then the twisted  direct product  ${^{\theta_\times} (E\times E)}$ is a $G$-graded algebra, where for any $g\in G$,
\begin{align*}
	{^{\theta_\times}(E\times E)}_g=E_g\oplus E_g.
\end{align*}

\section{Double Ore extensions}

In this section, we recall the definition of double Ore extensions in \cite{ZZ1} and show some basic properties of noncommutative quadric hypersurfaces defined by double Ore extensions.

\begin{definition}
We say $B$ is a \emph{$\Z$-graded right  double Ore extension} of a $\Z$-graded algebra $A$ if $B$ generated by $A$ and two homogeneous variables $y_1,y_2$  satisfying
\begin{enumerate}
\item a homogeneous relation $y_2y_1=p_{12}y_1y_2+p_{11}y^2_1+\tau_1y_1+\tau_2y_2+\tau_0$ for some $p_{12},p_{11}\in\kk$ and homogeneous elements $\tau_0,\tau_1,\tau_2\in A$,

\item $B$ is a $\Z$-graded free left $A$-modules with basis $\{y_1^iy_2^j\mid i,j\geq0\}$,

\item $y_1A+y_2A\subseteq Ay_1+A y_2+A$.
\end{enumerate}
\end{definition}
Similarly, one can define a \emph{$\Z$-graded left  double Ore extension}. An algebra $B$ is called a \emph{$\Z$-graded double Ore extension} of $A$ if it is a $\Z$-graded left and a $\Z$-graded right double Ore extension of $A$ with the same generating set $\{y_1, y_2\}.$

The condition (3) of the definition of $\Z$-graded right double Ore extension is equivalent to the existence of an algebra homomorphism $\sigma=(\sigma_{ij}):A\to M_2(A)$ and a $\sigma$-derivation $\delta=(\delta_i):A\to A^{\oplus2}$ where all $\sigma_{ij}$ and $\delta_i$ are $\Z$-graded linear transformations of $A$ for $i,j=1,2$, such that 
$$
y_ia=\sigma_{i1}(a)y_1+\sigma_{i2}(a)y_2+\delta_i(a),
$$
for any $a\in A$ and $i=1,2$. We write  $A_{P}[y_1,y_2;\sigma,\delta,\tau]$ for a $\Z$-graded right double Ore extension where $P=\{p_{12},p_{11}\}$ and $\tau=\{\tau_0,\tau_1,\tau_2\}$, and write $A_{P}[y_1,y_2;\sigma]$ for convenience if $\tau=\{0,0,0\}$ and $\delta=0$.

We will focus on the trimmed case $A_{P}[y_1,y_2;\sigma]$ in the sequel. By \cite[Lemma 1.10]{ZZ1}, $\sigma$ in the trimmed case $A_{P}[y_1,y_2;\sigma]$ satisfies  following conditions:
\begin{equation}\label{eq: conditio for sigma}
\begin{aligned}
	&\sigma_{21}\sigma_{11}=p_{12}\sigma_{11}\sigma_{21},\\
	&\sigma_{22}\sigma_{12}=p_{12}\sigma_{12}\sigma_{22},\\
&\sigma_{21}\sigma_{12}+p_{12}\sigma_{22}\sigma_{11}=p_{12}\sigma_{11}\sigma_{22}+p^2_{12}\sigma_{12}\sigma_{21}.
\end{aligned}
\end{equation}

There is a conjecture about the relation between the invertibility of $\sigma$  and
the condition a right double Ore extension being a double Ore extension (see \cite{ZZ1}). It is true in the connected graded case by \cite[Lemma 1.9 and Proposition 1.13]{ZZ1}.

\begin{lemma}\label{lem: invertible iff double ore extension}
Let $A$ be a locally finite connected $\Z$-graded algebra and $B=A_P[y_1,y_2;\sigma]$ be a connected $\Z$-graded right double Ore extension of $A$ where $p_{12}\neq0$. Then $B$ is  a $\Z$-graded  double Ore extension of $A$ if and  only if $\sigma$ is invertible.
\end{lemma}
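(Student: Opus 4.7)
The plan is to follow the outline of \cite[Lemma 1.9, Proposition 1.13]{ZZ1} while exploiting the locally finite connected graded structure to sidestep the set-theoretic subtleties present in the ungraded case.

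For the forward direction, assume $B$ is a $\Z$-graded double Ore extension. Then, in addition to the right-extension maps $\sigma=(\sigma_{ij})$ satisfying $y_ia=\sum_j \sigma_{ij}(a)y_j$ (trimmed case), there is a left-extension datum $\varphi=(\varphi_{ij}):A\to M_2(A)$ yielding $ay_j=\sum_i y_i\varphi_{ij}(a)$. I would compute $y_ia$ for a homogeneous $a\in A$ in two ways: first directly via $\sigma$, and then by expanding each $\sigma_{ik}(a)y_k$ using $\varphi$ to move it back to the left form, giving
\[
\sum_{k}\sigma_{ik}(a)y_k=y_ia=\sum_{j,k} y_j\,\varphi_{jk}(\sigma_{ik}(a)).
\]
Since $B$ is free as a left $A$-module on $\{y_1^py_2^q\}$, comparing coefficients of $y_1$ and $y_2$ forces $\sum_k \varphi_{jk}\sigma_{ik}=\delta_{ij}\id_A$. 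The symmetric computation starting from $ay_j$ yields $\sum_k \sigma_{ki}\varphi_{kj}=\delta_{ij}\id_A$, so $\sigma$ is invertible with inverse $\varphi$.

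For the converse, assume $\sigma$ is invertible with inverse $\varphi$. I would define a candidate left action of $A$ on $B$ by $a\cdot y_j:=\sum_i y_i\varphi_{ij}(a)$, extended to all monomials $y_1^py_2^q$ by iteration. Well-definedness amounts to checking compatibility with the defining quadratic relation $y_2y_1=p_{12}y_1y_2+p_{11}y_1^2$ and with the right-module relations already present in $B$. Both checks reduce, via the invertibility identities, to the constraints \eqref{eq: conditio for sigma} on $\sigma$ transported through $\varphi$. Once this structure is in place, $B$ is a left double Ore extension with datum $\varphi$; freeness of $B$ as a right $A$-module on $\{y_1^py_2^q\}$ follows by comparing Hilbert series of the two presentations, which agree because $A$ is locally finite connected graded and $B$ is freely generated in matching bidegrees over $A$.

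The main obstacle I anticipate is the compatibility of the candidate left action with the $y_2y_1$-relation: the identities one must verify are exactly the conjugates of \eqref{eq: conditio for sigma} by $\varphi$, together with the mixed relation $\sigma_{21}\sigma_{12}+p_{12}\sigma_{22}\sigma_{11}=p_{12}\sigma_{11}\sigma_{22}+p_{12}^2\sigma_{12}\sigma_{21}$, which do not automatically pass to $\varphi$ without using the hypothesis $p_{12}\neq 0$. The connected graded hypothesis is what makes the degree bookkeeping tractable: all checks can be reduced to the generating subspace $V=A_1$, on which $\sigma$ and $\varphi$ are honest matrices, and then extended to all of $A$ by induction on degree. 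In the ungraded case this reduction is unavailable, which is why the analogous statement in \cite{ZZ1} is only conjectural without the connected graded assumption.
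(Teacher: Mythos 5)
The paper supplies no proof of this lemma: it is stated as a direct consequence of \cite[Lemma 1.9 and Proposition 1.13]{ZZ1}, so your outline is a reconstruction of those cited arguments and, in substance, it is the right approach. One small correction in the forward direction: the identity $\sum_k \varphi_{jk}\sigma_{ik}=\delta_{ij}\id_A$ comes from writing $y_ia=\sum_j y_j\bigl(\sum_k\varphi_{jk}\sigma_{ik}(a)\bigr)$ and comparing coefficients of the $y_j$ with the trivial expression $y_ia=y_i\cdot a$, which requires $B$ to be free as a \emph{right} $A$-module on $\{y_1^py_2^q\}$ (the left double-Ore datum), not as a left $A$-module as you wrote; the roles of left and right freeness are interchanged between your two displayed identities. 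In the converse direction, the framing of ``defining a candidate left action and checking its well-definedness against the quadratic relation'' is unnecessary: $B$ is already an algebra, so the products $ay_j$ are determined, and what one verifies directly (from $y_ia=\sum_j\sigma_{ij}(a)y_j$ together with the invertibility identities) is that $ay_j=\sum_iy_i\varphi_{ij}(a)$ holds in $B$; after that, spanning of $\{y_1^py_2^q\}$ as a right $A$-module, obtained by running the quadratic relation backwards using $p_{12}\neq0$, plus the Hilbert-series comparison valid by local finiteness, upgrades spanning to freeness. These are presentational slips rather than gaps; the underlying argument is sound and matches the cited source.
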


Let $A=T(V)/(R)$ be a noetherian Koszul Artin-Schelter regular algebra and $B=A_P[y_1,y_2;\sigma]$ be a $\Z$-graded double Ore extension of $A$ with $p_{12}\neq 0$ and $\deg y_1=\deg y_2=1$. By \cite[Theorem 2.1]{ZVZ} (or, \cite[Proposition 2.10]{SZL}) and \cite[Theorem 0.2]{ZZ1}, $B$ is a Koszul Artin-Schelter regular algebra. Clearly, $J=k\langle y_1,y_2\rangle/(y_2y_1-p_{12}y_1y_2-p_{11}y_1^2)$ is a $\Z$-graded subalgebra of $B$ and a noetherian Koszul Artin-Schelter regular algebra. We identify elements of $J$ as the elements of $B$ and write $R_J$ for the vector space spanned by $y_2y_1-p_{12}y_1y_2-p_{11}y_1^2$ in the sequel.  The following result is clear.

\begin{lemma}\label{lem: properties for B!} ~
\begin{enumerate}
	\item $
	B^{!}=T(V^*\oplus \kk y^*_1\oplus\kk y^*_2)/(R^\perp\oplus R^\perp_J \oplus R_\tau),
	$
	where
	\begin{align*}
	R_J^\perp&=\text{Span}_{\kk}\{(y_2^*)^2,y_1^*y_2^*+p_{12}y_2^*y_1^*,(y_1^*)^2+p_{11}y_2^*y_1^*\},\\
	R_\tau&=\text{Span}_{\kk}\{v^*y_i^*+y_1^*\sigma_{1i}^*(v^*)+y_2^*\sigma_{2i}^*(v^*)\mid v^*\in V^*, i=1,2\};
	\end{align*}
\item $A^!$ and $J^!$ are $\Z$-graded subalgebras of $B^{!}$;
\item $B^{!}$ is a $\Z$-graded free $A^!$-module and a $\Z$-graded free $J^!$-module on both sides;
\item $B^{!}$ has a $\kk$-basis $\{e,y_1^*e,y_2^*e,y_1^*y_2^*e\mid e\in \mathcal{B}_A\}$ where $\mathcal{B}_A$ is a $\kk$-basis of $A^!$.
\end{enumerate}
\end{lemma}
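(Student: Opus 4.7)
The plan is to derive (1) by a direct Koszul duality computation, and then deduce (2)--(4) through a rewriting argument combined with a Hilbert series comparison. First I would write $B = T(W)/(R_B)$ with $W = V \oplus \kk y_1 \oplus \kk y_2$ and $R_B = R \oplus R_J \oplus R_\sigma$, where $R_\sigma$ is spanned by the cross relations $r_{i,a} = y_i \otimes a - \sum_{j=1}^2 \sigma_{ij}(a) \otimes y_j$ for $a \in V$ and $i = 1,2$. Setting $Y^* = \kk y_1^* \oplus \kk y_2^*$, the space $W^* \otimes W^*$ decomposes into $V^* \otimes V^*$, $Y^* \otimes Y^*$, and the mixed piece $V^* \otimes Y^* \oplus Y^* \otimes V^*$. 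Since $R \subseteq V \otimes V$ and $R_J \subseteq Y \otimes Y$ with $Y = \kk y_1 \oplus \kk y_2$, the pure components of $R_B^\perp$ are exactly $R^\perp$ and $R_J^\perp$. For the mixed component, pairing the candidate $v^* y_i^* + y_1^* \sigma_{1i}^*(v^*) + y_2^* \sigma_{2i}^*(v^*)$ against $r_{j,a}$ yields by direct evaluation $-v^*(\sigma_{ji}(a)) + \delta_{1j} v^*(\sigma_{1i}(a)) + \delta_{2j} v^*(\sigma_{2i}(a)) = 0$ for $j = 1, 2$, so these elements lie in $R_B^\perp$. A dimension count ($\dim R_\sigma = 2 \dim V$ inside a $4 \dim V$-dimensional mixed space forces its orthogonal complement to have dimension $2\dim V$, which matches $\dim R_\tau$) confirms that $R_\tau$ is precisely the mixed part, giving (1).

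For (4), the $R_\tau$ relations provide a rewriting rule $v^* y_i^* = -y_1^* \sigma_{1i}^*(v^*) - y_2^* \sigma_{2i}^*(v^*)$ that pushes any $y_i^*$ to the left past any $V^*$-element, while the $R_J^\perp$ relations (using $p_{12} \neq 0$) reduce arbitrary words in $y_1^*, y_2^*$ to the four-element set $\{1, y_1^*, y_2^*, y_1^* y_2^*\}$. Iterating these rewrites expresses every monomial of $B^!$ as a $\kk$-linear combination of elements $y_I^* e$ with $y_I^* \in \{1, y_1^*, y_2^*, y_1^* y_2^*\}$ and $e$ a word in $V^*$, and projecting $e$ onto $\mathcal{B}_A$ gives the spanning set in (4). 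Linear independence follows from a Hilbert series match: since $B$ is Koszul Artin-Schelter regular (as recalled from \cite{ZVZ, SZL, ZZ1}), one has $H_{B^!}(t) = H_B(-t)^{-1}$, and freeness of $B$ as a left $A$-module on $\{y_1^i y_2^j\}$ yields $H_B(t) = H_A(t)/(1-t)^2$; combining gives $H_{B^!}(t) = (1+t)^2 H_{A^!}(t)$, which agrees with the cardinality of the proposed basis degree by degree. Statement (2) then follows because the subalgebra of $B^!$ generated by $V^*$ (resp.\ by $y_1^*, y_2^*$) injects and has Hilbert series $H_{A^!}(t)$ (resp.\ $H_{J^!}(t)$), so is isomorphic to $A^!$ (resp.\ $J^!$). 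Statement (3) follows from the spanning set in (4) together with its right-handed analogue, obtained by solving the $R_\tau$ relations for $y_i^* v^*$ instead.

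The main obstacle is the pairing verification pinning down $R_\tau$ as exactly the mixed component of $R_B^\perp$; once this is settled, everything else is a Gr\"obner-style reduction against the Hilbert series of an iterated Ore-style extension, a pattern parallel to the familiar Koszul dualization of a classical Ore extension.
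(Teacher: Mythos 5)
The paper labels this lemma as "clear" and gives no proof, so there is no written argument to compare against; your write-up supplies the details. Your proof is correct, and the strategy is the natural one that the author presumably has in mind (indeed, the analogous PBW structure is re-used without comment in the proof of Lemma 3.4).

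A few small remarks. The dual-pairing verification is accurate under the standard (non-flipped) convention, which is the one the paper uses (it reproduces the exterior algebra relations when $p_{12}=1$, $p_{11}=0$). For your dimension count in (1) it is worth stating that the proposed generators of $R_\tau$ are linearly independent --- this is immediate since their $V^*\otimes Y^*$-components $v^*\otimes y_i^*$ already are. The rewriting argument for spanning in (4) is a terminating reduction but you do not need confluence, because linear independence is furnished separately by the Hilbert series identity $H_{B^!}(t)=(1+t)^2H_{A^!}(t)$; this is a clean way to sidestep a full Gr\"obner/Bergman diamond check. Finally, the "right-handed analogue" you invoke for the remaining half of (3) silently uses that $\sigma$ is invertible: one must dualize $\sum_k \varphi_{jk}\sigma_{ik}=\delta_{ij}\,\mathrm{id}$ to obtain $\sum_k \sigma^*_{ik}\varphi^*_{jk}=\delta_{ij}\,\mathrm{id}$ and then solve the $R_\tau$ relations to get $y_j^*w^*=-\sum_i\varphi^*_{ji}(w^*)\,y_i^*$. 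This invertibility is available here via Lemma~\ref{lem: invertible iff double ore extension} (using $p_{12}\neq 0$ and the double Ore extension hypothesis), so the argument goes through, but it would be worth flagging explicitly since it is the one step that does not follow from the left-handed basis alone.
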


\begin{lemma}\label{lem: rep for Clifford deformation of double ore extensions}
Retain the notations above. Suppose $B$ is noetherian and $z+h$ is a nonzero regular central element of $B$ for some $z\in A_2$ and $h\in J_2$.  Write $C_{A^!}(z)=T(V^*)/(\widetilde{R^\perp})$ and $C_{J^!}(h)=T(\kk y^*_1\oplus\kk y^*_2)/(\widetilde{R^\perp_J})$, where
$$
\widetilde{R^\perp}=\{f-f(\hat{z})\mid f\in R^\perp \},\qquad
\widetilde{R^\perp_J}=\{g-g(\hat{h})\mid g\in R^\perp_J \},
$$
and $\hat{z}\in V^{\otimes 2}$ and $\hat{h}\in (\kk y_1\oplus\kk y_2)^{\otimes2}$ are lifts of $z$ and $h$ respectively. Then 

\begin{enumerate}	
\item$
	C_{B^!}(z+h)\cong T(V^*\oplus \kk y^*_1\oplus\kk y^*_2)/(\widetilde{R^\perp}\oplus \widetilde{R^\perp_J} \oplus R_\tau)
	$ as $\Z_2$-graded algebras;
\item $C_{A^!}(z)$ and $C_{J^!}(h)$ are $\Z_2$-graded subalgebras of $C_{B^!}(z+h)$;
\item $C_{B^!}(z+h)$ is a  $\Z_2$-graded  free $C_{A^!}(z)$-module and a  $\Z_2$-graded free $C_{J^!}(h)$-module on both sides;

\item $\gldim_{\Z_2} C_{B^!}(z+h)\geq \max\{\gldim_{\Z_2} C_{A^!}(z),\gldim_{\Z_2} C_{J^!}(h)\}$.
\end{enumerate}
\end{lemma}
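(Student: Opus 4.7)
My plan is to reduce everything to the explicit presentation of $B^!$ from Lemma \ref{lem: properties for B!} together with a filtered-graded argument. Write $W = V \oplus \kk y_1 \oplus \kk y_2$, so that $B^! = T(W^*)/(R^\perp \oplus R^\perp_J \oplus R_\tau)$ and, by the definition of Clifford deformation, $C_{B^!}(z+h) = T(W^*)/\bigl(g - g(\hat z + \hat h) : g \in R^\perp \oplus R^\perp_J \oplus R_\tau\bigr)$. For (1), the key observation is a support check: $R^\perp \subseteq V^* \otimes V^*$ annihilates $\hat h \in (\kk y_1 \oplus \kk y_2)^{\otimes 2}$; $R^\perp_J \subseteq (\kk y_1^* \oplus \kk y_2^*)^{\otimes 2}$ annihilates $\hat z \in V^{\otimes 2}$; and $R_\tau$ lies in the mixed subspace $V^* \otimes (\kk y_1^* \oplus \kk y_2^*) \oplus (\kk y_1^* \oplus \kk y_2^*) \otimes V^*$, hence annihilates both. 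Decomposing any generator as $g = g_1 + g_2 + g_3$ accordingly yields $g(\hat z + \hat h) = g_1(\hat z) + g_2(\hat h)$, so the defining relations split as $\widetilde{R^\perp} \oplus \widetilde{R^\perp_J} \oplus R_\tau$, and (1) follows.

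For (2) and (3), the natural inclusions $V^* \hookrightarrow W^*$ and $\kk y_1^* \oplus \kk y_2^* \hookrightarrow W^*$ induce algebra homomorphisms $C_{A^!}(z) \to C_{B^!}(z+h)$ and $C_{J^!}(h) \to C_{B^!}(z+h)$, because their defining relations appear intact in the presentation from (1). I would equip $T(W^*)$ with the $\Z_{\geq 0}$-filtration by tensor degree; since the Clifford relations lie in filtration degree $\leq 2$, this descends to filtrations on $C_{B^!}(z+h)$, $C_{A^!}(z)$, and $C_{J^!}(h)$ whose associated gradeds are $B^!$, $A^!$, and $J^!$ respectively, and on which the induced comparison maps recover the inclusions of Lemma \ref{lem: properties for B!}(2). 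A standard filtered-graded argument (exhaustive, separated filtrations) then lifts the injectivity, proving (2). For (3), Lemma \ref{lem: properties for B!}(3)(4) identifies $B^!$ as a free $A^!$-module with basis $\{1, y_1^*, y_2^*, y_1^* y_2^*\}$ and as a free $J^!$-module on a lift of $\mathcal{B}_A$, on both sides; the analogous multiplication maps for the Clifford deformations are filtered, with associated graded the $B^!$-level isomorphisms, so the same filtered-graded principle upgrades them to bimodule isomorphisms.

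For (4), since $C_{B^!}(z+h)$ is free, hence faithfully flat, as both a left and a right $C_{A^!}(z)$-module by (3), induction and coinduction give a natural isomorphism
\[
\uExt^n_{C_{A^!}(z)}(M, N) \cong \uExt^n_{C_{B^!}(z+h)}\bigl(M \otimes_{C_{A^!}(z)} C_{B^!}(z+h),\ \uHom_{C_{A^!}(z)}(C_{B^!}(z+h), N)\bigr)
\]
for all $\Z_2$-graded $C_{A^!}(z)$-modules $M, N$ and all $n \geq 0$. Hence nonvanishing of $\uExt^n$ over $C_{A^!}(z)$ forces nonvanishing over $C_{B^!}(z+h)$, giving $\gldim_{\Z_2} C_{A^!}(z) \leq \gldim_{\Z_2} C_{B^!}(z+h)$, and the identical argument on the $C_{J^!}(h)$ side completes (4). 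The main subtlety throughout is tracking the $\Z_2$-grading under the tensor-degree filtration, but the Clifford relations are $\Z_2$-homogeneous of degree $0$, so the filtration is $\Z_2$-graded and every step carries over verbatim in the $\Z_2$-graded category.
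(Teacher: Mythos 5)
Your treatment of parts (1)--(3) is sound and runs parallel to the paper's: the support check in (1) is exactly the content of ``follows by the definition of Clifford deformations and Lemma~\ref{lem: properties for B!}''; the filtered--graded arguments for (2) and (3) are the usual way of saying that Clifford deformations are PBW deformations of $A^!$, $J^!$, $B^!$, which is the phrase the paper uses when it appeals to ``the same forms of $\kk$-bases'' and Lemma~\ref{lem: properties for B!}(3)(4).

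Part (4) has a genuine gap. The displayed isomorphism
\[
\uExt^n_{C_{A^!}(z)}(M, N) \cong \uExt^n_{C_{B^!}(z+h)}\bigl(M \otimes_{C_{A^!}(z)} C_{B^!}(z+h),\ \uHom_{C_{A^!}(z)}(C_{B^!}(z+h), N)\bigr)
\]
is false. Writing $R=C_{A^!}(z)$ and $S=C_{B^!}(z+h)$, the induction adjunction (with $-\otimes_R S$ exact since ${}_RS$ is free) gives
\[
\uExt^n_S(M\otimes_R S,\ \uHom_R(S,N))\ \cong\ \uExt^n_R\bigl(M,\ \mathrm{Res}\,\uHom_R(S,N)\bigr),
\]
and $\mathrm{Res}\,\uHom_R(S,N)$ is not $N$: already for $M=N=R$ and $n=0$ the left side is $R$ while the right side is $\uHom_R(S,R)$, which has $\kk$-dimension $(\mathrm{rank}_R S)\cdot\dim_\kk R > \dim_\kk R$. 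Moreover, freeness/faithful flatness on both sides is not by itself enough to conclude $\gldim R\le\gldim S$; the inclusion $\kk[x]/(x^2)\hookrightarrow M_2(\kk)$ (via $a+bx\mapsto aI+bE_{12}$) makes $M_2(\kk)$ free of rank~$2$ on both sides over $\kk[x]/(x^2)$, yet $\gldim M_2(\kk)=0<\infty=\gldim \kk[x]/(x^2)$. What one actually needs, and what the paper invokes via (a $\Z_2$-graded version of) \cite[Theorem 7.2.6]{MR}, is in addition that $R$ is a direct summand of $S$ as an $(R,R)$-bimodule. With that in hand the correct argument is the standard one: for a $\Z_2$-graded right $R$-module $M$, the $S$-module $M\otimes_R S$ has $\mathrm{pd}_S\le\gldim S$; projective $S$-modules restrict to projective $R$-modules because $S_R$ is free; hence $\mathrm{pd}_R(M\otimes_R S|_R)\le\gldim S$; and $M$ is a right $R$-module direct summand of $M\otimes_R S|_R$ because $R$ is a bimodule summand of $S$ (so $M\otimes_R S\cong M\oplus(M\otimes_R C)$ for the complementary subbimodule $C$), giving $\mathrm{pd}_R M\le\gldim S$. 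You should either carry out that argument (verifying the bimodule-summand hypothesis from the explicit free bases of Lemma~\ref{lem: rep for Clifford deformation of double ore extensions}(3)) or simply cite the change-of-rings theorem as the paper does.
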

\begin{proof}
Clearly, $z\in A_2$ and $h\in J_2$ are both central elements of $A$ and $J$ respectively.

(1,2) follow by the definition of Clifford deformations and Lemma \ref{lem: properties for B!}.

(3) Since $C_{A^!}(z)$, $C_{J^!}(h)$ and  $C_{B^!}(z+h)$ are Clifford deformations, a class of PBW deformations,  they have the same forms of $\kk$-bases of $A^!$, $J^!$ and $B^!$ respectively. By Lemma \ref{lem: properties for B!}(4), the map of multiplication
$$
C_{J^!}(h)\otimes C_{A^!}(z)\to C_{B^!}(z+h)
$$
is an isomorphism as left $\Z_2$-graded $C_{J^!}(h)$-modules and right $\Z_2$-graded $C_{A^!}(z)$-modules. 

Similarly, one can check that the natural map $
C_{A^!}(z)\otimes C_{J^!}(h)\to C_{B^!}(z+h)
$
is an isomorphism as right $\Z_2$-graded $C_{J^!}(h)$-modules and left $\Z_2$-graded $C_{A^!}(z)$-modules.

(4) It follows by a graded version of \cite[Theorem 7.2.6]{MR}.
\end{proof}

Our main goal is to present a skew version of the classical Kn\"orrer's periodicity theorem. To this end, we consider noncommutative quadric hypersurface having the form $B/(z+y_1^2+y_2^2)$ in the rest of this paper where $z\in A_2$ is a nonzero regular central element. To make such class of noncommutative quadric hypersurfaces well defined, $y_1^2+y_2^2$ should be a central element of $B$ and also of $J$. By an easy calculation, we have the following result.

\begin{lemma}\label{lem: p12=+-1}
Suppose the element $y^2_1+y^2_2$ is central in $B$, then 
	$P=\{1,0\}$ or $P=\{-1,p_{11}\}$.
\end{lemma}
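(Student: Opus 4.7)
My plan is to compute the commutators of $y_1^2+y_2^2$ with the two generators $y_1$ and $y_2$ directly inside the subalgebra $J=\kk\langle y_1,y_2\rangle/(y_2y_1-p_{12}y_1y_2-p_{11}y_1^2)$, and then exploit the freeness of $J$ on the PBW basis $\{y_1^iy_2^j\}$ (Lemma \ref{lem: properties for B!}(4) applied to $J$, or directly from the definition of double Ore extension). Centrality of $y_1^2+y_2^2$ in $B$ forces centrality in $J$, so the constraints will be a handful of polynomial equations in $p_{12},p_{11}$ coming from equating coefficients on that basis.

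First I would reduce $y_2y_1^2$ and $y_2^2y_1$ to normal form. Using the defining relation, $y_2y_1^2=p_{12}y_1y_2y_1+p_{11}y_1^3=p_{12}^2y_1^2y_2+p_{11}(p_{12}+1)y_1^3$, and analogously $y_2^2y_1=p_{12}^2y_1y_2^2+p_{11}p_{12}(1+p_{12})y_1^2y_2+p_{11}^2(p_{12}+1)y_1^3$. Then
\begin{align*}
[y_2,y_1^2+y_2^2] &= y_2y_1^2-y_1^2y_2 = (p_{12}^2-1)y_1^2y_2+p_{11}(p_{12}+1)y_1^3,\\
[y_1,y_1^2+y_2^2] &= y_1y_2^2-y_2^2y_1 = (1-p_{12}^2)y_1y_2^2-p_{11}p_{12}(1+p_{12})y_1^2y_2-p_{11}^2(p_{12}+1)y_1^3.
\end{align*}
Since $\{y_1^iy_2^j\}$ is a basis, each expression vanishes iff every coefficient vanishes. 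The first line gives $p_{12}^2=1$ together with $p_{11}(p_{12}+1)=0$, and the second line is then automatically satisfied.

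Splitting on the sign: if $p_{12}=1$ the relation $p_{11}\cdot 2=0$ forces $p_{11}=0$, yielding $P=\{1,0\}$; if $p_{12}=-1$ the factor $p_{12}+1=0$ makes the constraint on $p_{11}$ vacuous, yielding $P=\{-1,p_{11}\}$ with $p_{11}\in\kk$ arbitrary. The centralizing condition on $A$ is not needed here because $y_1^2+y_2^2\in J$ already has a well-defined reduced form independent of $A$; only the $J$-internal commutators carry information about $P$. There is no serious obstacle — the computation is purely in $J$ and uses only the PBW property and the single quadratic relation — so the main care is just tracking the coefficients of $y_1y_2^2$, $y_1^2y_2$, and $y_1^3$ correctly when reducing $y_2^2y_1$.
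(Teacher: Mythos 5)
Your proof is correct and follows the route the paper implicitly intends (the paper only remarks "by an easy calculation" after noting that $y_1^2+y_2^2$ must be central in $J$): reduce $y_2y_1^2$ and $y_2^2y_1$ to PBW normal form, equate coefficients, and solve $p_{12}^2=1$, $p_{11}(p_{12}+1)=0$, using $\mathrm{char}\,\kk=0$ to force $p_{11}=0$ when $p_{12}=1$. The computations check out, and you are right that the constraints coming from commutation with $A$ are irrelevant to determining $P$.
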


With respect to the values of $p_{12}$, we discuss in two cases in the following.

\section{A Skew version of Kn\"orrer's Periodicity Theorem with $p_{12}=1$}
Let $B=A_{\{1,0\}}[y_1,y_2;\sigma]$ be a $\Z$-graded double Ore extension of a noetherian Koszul Artin-Schelter regular algebra $A=T(V)/(R)$  with $\deg y_1=\deg y_2=1$, and  $z\in A_2$ be a nonzero regular central element of $A$. It is not hard to obtain the following conditions for $\sigma$.

\begin{lemma}\label{lem: +1case condition for central element}
 The element  $z+y_1^2+y_2^2$ is a central element of $B$ if and only if 
\begin{enumerate}
	\item $\sigma_{11}^2+\sigma_{21}^2=\sigma_{12}^2+\sigma_{22}^2=\id_A$;
	\item 
	$\sigma_{11}\sigma_{12}+\sigma_{21}\sigma_{22}+\sigma_{12}\sigma_{11}+\sigma_{22}\sigma_{21}=0$;
	\item $\sigma(z)=
	\begin{pmatrix}
		z &	0\\
		0	&	z
	\end{pmatrix}.
	$
\end{enumerate}	
\end{lemma}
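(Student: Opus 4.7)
The plan is to check centrality of $z+y_1^2+y_2^2$ generator by generator. Since $B$ is generated (as an algebra) by $A$ together with $y_1,y_2$, and since $z\in A_2$ is already central in $A$, the element $z+y_1^2+y_2^2$ is central in $B$ if and only if it commutes with $y_1$, with $y_2$, and with every $a\in A$. I would organize the argument in two blocks: (A) commutativity with $y_1,y_2$ yields condition (3); (B) commutativity with $a\in A$ yields conditions (1) and (2). Throughout, the decisive tool is that $B$ is a free left $A$-module on the basis $\{y_1^iy_2^j\mid i,j\geq 0\}$, so equalities in $B$ can be checked by comparing coefficients on this basis.

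For block (A), using $y_2y_1=y_1y_2$ (since $P=\{1,0\}$ and $\tau=0$), a quick computation gives $y_1\cdot(y_1^2+y_2^2)=(y_1^2+y_2^2)\cdot y_1$, and likewise for $y_2$. Hence $[y_i,z+y_1^2+y_2^2]=[y_i,z]$ for $i=1,2$. From the double Ore extension relation $y_ia=\sigma_{i1}(a)y_1+\sigma_{i2}(a)y_2$ applied to $a=z$, the equality $y_iz=zy_i$ forces $\sigma_{ij}(z)=\delta_{ij}z$ for $i,j=1,2$, which is exactly (3).

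For block (B), fix $a\in A$ and expand $y_1^2a$ by applying $y_ia=\sigma_{i1}(a)y_1+\sigma_{i2}(a)y_2$ twice; after collecting and using $y_2y_1=y_1y_2$ to put terms in the basis $\{y_1^2,y_1y_2,y_2^2\}$, one gets
\[
y_1^2a=\sigma_{11}^2(a)y_1^2+\bigl(\sigma_{11}\sigma_{12}+\sigma_{12}\sigma_{11}\bigr)(a)\,y_1y_2+\sigma_{12}^2(a)y_2^2,
\]
and symmetrically for $y_2^2a$. Adding these and matching coefficients against $a\cdot y_1^2+0\cdot y_1y_2+a\cdot y_2^2$ in the free left $A$-module $B$ produces exactly (1) on the diagonal and (2) on the $y_1y_2$-coefficient. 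Since $z$ is central in $A$, $[z,a]=0$, so this captures all the new conditions. Conversely, assuming (1), (2), (3), one reverses the computation to verify commutativity on generators, hence centrality in $B$.

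There is no real obstacle; the only care required is in the expansion of $y_i\sigma_{jk}(a)$, where the noncommutativity between $A$ and the $y$'s must be tracked, and in the bookkeeping of the $y_1y_2$ coefficient, which collects four terms once $y_2y_1$ has been replaced by $y_1y_2$. The conditions \eqref{eq: conditio for sigma} (here with $p_{12}=1$, $p_{11}=0$) are automatic background structure and are not used in this particular verification.
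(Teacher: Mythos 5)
Your proof is correct and is the natural direct verification the authors had in mind (the paper supplies no proof, stating only that it is "not hard to obtain"). The expansion of $y_1^2a$ and $y_2^2a$ in the free left $A$-module basis $\{y_1^2,y_1y_2,y_2^2\}$ is carried out correctly, and matching coefficients against $ay_1^2+ay_2^2$ does give (1) from the $y_1^2$ and $y_2^2$ coefficients and (2) from the $y_1y_2$ coefficient; commutation with $y_1,y_2$ reduces cleanly to (3) once one notes $y_1^2+y_2^2$ is automatically central in $J=\kk[y_1,y_2]$. The remark that the compatibility conditions \eqref{eq: conditio for sigma} are not needed here is also correct.
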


By Lemma \ref{lem: +1case condition for central element}(3) and the construction \eqref{eq: dual map of simga from clifford deformation to its matrix}, we have the following algebra homomorphism
$$
\sigma^!=(\sigma_{ij}^!):C_{A^!}(z)\to M_2(C_{A^!}(z)),
$$
where any $\sigma_{ij}^!$ is a $\Z_2$-graded map for $i,j=1,2$. Then
\begin{corollary}\label{cor: properties for dual of sigma}
\begin{enumerate}
	\item $(\sigma^!_{11})^2+(\sigma^!_{21})^2=(\sigma^!_{12})^2+(\sigma^!_{22})^2=\id_{C_{A^!}(z)};$
	\item 
	$\sigma^!_{12}\sigma^!_{11}+\sigma^!_{22}\sigma^!_{21}+\sigma^!_{11}\sigma^!_{12}+\sigma^!_{21}\sigma^!_{22}=0;$
	\item $\sigma^!_{11}\sigma^!_{21}=\sigma^!_{21}\sigma^!_{11};$
	\item $\sigma^!_{12}\sigma^!_{22}=\sigma^!_{22}\sigma^!_{12};$
	\item $\sigma^!_{22}\sigma^!_{11}-\sigma^!_{12}\sigma^!_{21}=\sigma^!_{11}\sigma^!_{22}-\sigma^!_{21}\sigma^!_{12};$
 \item   $\sigma^!$ is $t$-invertible. 
\end{enumerate}	
\end{corollary}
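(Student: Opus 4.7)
The plan is to dispatch (6) from earlier results and to reduce (1)--(5) to dualized versions of the hypotheses via an induction on the $\Z$-degree in $T(V^*)$, from which the relations descend to $C_{A^!}(z)$. For (6), since $B = A_{\{1,0\}}[y_1,y_2;\sigma]$ is a $\Z$-graded double Ore extension with $p_{12} = 1 \neq 0$, Lemma \ref{lem: invertible iff double ore extension} gives that $\sigma$ is invertible, and then Lemma \ref{lem: inv of sigma and simga dual} yields that $\sigma^!$ is $t$-invertible.

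For (1)--(5), I would first verify the relations on the generating space $V^*$ of $C_{A^!}(z)$. The hypotheses of Lemma \ref{lem: +1case condition for central element}(1)--(2) and the Ore-extension identities \eqref{eq: conditio for sigma} (with $p_{12}=1$), restricted to $V = A_1$, give relations among the $\sigma_{ij}|_V \in \End(V)$; applying the graded linear dual functor $(-)^*$ transports these to relations among $\sigma^!_{ij}|_{V^*} = (\sigma_{ij}|_V)^* \in \End(V^*)$. The contravariance of duality reverses each composition, but (1) is preserved since squaring is invariant under transposition, (2) is preserved since it is already symmetric in the composition order, and (3)--(5) acquire an overall reversal that, after mild re-arrangement of the four-term identity, matches the stated forms.

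Since $\sigma^!$ is induced by $\widetilde\sigma^*$ on $T(V^*)$ through the Clifford deformation quotient, it suffices to prove (1)--(5) for $\widetilde\sigma^*$ on the $\Z$-graded algebra $T(V^*)$. I would proceed by induction on degree: the base cases, degree $0$ (trivial, as $\widetilde\sigma^*$ is an algebra homomorphism) and degree $1$ (the preceding paragraph), are immediate. For the inductive step on a product $fg$, the algebra-homomorphism identity $\widetilde\sigma^*_{ij}(fg) = \sum_k \widetilde\sigma^*_{ik}(f)\widetilde\sigma^*_{kj}(g)$ produces a cross-term expansion that must be regrouped. For (1), this yields eight terms; four collapse via (1) for $f$ and for $g$ to give $fg$, while the remaining four share a common factor $\widetilde\sigma^*_{21}\widetilde\sigma^*_{11}(g) = \widetilde\sigma^*_{11}\widetilde\sigma^*_{21}(g)$ (by (3) for $g$) whose $f$-coefficient is annihilated by (2). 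Analogous regroupings, each calling on several of (1)--(5) for both $f$ and $g$, handle (2)--(5).

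The main obstacle is this interdependence: each inductive step for one of (1)--(5) pulls in several of the others, so the five identities must be carried through the induction simultaneously. The bookkeeping for up to eight cross-terms per relation is tedious but routine; the four-term identity (5) is the most delicate, since its expansion genuinely requires all three of (3), (4), and (5) on both factors. Finally, since $\widetilde\sigma^*$ stabilizes the Clifford ideal (see \eqref{eq: dual map of simga from clifford deformation to its matrix}), the identities proved on $T(V^*)$ descend verbatim to $\sigma^!$ on $C_{A^!}(z)$.
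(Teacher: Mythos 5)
Your proof is correct and uses the same approach as the paper: dualizing the hypotheses of Lemma~\ref{lem: +1case condition for central element} and \eqref{eq: conditio for sigma} for parts (1)--(5), and invoking Lemma~\ref{lem: invertible iff double ore extension} together with Lemma~\ref{lem: inv of sigma and simga dual} for part (6). The paper's proof simply asserts that (1)--(5) ``follow by'' those hypotheses, whereas your inductive argument on $\Z$-degree explicitly supplies the (genuinely needed, and interdependent) mechanism by which the dualized relations, once verified on $V^*$, propagate through $T(V^*)$ via the algebra-homomorphism expansion and then descend to $C_{A^!}(z)$ -- a step the paper leaves implicit.
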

\begin{proof}
(1)--(5) follow by Lemma \ref{lem: +1case condition for central element} and \eqref{eq: conditio for sigma}.  (6) holds by Lemma \ref{lem: invertible iff double ore extension} and Lemma \ref{lem: inv of sigma and simga dual},
\end{proof}

For now on, we construct a twisting system on $M_2(C_{A^!}(z))$. We choose an invertible $\Z_2$-graded basis $\II=\{\II^{(0)}_1,\II^{(0)}_2,\II^{(1)}_1,\II^{(1)}_2\}$  of $M_2(\kk)$ as follows,
\begin{equation}\label{eq: definition of basis}
\begin{array}{cccc}
	\II^{(0)}_{1}=\begin{pmatrix}
	1 & 0\\
	0 & 1
\end{pmatrix},&
\II^{(0)}_{2}=\begin{pmatrix}
	-\sqrt{-1} & 0\\
	0 & \sqrt{-1}
\end{pmatrix},&
\II^{(1)}_{1}=\begin{pmatrix}
	0 & 1\\
	1 & 0
\end{pmatrix},
&\II^{(1)}_{2}=\begin{pmatrix}
	0 & \sqrt{-1}\\
	-\sqrt{-1} & 0
\end{pmatrix}.
\end{array}
\end{equation}
Define linear maps $\theta^{(0)}$ and $\theta^{(1)}$ from $C_{A^!}(z)$ to $M_2(C_{A^!}(z))$ by 
\begin{equation}\label{eq: definition of theta}
\theta^{(0)}=(\theta^{(0)}_{jj'})=\begin{pmatrix}
	\id & \sigma^!_{12}\sigma^!_{11}+\sigma^!_{22}\sigma^!_{21}\\
	0& \sigma^!_{22}\sigma^!_{11}-\sigma^!_{12}\sigma^!_{21}
\end{pmatrix},\quad
\theta^{(1)}=(\theta^{(1)}_{jj'})=\sigma^!\circ\xi_{-1}=(\sigma^!_{ij}\circ\xi_{-1}).
\end{equation}
Clearly,  $\theta^{(i)}_{jj'}$ is a $\Z_2$-graded linear map for any $i\in\Z_2,j,j'=1,2$. Those two linear maps satisfy  the following properties.

\begin{lemma}\label{lem: properties of pair of theta} Retain the notations above. Then
\begin{enumerate}	\item  $\theta^{(0)}_{12}(aa')=a\theta^{(0)}_{12}(a')+\theta^{(0)}_{12}(a)\theta^{(0)}_{22}(a')$ for any $a,a'\in C_{A^!}(z)$;	
\item $\theta^{(0)}$ and $\theta^{(1)}$ are algebra homomorphisms;
\item $\theta^{(0)}_{12}\theta^{(0)}_{22}+\theta^{(0)}_{22}\theta^{(0)}_{12}=0$;
	
\item $(\theta^{(0)}_{22})^2-(\theta^{(0)}_{12})^2=\id_{C_{A^!}(z)}$;
\end{enumerate}
\end{lemma}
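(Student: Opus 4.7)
The plan is to prove (1) and (2) by direct operator-level expansion, and then to deduce (3) and (4) from the structure of the ambient algebra $C_{B^!}(z+y_1^2+y_2^2)$ in order to avoid a much larger quartic computation.

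For (1), I would expand $\theta^{(0)}_{12}(aa')=\sigma^!_{12}(\sigma^!_{11}(aa'))+\sigma^!_{22}(\sigma^!_{21}(aa'))$ by applying the comultiplication-style formula $\sigma^!_{ij}(xy)=\sum_{k=1}^{2}\sigma^!_{ik}(x)\sigma^!_{kj}(y)$ twice, producing eight terms of the form $(\sigma^!_{pq}\sigma^!_{rs})(a)\cdot(\sigma^!_{tu}\sigma^!_{vw})(a')$. Collected by the four possible ``$(a')$''-factors, these collapse via the quadratic identities $(\sigma^!_{11})^2+(\sigma^!_{21})^2=\id=(\sigma^!_{12})^2+(\sigma^!_{22})^2$ of Corollary \ref{cor: properties for dual of sigma}(1) and the relation $\sigma^!_{11}\sigma^!_{12}+\sigma^!_{21}\sigma^!_{22}=-(\sigma^!_{12}\sigma^!_{11}+\sigma^!_{22}\sigma^!_{21})$ of Corollary \ref{cor: properties for dual of sigma}(2) precisely into $a\theta^{(0)}_{12}(a')+\theta^{(0)}_{12}(a)\theta^{(0)}_{22}(a')$. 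For (2), since $\theta^{(0)}(a)$ is upper triangular with diagonal $(a,\theta^{(0)}_{22}(a))$, the identity $\theta^{(0)}(aa')=\theta^{(0)}(a)\theta^{(0)}(a')$ splits entrywise into (1) (the $(1,2)$-entry) and the multiplicativity of $\theta^{(0)}_{22}$ (the $(2,2)$-entry); the latter is a parallel direct expansion whose cancellations use the commutations $\sigma^!_{11}\sigma^!_{21}=\sigma^!_{21}\sigma^!_{11}$ and $\sigma^!_{12}\sigma^!_{22}=\sigma^!_{22}\sigma^!_{12}$ of Corollary \ref{cor: properties for dual of sigma}(3)--(4) to annihilate two cross-pairs, and the symmetry $\sigma^!_{22}\sigma^!_{11}-\sigma^!_{12}\sigma^!_{21}=\sigma^!_{11}\sigma^!_{22}-\sigma^!_{21}\sigma^!_{12}$ of Corollary \ref{cor: properties for dual of sigma}(5) to factor the remainder as $\theta^{(0)}_{22}(a)\theta^{(0)}_{22}(a')$. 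Since $\sigma^!$ is an algebra homomorphism by construction and $\xi_{-1}$ is a $\Z_2$-graded algebra automorphism, $\theta^{(1)}=\sigma^!\circ\xi_{-1}$ is automatically multiplicative.

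For (3) and (4), I would work inside $C_{B^!}(z+y_1^2+y_2^2)$, which by Lemma \ref{lem: rep for Clifford deformation of double ore extensions}(3) is free as a right $C_{A^!}(z)$-module with basis $\{1,y_1^*,y_2^*,y_1^*y_2^*\}$. A direct calculation using the $R_\tau$-relations $v^*y_i^*=-y_1^*\sigma^!_{1i}(v^*)-y_2^*\sigma^!_{2i}(v^*)$ together with the Clifford-type relations $(y_1^*)^2=(y_2^*)^2=1$ and $y_1^*y_2^*=-y_2^*y_1^*$ in $C_{J^!}(y_1^2+y_2^2)\subseteq C_{B^!}(z+y_1^2+y_2^2)$ yields
$$v^*\cdot(y_1^*y_2^*)=\theta^{(0)}_{12}(v^*)+(y_1^*y_2^*)\theta^{(0)}_{22}(v^*),\qquad v^*\in V^*,$$
and parts (1) and (2), already established, together with induction on word length in $V^*$ extend this to $w\cdot(y_1^*y_2^*)=\theta^{(0)}_{12}(w)+(y_1^*y_2^*)\theta^{(0)}_{22}(w)$ for every $w\in C_{A^!}(z)$. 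Applying this extended rule twice inside $C_{B^!}$ and invoking $(y_1^*y_2^*)^2=-1$ produces
$$-w=\bigl((\theta^{(0)}_{12})^2-(\theta^{(0)}_{22})^2\bigr)(w)+(y_1^*y_2^*)\bigl(\theta^{(0)}_{12}\theta^{(0)}_{22}+\theta^{(0)}_{22}\theta^{(0)}_{12}\bigr)(w),$$
and matching the ``$1$''- and ``$y_1^*y_2^*$''-coefficients separately---legitimate by the freeness just cited---extracts (4) and (3) respectively.

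The main practical obstacle is the bookkeeping in parts (1) and (2), where one must repeatedly pick the correct identity from Corollary \ref{cor: properties for dual of sigma} to collapse an eight-term sum to the desired expression. Routing (3) and (4) through $C_{B^!}(z+y_1^2+y_2^2)$ sidesteps an even larger sixteen-term quartic-in-$\sigma^!$ calculation and conceptually exposes the two identities as the operator-level shadow of $(y_1^*y_2^*)^2=-1$ in the ambient Clifford deformation.
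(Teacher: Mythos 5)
Your proof is correct. For (1) and (2) you are doing, in full, what the paper compresses into one line: expand $\sigma^!_{ij}(aa')=\sum_{k}\sigma^!_{ik}(a)\sigma^!_{kj}(a')$ twice and use the identities of Corollary \ref{cor: properties for dual of sigma} to collapse the eight resulting terms into the claimed $(1,2)$- and $(2,2)$-entries of $\theta^{(0)}(a)\theta^{(0)}(a')$. The real divergence is in (3) and (4). The paper's intended argument is a direct quartic computation inside the endomorphism algebra of $C_{A^!}(z)$ from Corollary \ref{cor: properties for dual of sigma}; it is most efficiently packaged by noting the factorizations $\theta^{(0)}_{22}\pm\sqrt{-1}\,\theta^{(0)}_{12}=(\sigma^!_{22}\pm\sqrt{-1}\,\sigma^!_{12})(\sigma^!_{11}\pm\sqrt{-1}\,\sigma^!_{21})$ and then checking $(\theta^{(0)}_{22}+\sqrt{-1}\,\theta^{(0)}_{12})^{2}=\id$, which encodes (3) and (4) simultaneously. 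You instead identify $\theta^{(0)}_{12}(w)$ and $\theta^{(0)}_{22}(w)$ as the $1$- and $y_1^*y_2^*$-coordinates of $w\cdot y_1^*y_2^*$ in the free right $C_{A^!}(z)$-module $C_{B^!}(z+y_1^2+y_2^2)=C_{A^!}(z)\oplus y_1^*C_{A^!}(z)\oplus y_2^*C_{A^!}(z)\oplus y_1^*y_2^*C_{A^!}(z)$ (Lemma \ref{lem: rep for Clifford deformation of double ore extensions}(3)), verify the commutation rule $w\cdot y_1^*y_2^*=\theta^{(0)}_{12}(w)+y_1^*y_2^*\,\theta^{(0)}_{22}(w)$ for $w\in V^*$ from the $R_\tau$-relations, propagate it to all of $C_{A^!}(z)$ via the parts (1) and (2) already established, and then read (3) and (4) from the single ambient relation $(y_1^*y_2^*)^2=-1$ together with freeness. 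No circularity is introduced, since Lemma \ref{lem: rep for Clifford deformation of double ore extensions} precedes the present lemma. The trade-off is clear: the paper's route stays internal to $C_{A^!}(z)$, while yours imports the larger Clifford deformation (so implicitly invokes the section's standing hypothesis that $z+y_1^2+y_2^2$ is central in $B$), and in return it explains the two operator identities as the shadow, in linear endomorphisms of $C_{A^!}(z)$, of the Clifford relation $(y_1^*y_2^*)^2=-1$.
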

\begin{proof}
(1) holds since $\sigma^!$ is an algebra homomorphism. (2) follows by (1). (3,4) follow by Corollary \ref{cor: properties for dual of sigma}.
\end{proof}

\begin{theorem}\label{thm: +1case, clifford def is twisted matrix algebra}
Let $\theta^{(0)}$ and $\theta^{(1)}$ be linear maps from $C_{A^!}(z)$ to $M_2(C_{A^!}(z))$ defined in \eqref{eq: definition of theta} and $\II$ be the basis of $M_2(\kk)$ defined in \eqref{eq: definition of basis}. Then
\begin{enumerate}
	\item  $\Theta=(\{\theta^{(i)}\}_{i\in\Z_2},\II)$ is a twisting system of $M_2(C_{A^!}(z))$;
	\item $C_{B^!}(z+y_1^2+y_2^2)\cong {^\Theta M_2(C_{A^!}(z))}$ as $\Z_2$-graded algebras where the $\Z_2$-grading of  
 ${^\Theta M_2(C_{A^!}(z))}$ comes from the total degrees (see Remark \ref{rem: Z_2-grading of tiwsted matrix algebra}(1)).
\end{enumerate}
\end{theorem}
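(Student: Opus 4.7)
The plan is to realize $C_{B^!}(z+y_1^2+y_2^2)$ concretely as a rank-$4$ free $C_{A^!}(z)$-module via Lemma \ref{lem: rep for Clifford deformation of double ore extensions} and transport its associative algebra structure onto $M_2(C_{A^!}(z))$, simultaneously reading off the twisting system $\Theta$ and the claimed isomorphism. First I would identify the subalgebra $C_{J^!}(y_1^2+y_2^2)\subseteq C_{B^!}(z+y_1^2+y_2^2)$. Since $P=\{1,0\}$, Lemma \ref{lem: properties for B!} together with the Clifford deformation yields $(y_1^*)^2=1$, $(y_2^*)^2=1$, and $y_1^*y_2^*+y_2^*y_1^*=0$ in $C_{J^!}(y_1^2+y_2^2)$, so this is a Clifford algebra isomorphic to $M_2(\kk)$. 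A direct check against \eqref{eq: definition of basis} shows that $1\mapsto\II^{(0)}_1$, $y_1^*y_2^*\mapsto\II^{(0)}_2$, $y_1^*\mapsto\II^{(1)}_1$, $y_2^*\mapsto\II^{(1)}_2$ gives an isomorphism of $\Z_2$-graded algebras, so the structure constants $l^{(ii')}_{tjs}$ of $\II$ from \eqref{eq: multi for II with element in L} agree with the multiplication constants of $\{1, y_1^*y_2^*, y_1^*, y_2^*\}$ inside $C_{B^!}(z+y_1^2+y_2^2)$.

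Next I would compute the straightening rules $a\cdot y^{(i)*}_j$ for $a\in C_{A^!}(z)$ (writing $y^{(0)*}_1=1$, $y^{(0)*}_2=y_1^*y_2^*$, $y^{(1)*}_1=y_1^*$, $y^{(1)*}_2=y_2^*$). The identity $v^*y_i^* = -y_1^*\sigma^!_{1i}(v^*)-y_2^*\sigma^!_{2i}(v^*)$ inherited from $R_\tau$, combined with induction on length in $C_{A^!}(z)$, yields $ay_i^* = \sum_j y_j^*\theta^{(1)}_{ji}(a)$ with $\theta^{(1)}=\sigma^!\circ\xi_{-1}$; the factor $(-1)^{\deg a}$ arises from the parity of $a$. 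Iterating this twice and collapsing products $y_j^*y_k^*$ via the Clifford relations of the previous step gives $a\cdot y_1^*y_2^* = \theta^{(0)}_{12}(a) + y_1^*y_2^*\cdot\theta^{(0)}_{22}(a)$, with $\theta^{(0)}_{12}$ and $\theta^{(0)}_{22}$ exactly as in \eqref{eq: definition of theta}. The key simplification $\theta^{(1)}_{ij}\theta^{(1)}_{kl}=\sigma^!_{ij}\sigma^!_{kl}$ uses that each $\sigma^!_{ij}$ preserves the $\Z_2$-grading of $C_{A^!}(z)$ and hence commutes with $\xi_{-1}$, together with $\xi_{-1}^2=\id$.

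Equipped with these formulas, I would verify $\Theta=(\{\theta^{(i)}\}_{i\in\Z_2},\II)$ is a twisting system in the sense of Definition \ref{def: twisting system}. The identity $\theta^{(0)}(1)=\theta^{(1)}(1)=I_2\in GL_2(\kk)$ is immediate from \eqref{eq: definition of theta} and $\sigma^!(1)=I_2$. Since $\sigma$ is invertible by Lemma \ref{lem: invertible iff double ore extension}, $\sigma^!$ is $t$-invertible by Lemma \ref{lem: inv of sigma and simga dual} with $t$-inverse $\varphi^!$, and then $\varphi^{(1)}:=\varphi^!\circ\xi_{-1}$ is a $t$-inverse of $\theta^{(1)}$ by the same grading-commutation argument. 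For $\theta^{(0)}$ I would perform the symmetric computation: from the inverted form of the $R_\tau$-relation one gets $y_j^*\cdot a = \sum_k \varphi^{(1)}_{jk}(a)\,y_k^*$, and expanding $y_1^*y_2^*\cdot a$ then produces an explicit $t$-inverse $\varphi^{(0)}$ of $\theta^{(0)}$. The compatibility condition of Proposition \ref{prop: equivalent condition for theta} is then automatic: the product formula
\[
(y^{(i)*}_j a)\cdot(y^{(i')*}_{j'} a') \;=\; \sum_{s,t} y^{(i+i')*}_t\, l^{(ii')}_{tjs}\, \theta^{(i')}_{sj'}(a)\, a'
\]
is just the associative multiplication in $C_{B^!}(z+y_1^2+y_2^2)$, so reading off coefficients in $C_{A^!}(z)$ turns the associativity of $C_{B^!}$ into the identity required by Proposition \ref{prop: equivalent condition for theta}(3) for $\{\theta^{(i)}\}$. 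For part (2) it then remains to note that $\Phi:C_{B^!}(z+y_1^2+y_2^2)\to {^\Theta M_2(C_{A^!}(z))}$, $y^{(i)*}_j\cdot a\mapsto \II^{(i)}_j\cdot a$, is a bijective algebra homomorphism respecting the total-degree $\Z_2$-grading (see Remark \ref{rem: Z_2-grading of tiwsted matrix algebra}(1)), since both sides carry parity $i+\deg a$. I expect the main technical obstacle to be producing a clean $t$-inverse of $\theta^{(0)}$: a direct matrix-inverse approach would require showing $\sigma^!_{22}\sigma^!_{11}-\sigma^!_{12}\sigma^!_{21}$ is invertible by bare hands, which is best circumvented by extracting $\varphi^{(0)}$ from the invertibility of $y_1^*y_2^*$ inside $C_{B^!}$ (with inverse $-y_1^*y_2^*$, since $(y_1^*y_2^*)^2=-1$).
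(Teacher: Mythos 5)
Your proposal is correct and takes a genuinely different, and arguably cleaner, route than the paper. The paper proves the two parts in sequence: it first establishes that $\Theta$ is a twisting system by writing down the $t$-inverses of $\theta^{(0)}$, $\theta^{(1)}$ explicitly, computing the matrices $L^{(ii')}_j$, and then verifying the simplified form of Proposition~\ref{prop: equivalent condition for theta}(3) by a direct check that the authors label ``straightforward but tedious''; only then does it present $C_{B^!}(z+y_1^2+y_2^2)$ by generators and relations, write out the full multiplication table of ${^\Theta M_2(C_{A^!}(z))}$, and exhibit the isomorphism sending $y_1^*\mapsto\II^{(1)}_1$, $y_2^*\mapsto\II^{(1)}_2$, $v^*\mapsto\II^{(0)}_1v^*$. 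You instead run a transport-of-structure argument: using Lemma~\ref{lem: rep for Clifford deformation of double ore extensions}(3) you realize $C_{B^!}(z+y_1^2+y_2^2)$ as a free right $C_{A^!}(z)$-module on $\{1,y_1^*y_2^*,y_1^*,y_2^*\}$, match $C_{J^!}(y_1^2+y_2^2)\cong M_2(\kk)$ with the basis $\II$ in \eqref{eq: definition of basis}, and extract the straightening rule $a\,y^{(i)*}_j=\sum_s y^{(i)*}_s\theta^{(i)}_{sj}(a)$ by induction from the $R_\tau$-relations and the Clifford relations (the parity sign correctly accounting for the $\xi_{-1}$ factor). At that point the multiplication of $C_{B^!}$ in this basis is \emph{forced} to be the $\star$-formula of Theorem~\ref{thm: twisted matrix algebra}, and the associativity of $C_{B^!}$ (a free fact) \emph{implies} the identity of Proposition~\ref{prop: equivalent condition for theta}(1), since comparing coefficients of the basis elements in $\bigl((y^{(i)*}_j a)(y^{(i')*}_{j'}a')\bigr)(y^{(i'')*}_{j''}a'')$ versus $(y^{(i)*}_j a)\bigl((y^{(i')*}_{j'}a')(y^{(i'')*}_{j''}a'')\bigr)$ is precisely that identity. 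Your handling of the $t$-invertibility of $\theta^{(0)}$ via the invertibility of $y_1^*y_2^*$ (with $(y_1^*y_2^*)^2=-1$, so $(y_1^*y_2^*)^{-1}=-y_1^*y_2^*$) is a neat way to avoid the paper's bare-hands matrix inverse; indeed, reading the opposite-sided straightening rule off $(-y_1^*y_2^*)\cdot a$ recovers the paper's $\varphi^{(0)}$, and in particular establishes the invertibility of $\theta^{(0)}_{22}=\sigma^!_{22}\sigma^!_{11}-\sigma^!_{12}\sigma^!_{21}$ without separate argument. What your approach buys is that the twisting-system axioms come for free from associativity rather than requiring a hand check; what the paper's approach buys is a fully self-contained verification of Definition~\ref{def: twisting system} for $\Theta$ prior to invoking Theorem~\ref{thm: twisted matrix algebra}, which may read more conservatively. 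If you write this up in full, the one point to make explicit is that the formula $(y^{(i)*}_j a)(y^{(i')*}_{j'}a')=\sum_{s,t}y^{(i+i')*}_t l^{(ii')}_{tjs}\theta^{(i')}_{sj'}(a)a'$ genuinely holds in $C_{B^!}$, which needs the straightening rule for \emph{both} degrees $i'=0,1$ (your derivation of the $i'=0$ case from the $i'=1$ case via $y_1^*y_2^*$ and the Clifford relations is the correct route).
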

\begin{proof}
(1) Write $\varphi$ for the inverse of $\sigma$. By Lemma \ref{lem: inv of sigma and simga dual} and its proof, $\varphi^!=(\varphi^!_{ij})$ exits and is the $t$-inverse of $\sigma^!$. Then one obtains that  $\theta^{(0)}$ and $\theta^{(1)}$ are $t$-invertible with the $t$-inverses
$$
\begin{pmatrix}
	\id  & 0 \\\varphi_{21}^!\varphi_{11}^!+\varphi_{22}^!\varphi_{12}^!&\varphi_{22}^!\varphi_{11}^!-\varphi_{21}^!\varphi_{12}^!
\end{pmatrix},\qquad
\varphi^!\circ \xi_{-1},
$$
respectively. Clearly, 
$\theta^{(1)}(1)=I_2\in GL_2(\kk)$, and the invertible matrices defined in \eqref{eq: symbol of L} are $L^{(ii')}_1=I_2$ for any $i,i'\in \Z_2$, and 
$$
L^{(00)}_2=\begin{pmatrix}
	0 & -1\\
	1 & 0
\end{pmatrix},\ 
L^{(01)}_2=\begin{pmatrix}
	0 & 1\\
	-1 & 0
\end{pmatrix},\ 
L^{(10)}_2=\begin{pmatrix}
	0 & -1\\
	1 & 0
\end{pmatrix},\ 
L^{(11)}_2=\begin{pmatrix}
	0 & 1\\
	-1 & 0
\end{pmatrix}.
$$
Then by Corollary \ref{cor: properties for dual of sigma} and Lemma \ref{lem: properties of pair of theta}, it is straightforward but tedious to check $\Theta$ satisfies the following simplified equations of Proposition \ref{prop: equivalent condition for theta}(3) 
\begin{align*}
	\theta^{(i')}_{pj'}\left(\theta^{(i)}_{11}(x)x'\right)+
	l^{(ii')}_{p2u }\theta^{(i')}_{uj'}\left(\theta^{(i)}_{21}(x)x'\right)=&\theta^{(i+i')}_{p1}(x)\theta^{(i')}_{1j'}(x')+
	\theta^{(i+i')}_{p2}(x)\theta^{(i')}_{2j'}(x'),\\
	\theta^{(i')}_{pj'}\left(\theta^{(i)}_{12}(x)x'\right)+
	l^{(ii')}_{p2u }\theta^{(i')}_{uj'}\left(\theta^{(i)}_{22}(x)x'\right)=&l^{(ii')}_{122}\theta^{(i+i')}_{p1}(x)\theta^{(i')}_{2j'}(x')+
	l^{(ii')}_{221}\theta^{(i+i')}_{p2}(x)\theta^{(i')}_{1j'}(x'),
\end{align*}
where $u=\{1,2\}-\{p\}$ for any $x,x'\in E$, $i,i'\in \Z_2$ and $p=1,2$.

(b)	By Lemma \ref{lem: rep for Clifford deformation of double ore extensions}(1), $C_{B^!}(z+y_1^2+y_2^2)$ is generated by $V^*,y^*_1,y^*_2$ subject to the relations consisting of the relations of $C_{A^!}(z)$, and 
\begin{align*}
&(y_1^*)^2-1,(y_2^*)^2-1,y_1^*y_2^*+y_2^*y_1^*,\\ &v^*y_i^*+y_1^*\sigma_{1i}^*(v^*)+y_2^*\sigma_{2i}^*(v^*), \qquad \forall v^*\in V^*,i=1,2.
\end{align*}
We list the precise multiplication of  ${^{\Theta}M_2(C_{A^!}(z))}$ below: for any $x,x'\in C_{A^!}(z)$,
\begin{table}[h]
    \centering
        \renewcommand\arraystretch{2}
    \begin{tabular}{c|cccc}
       ~ & $\II^{(0)}_1x'$& $\II^{(0)}_2x' $&$\II^{(1)}_1x'$ & $\II^{(1)}_2x'$ \\ \hline
       $ \II^{(0)}_1x$ & $\II^{(0)}_1xx'$ & \makecell{$\II^{(0)}_1\theta^{(0)}_{12}(x)x'$\\$+\II^{(0)}_2\theta^{(0)}_{22}(x)x'$}& \makecell{$\II^{(1)}_1\theta^{(1)}_{11}(x)x'$\\$+\II^{(1)}_2\theta^{(1)}_{21}(x)x'$} & \makecell{$\II^{(1)}_1\theta^{(1)}_{12}(x)x'$\\$+\II^{(1)}_2\theta^{(1)}_{22}(x)x' $}\\ 
        $\II^{(0)}_2x$ &$ \II^{(0)}_2xx' $& \makecell{$-\II^{(0)}_1\theta^{(0)}_{22}(x)x'$\\$+\II^{(0)}_2\theta^{(0)}_{12}(x)x'$} & 
        \makecell{$\II^{(1)}_1\theta^{(1)}_{21}(x)x'$\\$-\II^{(1)}_2\theta^{(1)}_{11}(x)x' $}&\makecell{ $\II^{(1)}_1\theta^{(1)}_{22}(x)x'$\\$-\II^{(1)}_2\theta^{(1)}_{12}(x)x' $}\\ 
      
        $\II^{(1)}_1x $&$ \II^{(1)}_1xx'$ &\makecell{$ \II^{(1)}_1\theta^{(0)}_{12}(x)x'$\\$+\II^{(1)}_2\theta^{(0)}_{22}(x)x'$ }& \makecell{$\II^{(0)}_1\theta^{(1)}_{11}(x)x'$\\$+\II^{(0)}_2\theta^{(1)}_{21}(x)x'$} &\makecell{$ \II^{(0)}_1\theta^{(1)}_{12}(x)x'$\\$+\II^{(0)}_2\theta^{(1)}_{22}(x)x'$} \\
    $\II^{(1)}_2x $& $\II^{(1)}_2xx'$ & \makecell{$-\II^{(1)}_1\theta^{(0)}_{22}(x)x'$\\$+\II^{(1)}_2\theta^{(0)}_{12}(x)x'$} &\makecell{ $\II^{(0)}_1\theta^{(1)}_{21}(x)x'$\\$-\II^{(0)}_2\theta^{(1)}_{11}(x)x' $}&\makecell{$ \II^{(0)}_1\theta^{(1)}_{22}(x)x'$\\$-\II^{(0)}_2\theta^{(1)}_{12}(x)x'$}     \\   
    \end{tabular}
         \setlength{\belowcaptionskip}{0cm}
         \vspace{0.1cm}
       \caption{Multiplication Table of ${^\Theta M_2(C_{A^!}(z))}$.}\label{tab: mul of twisted matrix of C_A}
\end{table}

Then one obtains that there exists a 
$\Z_2$-graded algebra isomorphism from $C_{B^!}(z+y_1^2+y_2^2)$ to ${^\Theta M_2(C_{A^!}(z))}$ mapping
$$y^*_1\mapsto \II^{(1)}_1,\qquad y^*_2\mapsto \II^{(1)}_2,\qquad v^*\mapsto \II^{(0)}_1v^*,\ \forall v^*\in V^*.$$
\end{proof}

\begin{proposition}\label{prop: e is a full idempotent} Retain the assumptions of Theorem \ref{thm: +1case, clifford def is twisted matrix algebra}. The homogeneous element $e=\begin{pmatrix}1&0\\0&0\end{pmatrix}$ is a full idempotent of 
${^\Theta M_2(C_{A^!}(z))}$.

As a consequence,  $\gr_{\Z_2} {^\Theta M_2(C_{A^!}(z))}$ and  $\gr_{\Z_2} e\left({^\Theta M_2(C_{A^!}(z))}\right)e$ is equivalent.
\end{proposition}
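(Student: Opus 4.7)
The plan is to verify directly that $e$ is an idempotent in the twisted multiplication $\star$, then exhibit an element whose $\star$-sandwich with $e$ equals $1-e$, which forces $e$ to be full. The categorical consequence then follows from the standard Morita-type equivalence for full idempotents.

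First I would rewrite $e$ in the basis $\II$ of \eqref{eq: definition of basis}. A direct check shows $e=\tfrac{1}{2}\II^{(0)}_1+\tfrac{\sqrt{-1}}{2}\II^{(0)}_2$. To compute $e\star e$ I would use the multiplication table (Table \ref{tab: mul of twisted matrix of C_A}) with $x=x'=1$. The relevant coefficients are the values $\theta^{(0)}_{ij}(1)$, and here the key observation is that since $\sigma(1_A)=I_2$, the restrictions of each $\sigma_{ij}$ to $A_0$ act as $\delta_{ij}\id$; dualizing and passing to the Clifford deformation gives $\sigma^!_{ij}(1)=\delta_{ij}$, hence from \eqref{eq: definition of theta} we obtain $\theta^{(0)}_{12}(1)=0$ and $\theta^{(0)}_{22}(1)=1$. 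Plugging these in shows $\II^{(0)}_1\star\II^{(0)}_1=\II^{(0)}_1$, $\II^{(0)}_1\star\II^{(0)}_2=\II^{(0)}_2=\II^{(0)}_2\star\II^{(0)}_1$, and $\II^{(0)}_2\star\II^{(0)}_2=-\II^{(0)}_1$. Expanding $e\star e$ and collecting yields $e\star e=e$.

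Next, I would show $e$ is full by computing the conjugate $\II^{(1)}_1\star e\star \II^{(1)}_1$. Using the table entries with $x=x'=1$ and noting that $\theta^{(1)}(1)=\sigma^!(1)=I_2$, one finds $\II^{(1)}_1\star\II^{(0)}_1=\II^{(1)}_1$ and $\II^{(1)}_1\star\II^{(0)}_2=\II^{(1)}_2$, so $\II^{(1)}_1\star e=\tfrac{1}{2}\II^{(1)}_1+\tfrac{\sqrt{-1}}{2}\II^{(1)}_2$. A further application of the table gives $\II^{(1)}_1\star\II^{(1)}_1=\II^{(0)}_1$ and $\II^{(1)}_2\star\II^{(1)}_1=-\II^{(0)}_2$, and collecting terms yields $\II^{(1)}_1\star e\star \II^{(1)}_1=\tfrac{1}{2}\II^{(0)}_1-\tfrac{\sqrt{-1}}{2}\II^{(0)}_2=1-e$. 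Thus $1=e+(1-e)$ lies in the two-sided $\star$-ideal generated by $e$, so $e$ is a full idempotent.

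For the consequence, I would invoke the standard Morita equivalence: for a $\Z_2$-graded algebra $\Lambda$ and a homogeneous full idempotent $e\in \Lambda_0$, the functors $-\otimes_\Lambda \Lambda e\colon \gr_{\Z_2}\Lambda\to \gr_{\Z_2} e\Lambda e$ and $\Hom_\Lambda(\Lambda e,-)$ (equivalently $-\otimes_{e\Lambda e} e\Lambda$ in the other direction) form a pair of inverse equivalences, since $\Lambda e$ is a $\Z_2$-graded progenerator in $\gr_{\Z_2}\Lambda$ whenever $\Lambda e\Lambda=\Lambda$. Applied to $\Lambda={^\Theta M_2(C_{A^!}(z))}$ and the homogeneous idempotent $e$ of degree $0$, this delivers the claimed equivalence. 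I do not anticipate serious obstacles; the only delicate point is bookkeeping through the twisted multiplication table, and the key simplification is that everything is evaluated at $1\in C_{A^!}(z)$, so the maps $\theta^{(i)}_{jk}$ collapse to Kronecker-delta values.
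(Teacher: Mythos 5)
Your argument is correct, and it takes a genuinely cleaner route than the paper's. Both proofs begin by rewriting $e=\tfrac12\II^{(0)}_1+\tfrac{\sqrt{-1}}2\II^{(0)}_2$ and both exploit the key simplification $\theta^{(0)}(1)=\theta^{(1)}(1)=I_2$ (which follows from $\sigma^!$ being an algebra homomorphism, so $\sigma^!_{ij}(1)=\delta_{ij}$). Where they diverge is in the fullness step: the paper shows $e\star e=e$ and then exhibits, for every $i\in\Z_2$ and $x\in C_{A^!}(z)$, explicit expressions writing $\II^{(i)}_1 x$ and $\II^{(i)}_2 x$ as sums of elements of the form $u\star e\star v$, so that $\Lambda e\Lambda\supseteq\Lambda$ by inspection of a spanning set. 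You instead compute the single conjugate $\II^{(1)}_1\star e\star\II^{(1)}_1=\tfrac12\II^{(0)}_1-\tfrac{\sqrt{-1}}2\II^{(0)}_2=1-e$ and conclude $1=e+(1-e)\in\Lambda e\Lambda$, which is shorter and more conceptual. One point worth making explicit in your write-up: you are using that the identity of ${^\Theta M_2(C_{A^!}(z))}$ is $I_2=\II^{(0)}_1$. This is true here because Theorem \ref{thm: twisted matrix algebra} says the identity is $\sum_{j,s}\gamma_s(\theta^{(0)})^{-1}_{sj}(1)\II^{(0)}_j$, and for the specific $\theta^{(0)}$ of \eqref{eq: definition of theta} one has $\theta^{(0)}(1)=\varphi^{(0)}(1)=I_2$, so this sum collapses to $\sum_j\gamma_j\II^{(0)}_j=I_2$; it would be good to say so rather than take it for granted, since for a general twisting system $\Theta$ the identity need not be $I_2$. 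The Morita-type consequence is handled correctly and is the same in both arguments.
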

\begin{proof}
 Note that $	e=2^{-1}\left(\II^{(0)}_1+\sqrt{-1}\II^{(0)}_2\right)$. By Table \ref{tab: mul of twisted matrix of C_A}, one obtains 
$$
2^{-1}\left(\II^{(i)}_1+\sqrt{-1}\II^{(i)}_2\right)\star e =	2^{-1}\left(\II^{(i)}_1+\sqrt{-1}\II^{(i)}_2\right),
$$
and
\begin{align*}
\II^{(i)}_1x&=2^{-1}\left(\II^{(i)}_1+\sqrt{-1}\II^{(i)}_2\right)\star e \star \II^{(0)}_1x+ 2^{-1}\left(\II^{(i+1)}_1+\sqrt{-1}\II^{(i+1)}_2\right)\star e\star \II^{(1)}_1x,\\
\II^{(i)}_2x&=
2^{-1}\left(\II^{(i+1)}_1+\sqrt{-1}\II^{(i+1)}_2\right)\star e\star \sqrt{-1}\II^{(1)}_1x-2^{-1}\left(\II^{(i)}_1+\sqrt{-1}\II^{(i)}_2\right)\star e \star \sqrt{-1}\II^{(0)}_1x,
\end{align*}
for any $i\in \Z_2$ and $x\in C_{A^!}(z)$. Hence,  $e$ is a full idempotent.
\end{proof}

To understand the multiplication of ${^\Theta M_2(C_{A^!}(z))}$ further more, we introduce four linear transformations of $C_{A^!}(z)$: 
$$
\begin{array}{ll}
\Xi_1=2^{-1}(\id+\sqrt{-1}\theta^{(0)}_{12}+\theta^{(0)}_{22}), & \Xi_2=2^{-1}(\id+\sqrt{-1}\theta^{(0)}_{12}-\theta^{(0)}_{22}),\\
\Phi_{1}=2^{-1}(\theta^{(1)}_{11}-\sqrt{-1}\theta^{(1)}_{12}+\sqrt{-1}\theta^{(1)}_{21}+\theta^{(1)}_{22}),&
	\Phi_{2}=2^{-1}(\theta^{(1)}_{11}-\sqrt{-1}\theta^{(1)}_{12}-\sqrt{-1}\theta^{(1)}_{21}-\theta^{(1)}_{22}).
\end{array}
$$
We show some parts of the multiplication of ${^\Theta M_2(C_{A^!}(z))}$ in the usual matrix forms involving the linear transforms listed above: 
\begin{equation}\label{eq: mul of twisted matrix}
\begin{array}{ll}
	\begin{pmatrix}
		a  &  0\\
		0  &   0
	\end{pmatrix}*
	\begin{pmatrix}
		a'  &  0\\
		0  &   0
	\end{pmatrix}=
	\begin{pmatrix}
		\Xi_1(a)a'  &  0\\
		0  &   0
	\end{pmatrix},
	&
	\begin{pmatrix}
		a  &  0\\
		0  &   0
	\end{pmatrix}*
	\begin{pmatrix}
		0  &  a'\\
		0  &   0
	\end{pmatrix}=
	\begin{pmatrix}
		0 &  \Phi_1(a)a'\\
		0  &   0
	\end{pmatrix},
	\\\\
	\begin{pmatrix}
		0  &  a\\
		0  &   0
	\end{pmatrix}*
	\begin{pmatrix}
		a'  &  0\\
		0  &   0
	\end{pmatrix}=
	\begin{pmatrix}
		0 &  \Xi_2(a)a'\\
		0  &   0
	\end{pmatrix},
	&
	\begin{pmatrix}
		0  &  a\\
		0  &   0
	\end{pmatrix}*
	\begin{pmatrix}
		0  &  a'\\
		0  &   0
	\end{pmatrix}=
	\begin{pmatrix}
		\Phi_2(a)a' &  0\\
		0  &   0
	\end{pmatrix}.
\end{array}
\end{equation}
Here are some useful properties of those four liner transformations.
\begin{lemma}\label{lem: properties of Xi Phi}
	\begin{enumerate}
		\item $\Xi_1^2=\Xi_1, \Xi_2^2=\Xi_2$. 
		\item $\Xi_1(ab)=a\Xi_2(b)+\Xi_1(a)\theta^{(0)}_{22}(b)=a\Xi_1(b)+\Xi_1(a)\theta^{(0)}_{22}(b)-a\theta^{(0)}_{22}(b)$ for any $a,b\in C_{A^!}(z)$.
		\item $\Xi_2(ab)=a\Xi_2(b)+\Xi_2(a)\theta^{(0)}_{22}(b)=a\Xi_1(b)+\Xi_2(a)\theta^{(0)}_{22}(b)-a\theta^{(0)}_{22}(b)$  for any $a,b\in C_{A^!}(z)$.

		\item $\Xi_1\Phi_{1}=\theta^{(0)}_{22} \Phi_{1}$, 
 $\Phi_{1}\Xi_1=\Phi_{1},$ 
  $\Phi_{1}\Xi_2=\Phi_{1}(\sqrt{-1}\theta^{(0)}_{12})$,
  $\Xi_2\Phi_{1}=0.$

		\item $\Xi_1\Phi_{2}=0$, $\Phi_{2}\Xi_1=\Phi_{2}(\sqrt{-1}\theta^{(0)}_{12})$, $\Phi_{2}\Xi_{2}=\Phi_{2}$,
 $\Xi_2\Phi_{2}=-\theta^{(0)}_{22}\Phi_{2}$.
		 \item $\Phi_{1}(\Xi_1(a)b)=\Phi_{1}(a)\Phi_{1}(b)$, $\Phi_{1}(a\Xi_1(b))=\Phi_{1}(a)\Phi_{1}((\Xi_1(b))$, $\Phi_{1}(\Phi_{2}(a)\Xi_2(b))=\Xi_2(a)\Phi_{2}(b)$ for any $a,b\in C_{A^!}(z)$.

		\item $\Phi_{2}(\Xi_2(a)b)=\Phi_{2}(a)\Phi_{1}(b)$, 		
		$\Phi_{2}(\Phi_{1}(a)\Xi_2(b))=\Xi_{1}(a)\Phi_{2}(b)$ for any $a,b\in C_{A^!}(z)$. 				
				\item
				$(\theta^{(1)}_{11}-\sqrt{-1}\theta^{(1)}_{21})\Phi_{1}=(\theta^{(1)}_{22}+\sqrt{-1}\theta^{(1)}_{12})\Phi_{1}=\Xi_1$.
    \item $(\theta^{(1)}_{11}+\sqrt{-1}\theta^{(1)}_{21})\Phi_{2}=(\sqrt{-1}\theta^{(1)}_{12}-\theta^{(1)}_{22})\Phi_{2}=\Xi_2$.
	\end{enumerate}
\end{lemma}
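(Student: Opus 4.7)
The plan is to verify each of the nine identities by direct expansion from the definitions, exploiting the multiplicative and quadratic properties of the components $\theta^{(0)}_{ij},\theta^{(1)}_{ij}$ already established. Since $\theta^{(0)}$ is an upper triangular algebra homomorphism, its diagonal entry $\theta^{(0)}_{22}$ is itself an algebra homomorphism and $\theta^{(0)}_{12}$ is a twisted derivation satisfying $\theta^{(0)}_{12}(ab)=a\theta^{(0)}_{12}(b)+\theta^{(0)}_{12}(a)\theta^{(0)}_{22}(b)$, as recorded in Lemma \ref{lem: properties of pair of theta}(1,2). Combined with the anti-commutation $\theta^{(0)}_{12}\theta^{(0)}_{22}+\theta^{(0)}_{22}\theta^{(0)}_{12}=0$ and the quadratic identity $(\theta^{(0)}_{22})^2-(\theta^{(0)}_{12})^2=\id$ from Lemma \ref{lem: properties of pair of theta}(3,4), this is the toolkit for (1)--(3).

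For (1), expand $4\Xi_1^2$ using the definition: the two cross terms $\sqrt{-1}(\theta^{(0)}_{12}\theta^{(0)}_{22}+\theta^{(0)}_{22}\theta^{(0)}_{12})$ cancel by the anti-commutation, and $(\theta^{(0)}_{22})^2-(\theta^{(0)}_{12})^2$ contributes the extra $\id$ needed to collapse $4\Xi_1^2$ to $4\Xi_1$; the case of $\Xi_2$ is identical up to one sign flip. For (2) and (3), apply the twisted Leibniz rule and the multiplicativity of $\theta^{(0)}_{22}$ to $\Xi_i(ab)$ and rearrange; the two equivalent forms in each item differ only by the trivial relation $\Xi_1(b)-\Xi_2(b)=\theta^{(0)}_{22}(b)$.

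For items (4)--(9), the key additional input is that $\theta^{(1)}=\sigma^!\circ\xi_{-1}$ is an algebra homomorphism, so entrywise expansion of $\theta^{(1)}(ab)=\theta^{(1)}(a)\theta^{(1)}(b)$ produces quadratic relations among the $\theta^{(1)}_{ij}$; meanwhile the $t$-invertibility of $\sigma^!$ from Corollary \ref{cor: properties for dual of sigma}(6) supplies the orthogonality relations $\sum_k \varphi^!_{ki}\sigma^!_{kj}=\delta_{ij}$ and $\sum_k \sigma^!_{ik}\varphi^!_{jk}=\delta_{ij}$ needed for (8) and (9). Rather than grinding through term-by-term, the cleanest route for the mixed composition identities (4)--(7) is to read them off the explicit multiplication rules \eqref{eq: mul of twisted matrix} via associativity of $\star$: computing $(X\star Y)\star Z=X\star(Y\star Z)$ for matrix triples drawn from the four column/row patterns in that display, with one factor eventually set to $1$ (using the calculations $\theta^{(0)}_{22}(1)=1$, $\theta^{(0)}_{12}(1)=0$, $\theta^{(1)}(1)=I_2$, and the consequent values $\Xi_1(1)=\Phi_1(1)=1$, $\Xi_2(1)=\Phi_2(1)=0$), forces exactly the eight relations in (4)--(5) and the further product relations in (6)--(7). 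Finally, (8) and (9) reduce, after combining $\Phi_i$ with the orthogonality identities for $\sigma^!$, to the quadratic relations of Corollary \ref{cor: properties for dual of sigma}(1)--(5).

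The main obstacle is bookkeeping rather than any conceptual difficulty: each $\Xi_i,\Phi_j$ is a sum of several of the $\theta^{(i)}_{jj'}$, so even using the associativity shortcut, expanding a triple product produces many terms that must be regrouped via the Leibniz/anti-commutation relations. The decisive simplification in every step is the pair of identities $\Xi_2(1)=\Phi_2(1)=0$, which turn several triple products into one-sided equations from which the target formulas read off directly.
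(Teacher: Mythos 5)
Your plan is correct and matches the paper's intent: the paper's own proof consists literally of the one-line citation of Corollary \ref{cor: properties for dual of sigma} and Lemma \ref{lem: properties of pair of theta}, and your more detailed strategy fleshes out exactly that direct verification. The associativity shortcut you propose for (4)--(7) is a genuine labor-saver and legitimate, since $\star$ is already known to be associative (Theorem \ref{thm: +1case, clifford def is twisted matrix algebra} via Theorem \ref{thm: twisted matrix algebra}) and the rules \eqref{eq: mul of twisted matrix} precede this lemma; for instance $\Xi_2\Phi_1=0$ drops out of the triple $\bigl(\begin{smallmatrix}a&0\\0&0\end{smallmatrix}\bigr)\star\bigl(\begin{smallmatrix}0&1\\0&0\end{smallmatrix}\bigr)\star\bigl(\begin{smallmatrix}c&0\\0&0\end{smallmatrix}\bigr)$, and $\Phi_1(\Xi_1(a)b)=\Phi_1(a)\Phi_1(b)$ from $\bigl(\begin{smallmatrix}a&0\\0&0\end{smallmatrix}\bigr)\star\bigl(\begin{smallmatrix}b&0\\0&0\end{smallmatrix}\bigr)\star\bigl(\begin{smallmatrix}0&1\\0&0\end{smallmatrix}\bigr)$.

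Two small corrections. First, for (8)--(9) you do not need the $t$-invertibility of Corollary \ref{cor: properties for dual of sigma}(6): writing $\theta^{(1)}_{ij}=\sigma^!_{ij}\xi_{-1}$, using $\xi_{-1}^2=\id$ and that $\xi_{-1}$ commutes with each $\sigma^!_{ij}$, and invoking only Corollary \ref{cor: properties for dual of sigma}(1)--(5) already yields $\Xi_1,\Xi_2$, consistent with the reduction you state in your final sentence. Second, the associativity trick alone does not cover every item in (4)--(7): the middle identity of (6), $\Phi_1(a\Xi_1(b))=\Phi_1(a)\Phi_1(\Xi_1(b))$, does not come from a single triple product with one factor set to $1$, and still requires a direct expansion (or a separate argument that $\Phi_1\bigl((a-\Xi_1(a))\Xi_1(b)\bigr)=0$); likewise compositions such as $\Xi_1\Phi_1=\theta^{(0)}_{22}\Phi_1$ and $\Phi_1\Xi_2=\sqrt{-1}\Phi_1\theta^{(0)}_{12}$ come only after combining an associativity-derived relation (e.g.\ $\Xi_2\Phi_1=0$, $\Phi_1\Xi_1=\Phi_1$) with the linear identities $\Xi_1-\Xi_2=\theta^{(0)}_{22}$ and $\Xi_1+\Xi_2=\id+\sqrt{-1}\theta^{(0)}_{12}$, a step your sketch leaves implicit but which should be spelled out.
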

\begin{proof}
All follow by Corollary \ref{cor: properties for dual of sigma} and Lemma \ref{lem: properties of pair of theta}.
 \end{proof}

By Lemma \ref{lem: properties of Xi Phi}(1), we write $S$ for the $\mathbb{Z}_2$-graded eigenspace for the eigenvalue $1$ of $\Xi_1$ and $M$ for the $\mathbb{Z}_2$-graded eigenspace for the eigenvalue $1$ of $\Xi_2$.

\begin{lemma}
	\begin{enumerate}
		\item The $\mathbb{Z}_2$-graded eigenspace $S$ is a $\mathbb{Z}_2$-graded subalgebra of $C_{A^!}(z)$ and $S\cong {\Phi_{1}}(S)$ as $\mathbb{Z}_2$-graded algebras.

		\item   The $\mathbb{Z}_2$-graded eigenspace $M$ is a $\mathbb{Z}_2$-graded $S$-bimodule with the $S$-action defined by
  $$
  s\cdot m\cdot s'=\Phi_{1}(s)ms',
  $$
  for any $s,s'\in S$ and $m\in M$.

		\item For any $m,m'\in M$, $\Phi_2(m)m'\in S$. 
		
		\item The map  $$
		\begin{array}{cclc}
			\psi:& M(1)\otimes_S M(1)&\to & S\\
			~&m\otimes m'&\mapsto & \Phi_{2}(m)m',
		\end{array}
		$$
		is a $\Z_2$-graded $S$-bimodule homomorphism satisfying $m\psi(m'\otimes m'')=\psi(m\otimes m')\cdot m''$ for any homogeneous elements  $m,m',m''\in M$.

	\end{enumerate}
\end{lemma}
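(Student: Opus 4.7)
The plan is to verify the four claims by systematic bookkeeping with the identities among $\Xi_1,\Xi_2,\Phi_1,\Phi_2$ collected in Lemma \ref{lem: properties of Xi Phi}, combined with the Leibniz-type formulas of Lemma \ref{lem: properties of Xi Phi}(2)--(3). Throughout, I will use that $\sigma^!$ is a unital $\Z_2$-graded algebra homomorphism, so $\sigma^!_{ij}(1)=\delta_{ij}$, which gives $\theta^{(0)}(1)=\theta^{(1)}(1)=I_2$ and hence $\Xi_1(1)=1$, $\Xi_2(1)=0$, $\Phi_1(1)=1$, $\Phi_2(1)=0$.

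For (1), given $s,s'\in S$, I would apply the second form of Lemma \ref{lem: properties of Xi Phi}(2):
$$\Xi_1(ss')=s\Xi_1(s')+\Xi_1(s)\theta^{(0)}_{22}(s')-s\theta^{(0)}_{22}(s').$$
Substituting $\Xi_1(s)=s$ and $\Xi_1(s')=s'$ makes the two $\theta^{(0)}_{22}$-terms cancel, so $\Xi_1(ss')=ss'$ and $S$ is a $\Z_2$-graded subalgebra. The restriction $\Phi_1\vert_S$ is a $\Z_2$-graded algebra homomorphism: Lemma \ref{lem: properties of Xi Phi}(6) gives $\Phi_1(ss')=\Phi_1(\Xi_1(s)s')=\Phi_1(s)\Phi_1(s')$, and unitality is recorded above. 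Injectivity follows from Lemma \ref{lem: properties of Xi Phi}(8): $(\theta^{(1)}_{11}-\sqrt{-1}\theta^{(1)}_{21})\Phi_1=\Xi_1$, which acts as the identity on $S$. Surjectivity onto $\Phi_1(S)$ is tautological, so $S\cong \Phi_1(S)$ as $\Z_2$-graded algebras.

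For (2), I verify that the proposed action lands in $M$. Using Lemma \ref{lem: properties of Xi Phi}(3) in the first form and $\Xi_2\Phi_1=0$ from Lemma \ref{lem: properties of Xi Phi}(4), I get $\Xi_2(\Phi_1(s)m)=\Phi_1(s)\Xi_2(m)+\Xi_2(\Phi_1(s))\theta^{(0)}_{22}(m)=\Phi_1(s)m$, so $s\cdot m\in M$. For the right action, the second form of Lemma \ref{lem: properties of Xi Phi}(3) and $\Xi_1(s')=s'$ give $\Xi_2(ms')=m\Xi_1(s')+\Xi_2(m)\theta^{(0)}_{22}(s')-m\theta^{(0)}_{22}(s')=ms'$. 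All bimodule axioms reduce to associativity in $C_{A^!}(z)$ combined with the homomorphism property $\Phi_1(ss')=\Phi_1(s)\Phi_1(s')$ established in (1). For (3), applying Lemma \ref{lem: properties of Xi Phi}(2) with $a=\Phi_2(m)$, $b=m'$ and then $\Xi_1\Phi_2=0$ from Lemma \ref{lem: properties of Xi Phi}(5) yields $\Xi_1(\Phi_2(m)m')=\Phi_2(m)\Xi_2(m')+\Xi_1(\Phi_2(m))\theta^{(0)}_{22}(m')=\Phi_2(m)m'$, so indeed $\Phi_2(m)m'\in S$.

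For (4), gradedness follows from $\theta^{(1)}=\sigma^!\circ\xi_{-1}$ having $\Z_2$-degree $0$, so $\Phi_2$ is degree-preserving, and the shift $(1)$ is chosen precisely so that $\psi\bigl((M(1))_i\otimes (M(1))_j\bigr)\subseteq S_{i+j}$. Left $S$-linearity follows from Lemma \ref{lem: properties of Xi Phi}(7) applied to $a=s$, $b=m$: $\Phi_2(\Phi_1(s)\Xi_2(m))=\Xi_1(s)\Phi_2(m)$ simplifies to $\Phi_2(s\cdot m)=s\Phi_2(m)$. $S$-balance follows from the other identity in Lemma \ref{lem: properties of Xi Phi}(7), namely $\Phi_2(\Xi_2(m)s)=\Phi_2(m)\Phi_1(s)$, which gives $\Phi_2(ms)m'=\Phi_2(m)\Phi_1(s)m'=\Phi_2(m)(s\cdot m')$; right $S$-linearity is immediate from associativity. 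For the crucial identity $m\psi(m'\otimes m'')=\psi(m\otimes m')\cdot m''$, the right-hand side is $\Phi_1(\Phi_2(m)m')m''$, and the third identity of Lemma \ref{lem: properties of Xi Phi}(6), $\Phi_1(\Phi_2(a)\Xi_2(b))=\Xi_2(a)\Phi_2(b)$, specialized at $a=m$, $b=m'$, gives $\Phi_1(\Phi_2(m)m')=m\Phi_2(m')$; multiplying by $m''$ on the right finishes the verification.

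The calculations are routine once the correct identity from Lemma \ref{lem: properties of Xi Phi} is matched with each claim. The subtlest point is the final Frobenius-type identity in (4), where one must recognize that the third formula in Lemma \ref{lem: properties of Xi Phi}(6) is essentially a statement that $\Phi_1\circ\Phi_2$ reconstructs left multiplication on $M$; this is exactly the ingredient needed to make the semi-trivial extension $S\ltimes_\psi M(1)$ well defined and to realize $C_{B^!}(z+y_1^2+y_2^2)$ as such an extension in the sequel.
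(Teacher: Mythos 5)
Your proof is correct and follows essentially the same route as the paper: part (1) via Lemma 4.6(2),(6),(8), part (2) via Lemma 4.6(3),(4), part (3) via Lemma 4.6(2),(5), and part (4) via the two identities in Lemma 4.6(7). The one place you go beyond the paper is that you explicitly verify the Frobenius-type identity $m\psi(m'\otimes m'')=\psi(m\otimes m')\cdot m''$ using the third identity of Lemma 4.6(6), whereas the paper's proof of (4) only checks well-definedness and the bimodule homomorphism property; this extra step is a welcome completion rather than a different approach.
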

\begin{proof}
(1) Clearly $\Xi_1(1)=1\in S$. By Lemma \ref{lem: properties of Xi Phi}(2), for any $s,s'\in S$,
 \begin{align*}
 	\Xi_1(ss')=s\Xi_1(s')+\Xi_1(s)\theta^{(0)}_{22}(s')-s\theta^{(0)}_{22}(s')=ss'.
 \end{align*}
Hence, $S$ is a $\Z_2$-graded subalgebra of $C_{A^!}(z)$.

By Lemma \ref{lem: properties of Xi Phi}(6), the restriction map ${\Phi_{1}}_{\mid S}:S\to \Phi_{1}(S)$ is a surjective $\Z_2$-graded algebra homomorphism. If $s\in \Ker{\Phi_{1}}_{\mid S}$, then $s=\Xi_1(s)=(\theta^{(1)}_{11}-\sqrt{-1}\theta^{(1)}_{21})\Phi_{1}(s)=0$ by Lemma \ref{lem: properties of Xi Phi}(8). So ${\Phi_{1}}_{\mid S}$ is an isomorphism.

(2) By Lemma \ref{lem: properties of Xi Phi}(3,4),
\begin{align*}
\Xi_2(\Phi_{1}(s) ms')=&\Phi_{1}(s)\Xi_2(ms')+\Xi_2(\Phi_{1}(s))\theta^{(0)}_{22}(ms')\\
	=&\Phi_{1}(s)\left(m\Xi_2(s')+\Xi_2(m)\theta^{(0)}_{22}(s')-m\theta^{(0)}_{22}(s')\right)\\	
	=&\Phi_{1}(s)ms',
\end{align*}
 for any $s,s'\in S$ and $m\in M$. It implies that $M$ is a $(\Phi_1(S),S)$-bimodule. By (1), the result follows.
 
(3) By Lemma \ref{lem: properties of Xi Phi}(2,5), $
 	\Xi_1(\Phi_{2}(m)m')=\Phi_{2}(m)\Xi_2(m')+\Xi_1(\Phi_{2}(m))\theta^{(0)}_{22}(m')=\Phi_{2}(m)m'
$,  for any $m,m'\in M$.

(4) For any $s\in S$ and $m,m'\in M(1)$, 
$$\psi(ms\otimes m')=\Phi_2(\Xi_2(m)s)m'=\Phi_2(m)\Phi_1(s)m'=\Phi_2(m)(s\cdot m')=\psi(m\otimes s\cdot m'),
$$
by Lemma \ref{lem: properties of Xi Phi}(7). Combining with (3), $\psi$ is well defined.

For any $s,s'\in S$ and $m,m'\in M(1)$, 
$$
	\psi(s\cdot m\otimes m'\cdot s')=\Phi_{2}(\Phi_1(s) \Xi_2(m))m's'
	=\Xi_{1}(s) \Phi_2(m)m's'=s\psi(m\otimes m')s',
$$
by Lemma \ref{lem: properties of Xi Phi}(7).
\end{proof}

The last lemma allows us to construct a semi-trivial extension 
$$\Lambda=S\ltimes_{\psi}{M(1)},$$
which is a $\Z_2\times \Z_2$-graded algebra and also a $\Z_2$-graded algebra by forgetting the first $\Z_2$-grading (see  Remark \ref{rem: semi-trivial extension} (3)). In general, $\Lambda$ is not strongly $\Z_2$-graded (see Example \ref{ex: +1case two setp Ore extensions} below).

\begin{theorem}\label{thm: +1case equivalence to Lambda}
	Let $B=A_{\{1,0\}}[y_1,y_2;\sigma]$ be a $\Z$-graded double Ore extension of a noetherian Koszul Artin-Schelter regular algebra $A$ with $\deg y_1=\deg y_2=1$ and $z\in A_2$ be a nonzero regular central element of $A$. Suppose $B$ is noetherian and  $z+y_1^2+y_2^2$ is a central element of $B$. Then there are equivalences of triangulated categories
	$$\umcm(B/(z+y_1^2+y_2^2))\cong D^b(\gr_{\Z_{2}}{^\Theta M_2(C_{A^!}(z))})\cong D^b(\module {^\Theta M_2(C_{A^!}(z))}_0)\cong D^b(\gr_{\Z_2} \Lambda).$$
\end{theorem}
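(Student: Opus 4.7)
The plan is to chain together three identifications. Since $B$ is a noetherian Koszul Artin-Schelter regular algebra (standard for graded double Ore extensions) and $z+y_1^2+y_2^2$ is a regular central element by hypothesis, $B/(z+y_1^2+y_2^2)$ is a noncommutative quadric hypersurface. Applying Theorem \ref{thm: properties of clifford def}(3) and then transporting along the $\Z_2$-graded algebra isomorphism of Theorem \ref{thm: +1case, clifford def is twisted matrix algebra}(2) immediately produces
\[
\umcm(B/(z+y_1^2+y_2^2))\cong D^b(\gr_{\Z_2}{^\Theta M_2(C_{A^!}(z))})\cong D^b(\module {^\Theta M_2(C_{A^!}(z))}_0),
\]
which is the first pair of equivalences.

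Write $T={^\Theta M_2(C_{A^!}(z))}$. For the remaining equivalence, the idea is to combine the Morita-type reduction $\gr_{\Z_2} T\cong \gr_{\Z_2}(eTe)$ given by the full idempotent $e=\begin{pmatrix}1 & 0\\0 & 0\end{pmatrix}$ of Proposition \ref{prop: e is a full idempotent} with a direct algebra isomorphism $eTe\cong \Lambda$. Together these yield
\[
D^b(\module T_0)\cong D^b(\gr_{\Z_2} T)\cong D^b(\gr_{\Z_2}(eTe))\cong D^b(\gr_{\Z_2}\Lambda),
\]
closing the chain.

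The main calculation is the $\Z_2$-graded algebra isomorphism $eTe\cong \Lambda$. Using the multiplication formulas in \eqref{eq: mul of twisted matrix}, I would first verify that a matrix $\begin{pmatrix}a & b\\0 & 0\end{pmatrix}$ lies in $eTe$ precisely when $\Xi_1(a)=a$ and $\Xi_2(b)=b$, i.e., when $a\in S$ and $b\in M$; this identifies $eTe$ with $S\oplus M$ as a graded vector space. The four formulas in \eqref{eq: mul of twisted matrix} then translate into (i) the product $\Xi_1(s)s'=ss'$ in $S$, (ii) the left $S$-action $s\cdot m=\Phi_1(s)m$ on $M$, (iii) the right $S$-action $m\cdot s=\Xi_2(m)s=ms$, and (iv) the pairing $\psi(m\otimes m')=\Phi_2(m)m'$ into $S$. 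These are exactly the structure constants of the semi-trivial extension $\Lambda=S\ltimes_\psi M(1)$, and one concludes with a check of associativity/identity using Lemma \ref{lem: properties of Xi Phi}.

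The main obstacle I anticipate is not the multiplicative matching, which is routine once \eqref{eq: mul of twisted matrix} is used, but rather bookkeeping the two $\Z_2$-gradings. The total-degree grading on $T$ gives $(eTe)_0=S_0\oplus M_1$ and $(eTe)_1=S_1\oplus M_0$, while the forgetful $\Z_2$-grading on the $\Z_2\times\Z_2$-graded algebra $\Lambda=S\ltimes_\psi M(1)$ yields $\Lambda_g=S_g\oplus M(1)_g=S_g\oplus M_{g+1}$. The shift $M(1)$ is exactly what makes the two gradings coincide, and one must keep it consistent through the Morita step and the strong-grading identification $\gr_{\Z_2} T\cong \module T_0$ coming from Theorem \ref{thm: properties of clifford def}(1).
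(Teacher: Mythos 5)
Your proposal matches the paper's proof step for step: Theorem \ref{thm: properties of clifford def}(3) combined with Theorem \ref{thm: +1case, clifford def is twisted matrix algebra}(2) for the first two equivalences, then the full idempotent $e$ from Proposition \ref{prop: e is a full idempotent} plus the identification $e\,{}^\Theta M_2(C_{A^!}(z))\,e\cong\Lambda$ via the formulas in \eqref{eq: mul of twisted matrix}. Your grading bookkeeping, including the role of the shift $M(1)$ in reconciling the total-degree grading of $eTe$ with the forgetful grading of $\Lambda$, also agrees with the paper.
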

\begin{proof}
By Theorem \ref{thm: +1case, clifford def is twisted matrix algebra} and Theorem \ref{thm: properties of clifford def}, one obtains that there are equivalences of triangulated categories
$$\umcm(B/(z+y_1^2+y_2^2))\cong D^b(\gr_{\Z_{2}}{^\Theta M_2(C_{A^!}(z))})\cong D^b(\module {^\Theta M_2(C_{A^!}(z))}_0).$$

By Proposition \ref{prop: e is a full idempotent}, $\gr_{\Z_{2}}{^\Theta M_2(C_{A^!}(z))}$ is equivalent to $\gr_{\Z_2} e\left({^\Theta M_2(C_{A^!}(z))}\right)e$. By Table \ref{tab: mul of twisted matrix of C_A}, one obtains
\begin{align*}
    e\star\II^{(0)}_1x\star e&=2^{-1}\left(
	\II^{(0)}_1\Xi_1(x)+\sqrt{-1}\II^{(0)}_2\Xi_1(x)
	\right),\\
	e\star\II^{(0)}_2x\star e&=-2^{-1}\sqrt{-1}\left(\II^{(0)}_1\Xi_1(x)+\sqrt{-1}\II^{(0)}_2\Xi_1(x)\right),\\
	e\star\II^{(1)}_1x\star e&=2^{-1}\left(
	\II^{(1)}_1\Xi_2(x)-\sqrt{-1}\II^{(0)}_2\Xi_2(x)\right),\\
	e\star\II^{(1)}_2x\star e&=2^{-1}\sqrt{-1}\left(
	\II^{(1)}_1\Xi_2(x)-\sqrt{-1}\II^{(0)}_2\Xi_2(x)\right),
\end{align*}
for any $x\in C_{A^!}(z)$. It implies  
$$
e\left({^\Theta M_2(C_{A^!}(z))}\right)e
=\left\{\begin{pmatrix}
	\Xi_1(x)& \Xi_2(x')\\
	0 &0
\end{pmatrix}\mid x,x'\in C_{A^!}(z)
\right\}=\begin{pmatrix}
	S& M\\
	0 &0
\end{pmatrix}. 
$$
Note that $e\left({^\Theta M_2(C_{A^!}(z))}\right)_0e=\begin{pmatrix}
	S_0& M_1\\
	0 &0
\end{pmatrix}$ and $e\left({^\Theta M_2(C_{A^!}(z))}\right)_1e=\begin{pmatrix}
S_1& M_0\\
0 &0
\end{pmatrix}$. By \eqref{eq: mul of twisted matrix}, we have $e\left({^\Theta M_2(C_{A^!}(z))}\right)e\cong \Lambda$ as  $\Z_2$-graded algebras. 

Hence, there is an equivalence of triangulated categories
\begin{equation*}
D^b(\gr_{\Z_{2}}{^\Theta M_2(C_{A^!}(z))})\cong D^b(\gr_{\Z_2} \Lambda).\qedhere
\end{equation*}
\end{proof}

\begin{example}\label{ex: +1case two setp Ore extensions}
Let $A$ be a noetherian Koszul Artin-Schelter regular algebra. Let $\sigma_1$ and $\sigma_2$ be graded automorphisms of $A$ satisfying
$$
\sigma_1^2=\sigma_2^2=\id_A,\qquad\sigma_{1}\sigma_2=\sigma_2\sigma_1.
$$
Naturally, $\sigma_2$ can be extended to  be a graded automorphism of $A[y_1;\sigma_1]$ which preserves $y_1$, still denoted by $\sigma_2$. Then we have an Ore extension $A[y_1;\sigma_1][y_2;\sigma_2]$ is isomorphic to the double Ore extension  $B=A_{\{1,0\}}[y_1,y_2;\sigma]$, where
$$
\sigma=
\begin{pmatrix}
	\sigma_1 & 0\\
	0 &\sigma_2
\end{pmatrix}
:A\to M_2(A).
$$

Let $z\in A_2$ be a nonzero regular central element of $A$ such that $\sigma_1(z)=\sigma_2(z)=z$. By Lemma \ref{lem: +1case condition for central element}, $z+y_1^2+y_2^2$ is a central element of $B$. In this case, the linear maps in the twisting system are 
\begin{equation*}
	\theta^{(0)}=\begin{pmatrix}
		\id & 0\\
		0& \sigma^!_{22}\sigma^!_{11}
	\end{pmatrix},\quad
	\theta^{(1)}=\begin{pmatrix}
		\sigma_1^! & 0\\
		0& \sigma^!_{2}
	\end{pmatrix}\circ\xi_{-1}
,
\end{equation*}
and the liner transformations of $C_{A^!}(z)$ are
$$
\begin{array}{llll}
	\Xi_1=2^{-1}(\id+\sigma^!_{2}\sigma^!_{1}), & \Xi_2=2^{-1}(\id-\sigma^!_{2}\sigma^!_{1}),&
	\Phi_{1}=2^{-1}(\sigma^!_{1}+\sigma^!_{2})\xi_{-1},&
	\Phi_{2}=2^{-1}(\sigma^!_{1}-\sigma^!_{2})\xi_{-1}.
\end{array}
$$
So 
$$
S=\left\{x\in C_{A^!}(z)\mid \sigma^!_{1}(x)=\sigma^!_{2}(x)\right\},\qquad
M=\left\{x\in C_{A^!}(z)\mid \sigma^!_{1}(x)=-\sigma^!_{2}(x)\right\}, 
$$
and $
\Lambda=S\ltimes_{\psi}{M(1)}
$ with multiplication
$$
(s,m)\odot(s',m')=(ss'+\sigma^!_{1}\xi_{-1}(m)m',\sigma^!_{1}\xi_{-1}(s)m'+ms'),
$$
for any homogeneous elements $s,s'\in S$ and $m,m'\in M$. By Theorem \ref{thm: +1case equivalence to Lambda}, there is an equivalence of triangulated categories
	$$\umcm(B/(z+y_1^2+y_2^2)) \cong D^b(\gr_{\Z_2} \Lambda).$$
There are some special cases.
\begin{enumerate}
	\item Suppose $\sigma_1=\sigma_2=\sigma$. Then $
	S=C_{A^!}(z),
	M=0 
	$, and $\Lambda=C_{A^!}(z)$. Hence, there are equivalences of triangulated categories 
	$$\umcm(A[y_1;\sigma][y_2;\sigma]/(z+y_1^2+y_2^2)) \cong D^b(\gr_{\Z_2} C_{A^!}(z))\cong \umcm(A/(z)).$$
	
	It is exactly the noncommutative Kn\"orrer's periodicity theorem for noncommutative quadric hypersurfaces (see \cite[Theorem 1.3]{MU2}, or \cite[Theorem 0.4]{HY}).
	\item Suppose $\sigma_1=\xi_{-1}$ and $\sigma_2=\id$. Then $
	S_0=C_{A^!}(z)_0,S_1=0, M_0=0,M_1=C_{A^!}(z)_1
	$. So $\Z_2$-graded algebra $\Lambda$ is concentrated in degree $0$ with $\Lambda_0= C_{A^!}(z)$. Hence, there is an equivalence of triangulated categories  
	$$
	\umcm(A[y_1;\xi_{-1}][y_2]/(z+y_1^2+y_2^2)) \cong D^b(\module C_{A^!}(z))^{\times 2}.
	$$
	It is equivalent to \cite[Theorem 1.4]{MU2} (up to Zhang-twist) which provides a tool called two point reduction for computing a classes of noncommutative quadric hypersurfaces defined by $(\pm 1)$-skew polynomial algebras (see \cite[Lemma 6.18]{MU2}). We will also give another approach to \cite[Theorem 1.4]{MU2} in Example \ref{ex: -1case}.
\end{enumerate}
\end{example}

\begin{example}
Let $A=k_{-1}[x_1,x_2]$ and the nonzero regular central element $z=x_1^2+x_2^2$. Let $B=A_{\{1,0\}}[y_1,y_2;\sigma]$ be a $\Z$-graded double Ore extension of $A$ where $\sigma=(\sigma_{ij}):A\to M_2(A)$ satisfies
$$
\begin{array}{llll}
    \sigma_{11}(x_1)=hx_1, &  \sigma_{12}(x_1)=hx_2, & \sigma_{21}(x_1)=hx_2,& \sigma_{22}(x_1)=-hx_1, \\
        \sigma_{11}(x_2)=hx_2, &  \sigma_{12}(x_2)=hx_1, &  
    \sigma_{21}(x_2)=hx_1,& \sigma_{22}(x_1)=-hx_2,
\end{array}
$$
where $2h^2=1$. It is a special case of the class $\mathbb{Z}$ with $f=1$ in \cite{ZZ2}. By \cite[Proposition 5.16]{ZZ2}, $B$ is noetherian. It is not hard to obtain that $\sigma$ satisfies Lemma \ref{lem: +1case condition for central element}, so $x_1^2+x_2^2+y_1^2+y_2^2$ is a central element of $B$. In this case, the linear maps in the twisting system of $M_2(C_{A^!}(z))$ are
\begin{equation*}
\theta^{(0)}=\begin{pmatrix}
		\id & 0\\
		0& \xi_{-1}
	\end{pmatrix},\quad
	\theta^{(1)}=\begin{pmatrix}
		\sigma_{11}^! & \sigma_{12}^!\\
		\sigma^!_{21}& \sigma^!_{22}
	\end{pmatrix}\circ\xi_{-1},
\end{equation*}
and two liner transformations of $C_{A^!}(z)$ are
$$
	\Xi_1=2^{-1}(\id+\xi_{-1}), \qquad \Xi_2=2^{-1}(\id-\xi_{-1}).
$$
So the $\Z_2$-graded algebra $S$ equals $C_{A^!}(z)_0$ concentrated in degree $0$ and the $\Z_2$-graded $S$-module $M$ equals $C_{A^!}(z)_1$ concentrated in degree $1$. The other linear transformations are ${\Phi_1}_{\mid S}=\id_S$ and $\Phi_2$ mapping
$$	x^*_1\mapsto -2h(x_1^*-\sqrt{-1}x_2^*),\qquad x^*_2\mapsto 2\sqrt{-1}h(x_1^*+\sqrt{-1}x_2^*).$$

Then the semi-trivial extension $
\Lambda=S\ltimes_{\psi}{M(1)}$ is a $\Z_2\times\Z_2$-graded algebra where $\Lambda_{(0,0)}=C_{A^!}(z)_0$, $\Lambda_{(1,0)}=C_{A^!}(z)_1$ and  $\Lambda_{(0,1)}=\Lambda_{(1,1)}=0$ and
with multiplication
$$
(s,m)\odot(s',m')=(ss'+\Phi_2(m)m',sm'+ms'),
$$
for any $s,s'\in C_{A^!}(z)_0, m,m'\in C_{A^!}(z)_1$. Then  $\Lambda$ is a $\Z_2$-graded algebra concentrated in degree $0$ by forgetting the first grading and $\Lambda_0$ is also a $\Z_2$-graded algebra. In fact, one can find a $\Z_2$-graded algebra isomorphism between $\Lambda_0$ and $C_{A^!}(z)$. But we give another way to discuss here.

We view $\Lambda_0$ as a $\Z_2$-graded algebra. It is not hard to know that  ${\Phi_2}_{\mid M_1}$ is a linear automorphism of $M_1$ by Lemma \ref{lem: properties of Xi Phi}(5). We define a left Zhang-twisting system $\nu=\{\nu_0,\nu_1\}$ of $\Z_2$-graded algebra $C_{A^!}(z)$ by
$$
\nu_0=\id,\qquad {\nu_1}_{\mid C_{A^!}(z)_0}=\id, \qquad {\nu_1}_{\mid C_{A^!}(z)_1}={\Phi_2}_{\mid M_1}.
$$
So $\Lambda_0\cong {^{\nu} C_{A^!}(z)}$ as $\Z_2$-graded algebras. By Theorem \ref{thm: +1case equivalence to Lambda}, there are  equivalences of triangulated categories
	$$\umcm(B/(z+y_1^2+y_2^2)) \cong D^b(\gr_{\Z_2} \Lambda)\cong  D^b(\module \Lambda_0)^{\times 2}\cong D^b(\module {^{\nu} C_{A^!}(z)})^{\times 2}.$$
Moreover, $C_{A^!}(z)\cong \kk\Z_2\times \kk\Z_2$ as $\Z_2$-graded algebras, so the algebra ${^{\nu} C_{A^!}(z)}$ is $\Z_2$-graded semisimple by \cite[Corollary 4.4]{Z} and ${^{\nu} C_{A^!}(z)}\cong \kk\Z_2\times \kk\Z_2\cong C_{A^!}(z)$ since ${^{\nu} C_{A^!}(z)}$ is commutative by an easy computation. It implies that $B/(z+y_1^2+y_2^2)$ is a noncommutative graded isolated singularity and 
there is an equivalence of triangulated categories
	$$\umcm(B/(z+y_1^2+y_2^2)) \cong D^b(\module\kk)^{\times 8}.$$
\end{example}

\section{A Skew version of Kn\"orrer's Periodicity Theorem with $p_{12}=-1$}

Let $B=A_{\{-1,p_{11}\}}[y_1,y_2;\sigma]$ be a $\Z$-graded double Ore extension of a noetherian Koszul Artin-Schelter regular algebra $A=T(V)/(R)$  with $\deg y_1=\deg y_2=1$. In this case, $p_{11}$ can be nonzero. We will always assume $p_{11}=0$ except the following special values of $p_{11}$.

\begin{proposition}\label{prop: -1case p11=2genhao-1 not isolated singularity}
	Let $B=A_{\{-1,p_{11}\}}[y_1,y_2;\sigma]$ be a $\Z$-graded double Ore extension of a noetherian Koszul Artin-Schelter regular algebra $A$ and $z\in A_2$ be a nonzero regular central element of $A$.	Suppose $B$ is noetherian and $z+y_1^2+y_2^2$ is a regular central element of $B$. If  $p_{11}=\pm2\sqrt{-1}$, then $B/(z+y_1^2+y_2^2)$ is not a noncommutative  graded isolated singularity.
\end{proposition}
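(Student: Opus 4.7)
The plan is to invoke Theorem \ref{thm: properties of clifford def}(2), which identifies the noncommutative graded isolated singularity property of $B/(z+y_1^2+y_2^2)$ with $\Z_2$-graded semisimplicity of the Clifford deformation $C_{B^!}(z+y_1^2+y_2^2)$. By Lemma \ref{lem: rep for Clifford deformation of double ore extensions}(4) one has $\gldim_{\Z_2} C_{B^!}(z+y_1^2+y_2^2) \geq \gldim_{\Z_2} C_{J^!}(y_1^2+y_2^2)$, so it suffices to exhibit positive graded global dimension in the smaller factor $C_{J^!}(y_1^2+y_2^2)$, equivalently to show that this algebra is not $\Z_2$-graded semisimple.

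First I would make $C_{J^!}(y_1^2+y_2^2)$ explicit. Writing $u=y_1^*$ and $v=y_2^*$ and specialising $p_{12}=-1$ in Lemma \ref{lem: properties for B!}, the space $R_J^\perp$ is spanned by $v^2$, $uv-vu$ and $u^2+p_{11}vu$. Choosing the lift $\hat h = y_1\otimes y_1 + y_2\otimes y_2$ of $h = y_1^2+y_2^2$, the Clifford map evaluates to $v^2(\hat h)=1$, $(uv-vu)(\hat h)=0$ and $(u^2+p_{11}vu)(\hat h)=1$, so
\[
C_{J^!}(y_1^2+y_2^2)\;\cong\;\kk\langle u,v\rangle\bigl/\bigl(v^2-1,\ uv-vu,\ u^2+p_{11}vu-1\bigr),
\]
a commutative $\Z_2$-graded algebra of dimension $4$ with $u$ and $v$ both in $\Z_2$-degree $1$.

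For $p_{11}=2\sqrt{-1}$ I would consider the homogeneous element $w=u+\sqrt{-1}\,v$ of $\Z_2$-degree $1$ and compute, using $uv=vu$ and $u^2=1-p_{11}vu$,
\[
w^2 \;=\; u^2 + 2\sqrt{-1}\,uv - v^2 \;=\; \bigl(1-2\sqrt{-1}\,vu\bigr) + 2\sqrt{-1}\,uv - 1 \;=\; 0.
\]
Since $C_{J^!}(y_1^2+y_2^2)$ is commutative, the homogeneous principal ideal $(w)$ satisfies $(w)^2=0$ and is a nonzero nilpotent $\Z_2$-graded ideal; hence the graded Jacobson radical is nonzero and $C_{J^!}(y_1^2+y_2^2)$ fails to be $\Z_2$-graded semisimple. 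The case $p_{11}=-2\sqrt{-1}$ is handled identically with $w = u-\sqrt{-1}\,v$.

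Combining these steps yields $\gldim_{\Z_2} C_{B^!}(z+y_1^2+y_2^2)>0$, so by Theorem \ref{thm: properties of clifford def}(2) the algebra $B/(z+y_1^2+y_2^2)$ is not a noncommutative graded isolated singularity. The only substantive calculation is the presentation of $C_{J^!}(y_1^2+y_2^2)$; the exceptional values $p_{11}=\pm 2\sqrt{-1}$ enter only to make $w^2=0$, and everything else is book-keeping with Lemma \ref{lem: rep for Clifford deformation of double ore extensions}. The main thing to be careful about is that failure of \emph{graded} semisimplicity (rather than ungraded) is what is required, but this is automatic because $w$ is homogeneous, so $(w)$ is a homogeneous nilpotent ideal.
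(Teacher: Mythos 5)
Your proof is correct, and the overall strategy—reduce to the subalgebra $J$ via Lemma~\ref{lem: rep for Clifford deformation of double ore extensions}(4) and then appeal to Theorem~\ref{thm: properties of clifford def}(2)—matches the paper's. Where you diverge is in how you establish that the smaller Clifford deformation fails to be $\Z_2$-graded semisimple. The paper first changes coordinates in $B$ so that $p_{11}$ becomes $0$ and the central element becomes $z+y_2^2$, reducing the subalgebra to $\kk_{-1}[y_1,y_2]$, and then cites the known fact that $\kk_{-1}[y_1,y_2]/(y_2^2)$ is not a noncommutative graded isolated singularity. You instead keep the original $J=\kk\langle y_1,y_2\rangle/(y_2y_1+y_1y_2-p_{11}y_1^2)$ with $h=y_1^2+y_2^2$, compute $C_{J^!}(h)\cong\kk\langle u,v\rangle/(v^2-1,\ uv-vu,\ u^2+p_{11}vu-1)$ explicitly, and exhibit a nonzero homogeneous element $w=u\pm\sqrt{-1}\,v$ of $\Z_2$-degree $1$ with $w^2=0$. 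Since this algebra is commutative, $(w)$ is a nonzero homogeneous nilpotent ideal and the graded Jacobson radical is nonzero, which gives non-semisimplicity directly. Your version has the advantage of being entirely self-contained (no coordinate change, no appeal to an unproved ``well known'' fact), and it exposes concretely how the exceptional values $p_{11}=\pm 2\sqrt{-1}$ enter: precisely when the quadratic form $u^2+2\sqrt{-1}\,uv-v^2$ degenerates. The paper's route is shorter once the reduction to $\kk_{-1}[y_1,y_2]/(y_2^2)$ is accepted, and it also makes the coordinate change $\rho$ available for the follow-up Lemma~\ref{lem: two class of -1 case}, but as a standalone argument for this proposition your computation is cleaner.
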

	\begin{proof}
		Write 
		$$\sigma'=
		\begin{pmatrix}
			\sigma_{11}+\cfrac{p_{11}}{2}\sigma_{12} &\sigma_{12}\\
			\sigma_{21}+\cfrac{p_{11}}{2}\sigma_{22}-\cfrac{p_{11}}{2}\sigma_{11}-\cfrac{p_{11}^2}{4}\sigma_{12} & \sigma_{22}-\cfrac{p_{11}}{2}\sigma_{12}
		\end{pmatrix}:A\to M_2(A),
		$$
		and
		$$\varphi'=
		\begin{pmatrix}
			\varphi_{11}+\cfrac{p_{11}}{2}\varphi_{21} &\varphi_{12}+\cfrac{p_{11}}{2}\varphi_{22}-\cfrac{p_{11}}{2}\varphi_{11}-\cfrac{p_{11}^2}{4}\varphi_{21}\\
			\varphi_{21}& \varphi_{22}-\cfrac{p_{11}}{2}\varphi_{21}
		\end{pmatrix}:A\to M_2(A),
		$$
		where $\varphi=(\varphi_{ij})$ is the inverse of $\sigma$.
		
		It is routine to check that $\sigma'$ is an invertible algebra homomorphism with the inverse $\varphi'$ and $A_{\{-1,0\}}[y_1,y_2;\sigma']$ is a $\Z$-graded double Ore extension by Lemma \ref{lem: invertible iff double ore extension}. Then the algebra homomorphism from $B$ to $A_{\{-1,0\}}[y_1,y_2;\sigma']$ mapping
		$$
		y_1\mapsto  y_1,\quad
		y_2\mapsto  y_2+\cfrac{p_{11}}{2}y_1,\quad a\mapsto a,\ \forall a\in A,
		$$
		is an isomorphism, and the image of $z+y_1^2+y_2^2$ is $z+y_2^2$ since $p_{11}=\pm2\sqrt{-1}$. So 
			$$B/(z+y_1^2+y_2^2)\cong A_{\{-1,0\}}[y_1,y_2;\sigma']/(z+y_2^2).
			$$ 

It is well known $k_{-1}[y_1,y_2]/(y_2^2)$ is not a noncommutative graded isolated singularity, and so 
$$\gldim_{\Z_2} C_{k_{-1}[y_1,y_2]^!}(y_2^2)>0$$ 
by Theorem \ref{thm: properties of clifford def}(2). Write $D=A_{\{-1,0\}}[y_1,y_2;\sigma']$. By Lemma \ref{lem: rep for Clifford deformation of double ore extensions}(4), $$\gldim_{\Z_2}C_{D^!}(z+y_2^2)\geq\gldim_{\Z_2} C_{k_{-1}[y_1,y_2]^!}(y_2^2)>0.$$ The result holds by Theorem \ref{thm: properties of clifford def}(2).
\end{proof}

\begin{lemma}\label{lem: two class of -1 case}
Let $B=A_{\{-1,p_{11}\}}[y_1,y_2;\sigma]$ be a $\Z$-graded double Ore extension of $A$ where $p_{11}\neq \pm2\sqrt{-1}$. Suppose the element $y^2_1+y^2_2$ is a central element in $B$, then there exists a $\Z$-graded double Ore extension $A_{\{-1,0\}}[y_1,y_2;\sigma']$ such that 
		$$B/(a+y_1^2+y_2^2)\cong 
			A_{\{-1,0\}}[y_1,y_2;\sigma']/(a+y_1^2+y_2^2), 
		$$
		for any $a\in A$.
\end{lemma}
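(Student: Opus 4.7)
The plan is to exhibit an explicit linear change of variables in $y_1,y_2$ that simultaneously absorbs the $p_{11}y_1^2$ term from the relation $y_2y_1=-y_1y_2+p_{11}y_1^2$ and preserves the quadratic form $y_1^2+y_2^2$. The proof of Proposition \ref{prop: -1case p11=2genhao-1 not isolated singularity} already carries out the substitution $y_2\mapsto y_2+(p_{11}/2)y_1$, which kills $p_{11}$ but deforms $y_1^2+y_2^2$ into $(1+p_{11}^2/4)y_1^2+y_2^2$; the idea is to combine that substitution with a rescaling of $y_1$ by a square root of $1+p_{11}^2/4$, which is available exactly because $p_{11}\neq\pm 2\sqrt{-1}$.

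Concretely, I would first pick $\lambda\in\kk$ with $\lambda^2=1+p_{11}^2/4$, so $\lambda\neq 0$. Let $C=A_{\{-1,0\}}[y'_1,y'_2;\sigma']$ where $\sigma'=(\sigma'_{ij}):A\to M_2(A)$ is defined by
\begin{align*}
\sigma'_{11} &= \sigma_{11}+\tfrac{p_{11}}{2}\sigma_{12}, & \sigma'_{12} &= \lambda\sigma_{12},\\
\sigma'_{22} &= \sigma_{22}-\tfrac{p_{11}}{2}\sigma_{12}, & \sigma'_{21} &= \lambda^{-1}\bigl[\sigma_{21}+\tfrac{p_{11}}{2}(\sigma_{22}-\sigma_{11})-\tfrac{p_{11}^2}{4}\sigma_{12}\bigr].
\end{align*}
These formulas are forced by requiring that the map $\phi:B\to C$ given by $\phi|_A=\id_A$, $\phi(y_1)=\lambda^{-1}y'_1$, $\phi(y_2)=y'_2+(p_{11}/(2\lambda))y'_1$ respect the cross relations $y_ia=\sigma_{i1}(a)y_1+\sigma_{i2}(a)y_2$. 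One then checks that $\sigma'$ is an algebra homomorphism satisfying \eqref{eq: conditio for sigma} with $P=\{-1,0\}$ (by using the corresponding identities for $\sigma$) and is invertible (its inverse is read off from the symmetric substitution below), so by Lemma \ref{lem: invertible iff double ore extension} the algebra $C$ is a $\Z$-graded double Ore extension of $A$.

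Next I would verify that $\phi$ preserves the quadratic relation by a direct computation: $\phi(y_2)\phi(y_1)=\lambda^{-1}y'_2y'_1+\tfrac{p_{11}}{2\lambda^2}(y'_1)^2=-\lambda^{-1}y'_1y'_2+\tfrac{p_{11}}{2\lambda^2}(y'_1)^2$ (using $y'_2y'_1=-y'_1y'_2$) agrees with $\phi(-y_1y_2+p_{11}y_1^2)$. Symmetrically, the assignment $y'_1\mapsto\lambda y_1$, $y'_2\mapsto y_2-(p_{11}/2)y_1$ defines a two-sided inverse of $\phi$, so $\phi$ is an isomorphism of $\Z$-graded algebras. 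The key identity is
$$
\phi(y_1^2+y_2^2)=\lambda^{-2}(y'_1)^2+(y'_2)^2+\tfrac{p_{11}}{2\lambda}(y'_2y'_1+y'_1y'_2)+\tfrac{p_{11}^2}{4\lambda^2}(y'_1)^2=(y'_1)^2+(y'_2)^2,
$$
where the cross term vanishes because $y'_2y'_1+y'_1y'_2=0$, and the remaining coefficient collapses using $\lambda^{-2}(1+p_{11}^2/4)=1$. Since $\phi|_A=\id_A$, this gives $\phi(a+y_1^2+y_2^2)=a+(y'_1)^2+(y'_2)^2$ for any $a\in A$, so $\phi$ descends to the required isomorphism of quotients.

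The only real obstacle is finding the correct substitution; once the pair $\bigl(\lambda^{-1}y'_1,\,y'_2+(p_{11}/(2\lambda))y'_1\bigr)$ is identified, every subsequent check is routine bookkeeping. The hypothesis $p_{11}\neq\pm 2\sqrt{-1}$ enters exactly at the existence of a nonzero $\lambda$ with $\lambda^2=1+p_{11}^2/4$, which pinpoints the exceptional case handled separately in Proposition \ref{prop: -1case p11=2genhao-1 not isolated singularity}.
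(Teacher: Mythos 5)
Your proposal is correct and is essentially the paper's own argument: the paper likewise picks $c$ (your $\lambda$) with $c^2=1+p_{11}^2/4$, defines the same $\sigma'$, and sends $y_1\mapsto c^{-1}y_1$, $y_2\mapsto y_2+\tfrac{p_{11}}{2}c^{-1}y_1$, noting the quadric is preserved because $p_{11}\neq\pm2\sqrt{-1}$. The only difference is cosmetic (you make the cross-term cancellation and the verification of the $p_{12}=-1$ relation explicit, and you use primed target variables).
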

\begin{proof}
 Write
	$$
	\sigma'=
	\begin{pmatrix}
			\sigma_{11}+\cfrac{p_{11}}{2}\sigma_{12} &c\sigma_{12}\\
			c^{-1}\left(\sigma_{21}+\cfrac{p_{11}}{2}\sigma_{22}-\cfrac{p_{11}}{2}\sigma_{11}-\cfrac{p_{11}^2}{4}\sigma_{12}\right) & \sigma_{22}-\cfrac{p_{11}}{2}\sigma_{12}
		\end{pmatrix}:A\to M_2(A),
	$$
	for some $c\in\kk$ satisfying $c^2=1+\frac{p_{11}^2}{4}$. 	It is similar to the proof of Proposition \ref{prop: -1case p11=2genhao-1 not isolated singularity} to obtain that  $A_{\{-1,0\}}[y_1,y_2;\sigma']$ is a $\Z$-graded double Ore extension. There is a $\Z$-graded algebra isomorphism $\rho$ from $B$ to $A_{\{-1,0\}}[y_1,y_2;\sigma']$ mapping
	$$
	y_1\mapsto  c^{-1}y_1,\quad
	y_2\mapsto  y_2+\cfrac{p_{11}}{2}c^{-1}y_1,\quad x\mapsto x,\ \forall x\in A.
	$$
Since $p_{11}\neq\pm2\sqrt{-1}$, then $\rho(a+y_1^2+y_2^2)=a+y_1^2+y_2^2$. 
\end{proof}

In the rest of this section, $B=A_{\{-1,0\}}[y_1,y_2;\sigma]$ be a $\Z$-graded double Ore extension of a noetherian Koszul Artin-Schelter regular algebra $A=T(V)/(R)$  with $\deg y_1=\deg y_2=1$, and  $z\in A_2$ be a nonzero regular central element of $A$. There are results similar to Lemma \ref{lem: +1case condition for central element} and  Corollary \ref{cor: properties for dual of sigma}.

\begin{lemma}\label{lem: -1case condition for central element}
	The element $z+y_1^2+y_2^2$ is a regular central element of $B$ if and only if 
	\begin{enumerate}
		\item $\sigma_{11}^2+\sigma_{21}^2=\sigma_{12}^2+\sigma_{22}^2=\id_A$;
		\item 
		$\sigma_{11}\sigma_{12}+\sigma_{21}\sigma_{22}=\sigma_{12}\sigma_{11}+\sigma_{22}\sigma_{21}$;
		\item $\sigma(z)=
		\begin{pmatrix}
			z &	0\\
			0	&	z
		\end{pmatrix}.
		$
	\end{enumerate}	
\end{lemma}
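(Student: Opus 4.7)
The plan is to verify centrality of $z+y_1^2+y_2^2$ by testing it against the generators of $B$, namely the elements of $A$ together with $y_1$ and $y_2$, and then read off the conditions (1), (2), (3) as the coefficients in the PBW basis $\{y_1^i y_2^j\mid i,j\geq 0\}$ of $B$ over $A$. This parallels the proof of Lemma \ref{lem: +1case condition for central element}; the only structural difference is that here $y_2 y_1 = -y_1 y_2$ (since $p_{12}=-1$ and $p_{11}=0$), which flips certain signs.

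First I would handle commutativity with an arbitrary $a\in A$. Since $z$ is central in $A$, it suffices to check $(y_1^2+y_2^2)a = a(y_1^2+y_2^2)$. I would apply the relation $y_i a = \sigma_{i1}(a)y_1+\sigma_{i2}(a)y_2$ twice to expand $y_i^2 a$, then straighten using $y_2y_1=-y_1y_2$. Comparing coefficients of $y_1^2$, $y_2^2$, and $y_1y_2$ against those of $ay_1^2+ay_2^2$ forces
\begin{align*}
\sigma_{11}^2+\sigma_{21}^2 &= \id_A,\\
\sigma_{12}^2+\sigma_{22}^2 &= \id_A,\\
\sigma_{11}\sigma_{12}+\sigma_{21}\sigma_{22} - \sigma_{12}\sigma_{11}-\sigma_{22}\sigma_{21} &= 0,
\end{align*}
which are precisely (1) and (2). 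The sign flip relative to the $p_{12}=1$ case (where the analogous cross-term equation is a $+$ sum) comes directly from $y_2y_1=-y_1y_2$.

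Next I would test commutativity with $y_1$. Because $y_2y_1=-y_1y_2$, one computes $y_2^2 y_1 = y_1 y_2^2$, so $(z+y_1^2+y_2^2)y_1 = zy_1+y_1^3+y_1y_2^2$, while $y_1(z+y_1^2+y_2^2) = \sigma_{11}(z)y_1+\sigma_{12}(z)y_2+y_1^3+y_1y_2^2$. Matching coefficients in the PBW basis gives $\sigma_{11}(z)=z$ and $\sigma_{12}(z)=0$. The symmetric calculation for $y_2$ (using $y_2y_1^2=y_1^2y_2$) yields $\sigma_{21}(z)=0$ and $\sigma_{22}(z)=z$, which together are condition (3). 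Conversely, assuming (1)--(3) the same computations show $z+y_1^2+y_2^2$ commutes with every generator, hence is central.

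For regularity, I would invoke that $B$ is a connected graded noetherian AS-regular algebra (by \cite[Theorem 2.1]{ZVZ} and \cite[Theorem 0.2]{ZZ1} as used above the lemma), hence a domain; since $z\in A_2$ is nonzero, the element $z+y_1^2+y_2^2$ is nonzero by PBW independence and thus automatically regular. The computations are routine; the only potential source of error is bookkeeping of signs when sliding scalars past $y_1, y_2$, so I would organize the expansion by recording each monomial $\sigma_{ij}\sigma_{k\ell}(a)\,y_{?}y_{?}$ explicitly before applying $y_2y_1=-y_1y_2$ to ensure the cross-term sign in condition (2) is correctly obtained.
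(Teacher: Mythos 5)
Your proof is correct and follows the direct-computation approach the paper implicitly takes (the paper itself gives no explicit proof, saying results "similar to Lemma \ref{lem: +1case condition for central element}" hold). Expanding $y_i^2 a$ via $y_i a=\sigma_{i1}(a)y_1+\sigma_{i2}(a)y_2$, straightening with $y_2y_1=-y_1y_2$, and comparing coefficients in the free left $A$-basis $\{y_1^iy_2^j\}$ does yield exactly (1) and (2) with the cross-term sign you identified; commuting past $y_1$ and $y_2$ (using $y_2^2y_1=y_1y_2^2$ and $y_2y_1^2=y_1^2y_2$) yields (3). One minor caveat on the regularity side: the inference "$B$ is noetherian AS-regular, hence a domain" is not automatic in arbitrary global dimension — it is a standing conjecture in general. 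It would be safer to argue directly that the trimmed double Ore extension of a domain with $\sigma$ invertible and $p_{12}\neq 0$ is a domain (e.g.\ via the PBW/filtration structure), or simply to note that the nontrivial content of the lemma, and the only part the paper uses, is the equivalence with centrality. This does not affect the substance of your argument.
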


By Lemma \ref{lem: -1case condition for central element}(3), the construction \eqref{eq: dual map of simga from clifford deformation to its matrix}, \ref{lem: invertible iff double ore extension} and Lemma \ref{lem: inv of sigma and simga dual}, we have the following $t$-invertible algebra homomorphism
$$
\sigma^!=(\sigma_{ij}^!):C_{A^!}(z)\to M_2(C_{A^!}(z)).
$$

\begin{corollary}\label{cor: properties for dual of sigma case -1} If $z+y_1^2+y_2^2\in B$ is a regular central element, then
	\begin{enumerate}
		\item $(\sigma^!_{11})^2+(\sigma^!_{21})^2=(\sigma^!_{12})^2+(\sigma^!_{22})^2=\id_{C_{A^!}(z)};$
		\item 
		$\sigma^!_{12}\sigma^!_{11}+\sigma^!_{22}\sigma^!_{21}=\sigma^!_{11}\sigma^!_{12}+\sigma^!_{21}\sigma^!_{22};$
		\item $\sigma^!_{11}\sigma^!_{21}+\sigma^!_{21}\sigma^!_{11}=0;$
		\item $\sigma^!_{12}\sigma^!_{22}+\sigma^!_{22}\sigma^!_{12}=0;$
		\item $\sigma^!_{22}\sigma^!_{11}+\sigma^!_{12}\sigma^!_{21}=\sigma^!_{11}\sigma^!_{22}+\sigma^!_{21}\sigma^!_{12}.$
	\end{enumerate}	
\end{corollary}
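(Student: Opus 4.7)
The plan is to mirror the short proof of the $p_{12}=1$ analogue, Corollary \ref{cor: properties for dual of sigma}. The essential content is extracting the corresponding operator identities on $A$ from the hypotheses and then transporting them through the dualization construction \eqref{eq: dual map of simga from clifford deformation to its matrix}.

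First I would record the source identities on $A$. Lemma \ref{lem: -1case condition for central element} provides the pre-images of (1) and (2), together with $\sigma_{ij}(z)=\delta_{ij}z$, which is exactly the hypothesis needed in \eqref{eq: dual map of simga from clifford deformation to its matrix} for $\sigma^!$ to descend to $C_{A^!}(z)$. Specializing the double Ore extension constraints \eqref{eq: conditio for sigma} to $p_{12}=-1$ produces
\begin{align*}
\sigma_{11}\sigma_{21}+\sigma_{21}\sigma_{11}&=0, \\
\sigma_{12}\sigma_{22}+\sigma_{22}\sigma_{12}&=0, \\
\sigma_{22}\sigma_{11}+\sigma_{12}\sigma_{21}&=\sigma_{11}\sigma_{22}+\sigma_{21}\sigma_{12},
\end{align*}
the pre-images of (3), (4) and (5).

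Next I would transport each identity by restricting $\sigma_{ij}$ to $V=A_1$, dualizing to $V^*$, and pushing forward through the construction of $\sigma^!$. Every source identity has one of two symmetric shapes --- either ``$XY+YX=0$'' (into which the self-composition identities of (1) fit as a degenerate case) or ``$XY+UV=YX+VU$'' --- and both shapes are invariant under the order reversal that dualization induces on compositions. Consequently the five identities for $\sigma^!_{ij}$ hold on the generating subspace $V^*\subseteq C_{A^!}(z)$.

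The final step, which I expect to be the main (if routine) obstacle, is propagating these identities from $V^*$ to all of $C_{A^!}(z)$. This is a coupled induction on word length in $V^*$: for a product $xy$, the algebra-homomorphism property $\sigma^!_{ij}(xy)=\sum_k\sigma^!_{ik}(x)\sigma^!_{kj}(y)$ expands each side of (1)--(5) into a sum of four terms that the inductive hypothesis collapses, typically invoking several of the five identities simultaneously on the inner factors. As a representative instance, expanding $(\sigma^!_{11})^2(xy)+(\sigma^!_{21})^2(xy)$ via (1) and (2) on $x$ yields $xy$ plus a factor of $\sigma^!_{21}\sigma^!_{11}(y)+\sigma^!_{11}\sigma^!_{21}(y)$, which vanishes by identity (3); the analogous coupled expansions handle the propagation of (2)--(5).
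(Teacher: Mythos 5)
Your proof is correct and takes essentially the same approach as the paper: the paper's (implicit, by analogy with Corollary \ref{cor: properties for dual of sigma}) one-line proof simply cites Lemma \ref{lem: -1case condition for central element} and \eqref{eq: conditio for sigma}, and you correctly reconstruct this by specializing those source identities to $p_{12}=-1$, observing their invariance under the order reversal induced by the dualization $(-)^*$, and then propagating from $V^*$ to all of $C_{A^!}(z)$ by a coupled induction using the algebra-homomorphism property of $\sigma^!$. The paper leaves the propagation step entirely implicit; your coupled induction---which indeed must invoke several of the five identities simultaneously at each stage (e.g.\ (1),(2),(3) to propagate (1), and (3),(4),(5) to propagate (3) and (5))---is the correct and natural way to make it explicit.
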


Now we introduce a linear map
\begin{equation}\label{eq: def of theta of Eoplus E}
	\theta=(\theta_{ij})=\begin{pmatrix}
	\id & \sigma^!_{12}\sigma^!_{11}+\sigma^!_{22}\sigma^!_{21}\\
	0& \sigma^!_{22}\sigma^!_{11}+\sigma^!_{12}\sigma^!_{21}
\end{pmatrix}: C_{A^!}(z) \to M_2(C_{A^!}(z)),
\end{equation}
where each $\theta_{ij}$ is a $\Z_2$-graded map for any $i,j=1,2.$ Using the fact $\sigma^!$ is an algebra homomorphism, one obtains the following properites of $\theta.$

\begin{lemma} Retain the notations above. Then
	\begin{enumerate}		
		\item $\theta$ is an algebra homomorphism;
		\item $\theta_{12}\theta_{22}+\theta_{22}\theta_{12}=0$;
		
		\item $\theta_{22}^2+\theta_{12}^2=\id_{C_{A^!}(z)}$.
	\end{enumerate}
\end{lemma}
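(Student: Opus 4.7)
The plan is to mirror the proof of Lemma \ref{lem: properties of pair of theta} from the $p_{12} = 1$ case, combining two inputs: the componentwise multiplicativity $\sigma^!_{ij}(xy) = \sum_{k} \sigma^!_{ik}(x)\sigma^!_{kj}(y)$ coming from $\sigma^!$ being an algebra homomorphism, together with the identities of Corollary \ref{cor: properties for dual of sigma case -1}. The decisive observation is that the symmetries (2) and (5) of that corollary furnish two equivalent expressions for $\theta_{12}$ and $\theta_{22}$, namely $\theta_{12} = \sigma^!_{12}\sigma^!_{11} + \sigma^!_{22}\sigma^!_{21} = \sigma^!_{11}\sigma^!_{12} + \sigma^!_{21}\sigma^!_{22}$ and $\theta_{22} = \sigma^!_{22}\sigma^!_{11} + \sigma^!_{12}\sigma^!_{21} = \sigma^!_{11}\sigma^!_{22} + \sigma^!_{21}\sigma^!_{12}$. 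Choosing these presentations appropriately converts each verification into pairwise matching.

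For (1), since $\theta$ is upper triangular with $\theta_{11} = \id$ and $\theta_{21} = 0$, matrix multiplicativity $\theta(xy) = \theta(x)\theta(y)$ reduces to two component equalities. The $(1,2)$ entry gives the skew-derivation identity
$$\theta_{12}(xy) = x\theta_{12}(y) + \theta_{12}(x)\theta_{22}(y),$$
which I would verify by expanding the left side via componentwise multiplicativity and cancelling cross-terms using the anti-commutations (3) and (4). The $(2,2)$ entry demands multiplicativity
$$\theta_{22}(xy) = \theta_{22}(x)\theta_{22}(y);$$
expansion of the left side produces eight terms, two pairs cancel via (3) and (4), and the remaining four match $\theta_{22}(x)\theta_{22}(y)$ precisely because of the symmetry (5). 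Combined with $\theta(1) = I_2$ (which is immediate from $\sigma^!(1) = I_2$), this completes (1).

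For (2), expand $\theta_{12}\theta_{22} + \theta_{22}\theta_{12}$ using the alternate representations into eight fourfold compositions; the four cross terms cancel in pairs via the anti-commutation (3), and the four main terms collect as $\sigma^!_{12}\bigl((\sigma^!_{11})^2 + (\sigma^!_{21})^2\bigr)\sigma^!_{22} + \sigma^!_{22}\bigl((\sigma^!_{11})^2 + (\sigma^!_{21})^2\bigr)\sigma^!_{12}$, which collapses to $\sigma^!_{12}\sigma^!_{22} + \sigma^!_{22}\sigma^!_{12} = 0$ by (1) and (4). For (3), a parallel expansion of $\theta_{22}^2 + \theta_{12}^2$ has its cross terms cancel via (4), while the main terms collect as $\sigma^!_{11}\bigl((\sigma^!_{22})^2 + (\sigma^!_{12})^2\bigr)\sigma^!_{11} + \sigma^!_{21}\bigl((\sigma^!_{22})^2 + (\sigma^!_{12})^2\bigr)\sigma^!_{21}$, which reduces to $(\sigma^!_{11})^2 + (\sigma^!_{21})^2 = \id$ by two applications of (1).

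The principal obstacle is the $(2,2)$-entry identity in part (1): the positive sign in $\theta_{22}$ prevents the naive anti-symmetric cancellation used in the $p_{12} = 1$ case, so one must genuinely invoke the symmetry (5) of Corollary \ref{cor: properties for dual of sigma case -1} to realise the required match between the expansion of $\theta_{22}(xy)$ and the product $\theta_{22}(x)\theta_{22}(y)$. The computations in (2) and (3), though tedious, are essentially routine once the two equivalent presentations of $\theta_{12}$ and $\theta_{22}$ are exploited to align the pairings.
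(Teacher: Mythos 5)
Your plan is correct in strategy and fills in details the paper leaves implicit --- the paper's entire proof is the single sentence ``Using the fact $\sigma^!$ is an algebra homomorphism, one obtains the following properties of $\theta$'' --- so the comparison is really against the intended-but-unwritten computation, which you have reconstructed accurately. Writing $a,b,c,d$ for $\sigma^!_{11},\sigma^!_{12},\sigma^!_{21},\sigma^!_{22}$, the $(2,2)$-entry of part (1) does expand into eight terms, with the $ba(y)$- and $dc(y)$-coefficients killed by (3) and (4) and the remaining four collected via (5) and the two presentations $\theta_{22}=da+bc=ad+cb$; your diagnosis that (5) is the genuinely new ingredient compared with the $p_{12}=1$ case is exactly right. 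Parts (2) and (3) go through as you describe, provided one chooses opposite presentations of the two factors in each composite (e.g.\ $(ba+dc)\circ(ad+cb)$ plus $(bc+da)\circ(cd+ab)$ for (2)): the cross terms $b(ac+ca)b$, $d(ca+ac)d$ vanish by (3), and the main terms collect to $b(a^2+c^2)d+d(a^2+c^2)b=bd+db=0$ by (1) and (4); similarly for (3).

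One small correction: for the $(1,2)$-entry of part (1) your plan says the cross-terms cancel via the anti-commutations (3) and (4), but in fact nothing cancels there. The eight terms regroup as
$$(a^2+c^2)(x)\,ba(y)+(b^2+d^2)(x)\,dc(y)+(ab+cd)(x)\,bc(y)+(ba+dc)(x)\,da(y),$$
and one invokes Corollary~\ref{cor: properties for dual of sigma case -1}(1) to collapse the first two coefficients to the identity and (2) to recognise $ab+cd=ba+dc=\theta_{12}$; items (3) and (4) play no role in this component. This is a minor slip that does not affect the soundness of the overall argument.
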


For the $\Z_2$-graded algebra $\kk\times \kk$, we choose an invertible $\kk$-basis  $\varepsilon=\{\varepsilon_1=(1,1),\varepsilon_2=(1,-1)\}$.

\begin{theorem}
	Let $\theta$ be the algebra homomorphisms from $C_{A^!}(z)$ to $M_2(C_{A^!}(z))$ defined in \eqref{eq: def of theta of Eoplus E}. Then $\theta_\times=(\theta,\varepsilon)$ is a twisting system of $C_{A^!}(z)\times C_{A^!}(z)$.
\end{theorem}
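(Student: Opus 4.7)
The plan is to verify the three conditions in Definition~\ref{def: twisting system of direct product} for the pair $\theta_\times = (\theta,\varepsilon)$: namely, that $\theta$ is $t$-invertible, that $\theta(1) \in GL_2(\kk)$, and that the compatibility equation holds. This parallels the $M_2$-version worked out in Theorem~\ref{thm: +1case, clifford def is twisted matrix algebra}(1), but is considerably simpler because the invertible basis $\varepsilon = \{(1,1),(1,-1)\}$ is commutative with only four nonzero structure constants.

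I would begin with the identity and the structure constants. Since $\sigma^!$ is a unital algebra homomorphism, $\sigma^!_{ij}(1) = \delta_{ij}$, so substituting into \eqref{eq: def of theta of Eoplus E} gives $\theta_{12}(1) = 0$ and $\theta_{22}(1) = 1$, hence $\theta(1) = I_2 \in GL_2(\kk)$. Reading off the multiplication table of $\varepsilon$ directly, one finds that $l_{1;11} = l_{1;22} = l_{2;12} = l_{2;21} = 1$ and all other $l_{p;jj'}$ vanish.

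For the compatibility equation, I would check
$$
\sum_{s,u=1}^2 l_{p;su}\theta_{uj'}\left(\theta_{sj}(x)x'\right) \;=\; \sum_{t,u=1}^2 l_{t;ju}\theta_{pt}(x)\theta_{uj'}(x')
$$
case by case over the eight triples $(p,j,j') \in \{1,2\}^3$. Because $\theta_{21} = 0$ and only four of the $l_{p;jj'}$ are nonzero, each side collapses to at most two summands in every case, and the two sides match after invoking (a) the algebra-homomorphism expansion $\theta_{ij}(yy') = \sum_k \theta_{ik}(y)\theta_{kj}(y')$, (b) the quadratic identity $\theta_{22}^2 + \theta_{12}^2 = \id$ (needed only for $(p,j,j')=(1,2,2)$), and (c) the anticommutation $\theta_{12}\theta_{22} + \theta_{22}\theta_{12} = 0$ (needed only for $(p,j,j')=(2,2,2)$); all three ingredients are supplied by the lemma immediately preceding the theorem.

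For the $t$-invertibility of $\theta$, I would follow the pattern used in Theorem~\ref{thm: +1case, clifford def is twisted matrix algebra}: since $\sigma$ is invertible (Lemma~\ref{lem: invertible iff double ore extension}) with inverse $\varphi$, the induced map $\varphi^! = (\varphi^!_{ij})$ is the $t$-inverse of $\sigma^!$ (Lemma~\ref{lem: inv of sigma and simga dual}), and I would propose an explicit lower-triangular candidate $\widetilde\varphi$ built from quadratic expressions in the components of $\varphi^!$, in direct analogy with the $t$-inverse formula displayed for $\theta^{(0)}$ in the $+1$ case. Verification reduces to showing that the $(2,2)$-entry $\theta_{22} = \sigma^!_{22}\sigma^!_{11} + \sigma^!_{12}\sigma^!_{21}$ is an invertible linear transformation whose inverse has a similar polynomial form in the $\varphi^!_{ij}$. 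I expect this to be the main obstacle of the proof, since it requires carefully pushing a four-fold composition of $\sigma^!$-components against a four-fold composition of $\varphi^!$-components through the anticommutation relations of Corollary~\ref{cor: properties for dual of sigma case -1} together with the $t$-invertibility identities $\sum_k \varphi^!_{ki}\sigma^!_{kj} = \delta_{ij}\id$ and $\sum_k \sigma^!_{jk}\varphi^!_{ik} = \delta_{ij}\id$. The eight-case compatibility check, by contrast, is essentially mechanical.
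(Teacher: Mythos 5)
Your proposal is correct and follows essentially the same route as the paper: confirm $\theta(1)=I_2$, read off the structure constants of $\varepsilon$, verify the compatibility equation case by case (using the algebra-homomorphism expansion of $\theta$ together with $\theta_{22}^2+\theta_{12}^2=\id$ and the anticommutation of $\theta_{12},\theta_{22}$), and exhibit a lower-triangular $t$-inverse whose entries are quadratic in the $\varphi^!_{ij}$. The only real difference is one of emphasis: the paper simply displays the $t$-inverse
$\left(\begin{smallmatrix}\id & 0 \\ -\varphi^!_{21}\varphi^!_{11}-\varphi^!_{22}\varphi^!_{12} & \varphi^!_{22}\varphi^!_{11}+\varphi^!_{21}\varphi^!_{12}\end{smallmatrix}\right)$
without comment (treating it as routine, by analogy with the $p_{12}=+1$ case), whereas you flag it as the main obstacle; in fact, once a two-sided inverse $\rho_{22}$ of $\theta_{22}$ is found, the forced choice $\rho_{21}=-\theta_{12}\rho_{22}$ satisfies both defining equations automatically, so the burden is exactly the single identity $\rho_{22}\theta_{22}=\theta_{22}\rho_{22}=\id$, which is of the same mechanical character as your eight-case compatibility check.
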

\begin{proof}
The linear map 
$$\begin{pmatrix}
	\id  & 0 \\-\varphi_{21}^!\varphi_{11}^!-\varphi_{22}^!\varphi_{12}^!&\varphi_{22}^!\varphi_{11}^!+\varphi_{21}^!\varphi_{12}^!
\end{pmatrix}:C_{A^!}(z)\to M_2(C_{A^!}(z))
$$
is the $t$-inverse of $\theta$. Clearly, $\theta(1)=I_2\in GL_2(\kk)$. 

In this case, $l_{1;jj'}=\delta_{jj'}$ and $l_{2;jj'}=\delta_{jj'}+1$ for any $j,j'=1,2.$ It is straightforward but tedious to check the following simplified conditions of Definition \ref{def: twisting system of direct product} hold
\begin{align*}
	\theta_{1j'}\left(\theta_{1j}(x)x'\right)+\theta_{2j'}\left(\theta_{2j}(x)x'\right)&=
		x\theta_{jj'}(x')+\theta_{12}(x)\theta_{uj'}(x'),\\
  	\theta_{1j'}\left(\theta_{2j}(x)x'\right)+\theta_{2j'}\left(\theta_{1j}(x)x'\right)&=\theta_{22}(x)\theta_{uj'}(x'),
\end{align*}
where $u\in \{1,2\}-j$ for any $j,j'=1,2$ and $x,x'\in C_{A^!}(z).$ 
\end{proof}

Hence, we have a $\Z_2$-graded algebra $\Gamma={^{\theta_\times} \left(C_{A^!}(z)\times C_{A^!}(z)\right)}$ by Corollary \ref{cor: twisted of E oplus E}, and the multiplication is as follows
\begin{equation}\label{eq: mul of twisted C_{A^!}(z) oplus C_{A^!}(z)}
\begin{array}{ll}
	\varepsilon_{1}x\star \varepsilon_{1}x'=\varepsilon_{1}xx',
	&	\varepsilon_{1}x\star\varepsilon_{2}x'=\varepsilon_{1}\theta_{12}(x)x'+\varepsilon_{2}\theta_{22}(x)x',
	\\
\varepsilon_{2}x\star\varepsilon_{1}x'=\varepsilon_{2}xx',
	&\varepsilon_{2}x\star \varepsilon_{2}x'=\varepsilon_{1}\theta_{22}(x)x'+\varepsilon_{2}\theta_{12}(x)x',
\end{array}
\end{equation}
for any $x,x'\in C_{A^!}(z)$.

\begin{lemma}\label{lem: definition of mu} The following map 
		$$
	\begin{array}{cclc}
		\mu: & \Gamma&\to &\Gamma\\
		~& \varepsilon_1a &\mapsto& \varepsilon_1\sigma^!_{11}\xi_{-1}(a)+\varepsilon_2\sigma^!_{21}\xi_{-1}(a),\\
		~& \varepsilon_2a &\mapsto& \varepsilon_1\sigma^!_{21}\xi_{-1}(a)+\varepsilon_2\sigma^!_{11}\xi_{-1}(a),
	\end{array}
	$$
is a $\Z_2$-graded algebra isomorphism satisfying $\mu^2={\id}$.
\end{lemma}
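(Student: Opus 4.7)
The plan is to verify in turn that $\mu$ is $\Z_2$-graded, that $\mu^2 = \id$ (which immediately gives bijectivity), and finally that $\mu$ is an algebra homomorphism.

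First, note that $\sigma^!_{11}$ and $\sigma^!_{21}$ are $\Z_2$-graded linear transformations of $C_{A^!}(z)$ by construction \eqref{eq: dual map of simga from clifford deformation to its matrix}, while $\xi_{-1}$ is a $\Z_2$-graded algebra automorphism of $C_{A^!}(z)$ by definition. Since $\deg(\varepsilon_j a) = \deg a$ in $\Gamma$, the defining formula makes $\mu$ preserve the $\Z_2$-grading. In particular, $\xi_{-1}$ commutes with each $\sigma^!_{ij}$ and satisfies $\xi_{-1}^2 = \id$.

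Second, expanding $\mu^2(\varepsilon_1 a)$ and $\mu^2(\varepsilon_2 a)$ directly, and using the commutativity and involution properties of $\xi_{-1}$, one obtains
\begin{align*}
\mu^2(\varepsilon_1 a) &= \varepsilon_1\bigl[(\sigma^!_{11})^2 + (\sigma^!_{21})^2\bigr](a) + \varepsilon_2\bigl[\sigma^!_{21}\sigma^!_{11} + \sigma^!_{11}\sigma^!_{21}\bigr](a), \\
\mu^2(\varepsilon_2 a) &= \varepsilon_1\bigl[\sigma^!_{11}\sigma^!_{21} + \sigma^!_{21}\sigma^!_{11}\bigr](a) + \varepsilon_2\bigl[(\sigma^!_{21})^2 + (\sigma^!_{11})^2\bigr](a).
\end{align*}
Corollary \ref{cor: properties for dual of sigma case -1}(1) forces the first bracket in each line to equal $\id$, and (3) forces the second to vanish, so $\mu^2 = \id$ and consequently $\mu$ is bijective.

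The substantive step is the identity $\mu(\varepsilon_i a \star \varepsilon_j b) = \mu(\varepsilon_i a) \star \mu(\varepsilon_j b)$ for all $i,j \in \{1,2\}$ and $a,b \in C_{A^!}(z)$. In each of the four cases I would expand the left side using the multiplication table \eqref{eq: mul of twisted C_{A^!}(z) oplus C_{A^!}(z)} together with $\sigma^!_{ij}(xy) = \sum_{k=1}^2 \sigma^!_{ik}(x)\sigma^!_{kj}(y)$, expand the right side by applying the same multiplication table after expanding $\mu$, and then match coefficients of $\varepsilon_1$ and $\varepsilon_2$ to reduce each case to a relation purely among the $\sigma^!_{ij}$. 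For $(i,j) = (1,1)$ the two comparisons reduce to
\begin{align*}
\sigma^!_{12} = \theta_{12}\sigma^!_{11} + \theta_{22}\sigma^!_{21}, \qquad \sigma^!_{22} = \theta_{22}\sigma^!_{11} + \theta_{12}\sigma^!_{21},
\end{align*}
and substituting the definitions of $\theta_{12}$ and $\theta_{22}$ from \eqref{eq: def of theta of Eoplus E}, both identities collapse to $\sigma^!_{12}\cdot \id + \sigma^!_{22}\cdot 0 = \sigma^!_{12}$ and $\sigma^!_{22}\cdot \id + \sigma^!_{12}\cdot 0 = \sigma^!_{22}$ respectively, after applying Corollary \ref{cor: properties for dual of sigma case -1}(1) and (3).

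The hard part will be the remaining three cases $(1,2), (2,1), (2,2)$, where $\theta_{12}$ or $\theta_{22}$ occurs both inside the $\mu$-argument and outside; expanding $\sigma^!_{ij}\xi_{-1}(\theta_{kl}(a)b)$ via the algebra-homomorphism property generates a four-fold sum of triple compositions in the $\sigma^!_{ij}$, and one must invoke items (2), (4), (5) of Corollary \ref{cor: properties for dual of sigma case -1} in addition to (1) and (3) to force the right cancellations. No genuinely new idea beyond the $(1,1)$ pattern is required---only careful bookkeeping---but tracking which composition $\sigma^!_{ik}\sigma^!_{kj}\sigma^!_{lm}$ pairs off with which is the delicate bit.
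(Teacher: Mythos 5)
Your approach matches the paper's: verify the homomorphism property by expanding against the multiplication table \eqref{eq: mul of twisted C_{A^!}(z) oplus C_{A^!}(z)} case by case, and compute $\mu^2$ directly using Corollary \ref{cor: properties for dual of sigma case -1}(1),(3) together with the facts that $\xi_{-1}$ commutes with each $\sigma^!_{ij}$ (they are $\Z_2$-graded) and $\xi_{-1}^2=\id$. The paper itself only asserts that the homomorphism check is ``straightforward'' and shows no case; your $(1,1)$ verification is correct and is actually more than the paper records.

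Two small remarks on the remaining cases. First, $(2,1)$ does not belong with $(1,2)$ and $(2,2)$ in your ``hard'' list: since $\varepsilon_2 x \star \varepsilon_1 x' = \varepsilon_2 xx'$, no $\theta$ enters the $\mu$-argument there, and $(2,1)$ reduces to precisely the same two identities $\theta_{12}\sigma^!_{11}+\theta_{22}\sigma^!_{21}=\sigma^!_{12}$ and $\theta_{22}\sigma^!_{11}+\theta_{12}\sigma^!_{21}=\sigma^!_{22}$ you already proved for $(1,1)$. Second, for $(1,2)$ and $(2,2)$ it is convenient to use Corollary \ref{cor: properties for dual of sigma case -1}(2),(5) to rewrite $\theta_{12}=\sigma^!_{11}\sigma^!_{12}+\sigma^!_{21}\sigma^!_{22}$ and $\theta_{22}=\sigma^!_{11}\sigma^!_{22}+\sigma^!_{21}\sigma^!_{12}$; then the extra identities needed, e.g.\ $\sigma^!_{11}\theta_{12}+\sigma^!_{21}\theta_{22}=\sigma^!_{12}$ and $\sigma^!_{12}\theta_{12}+\sigma^!_{22}\theta_{22}=\sigma^!_{11}$, collapse to items (1), (3) and (1), (4) respectively, exactly as you anticipate. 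So the outline is sound and the claimed result does hold; to count as a complete proof you would still need to write out those remaining cases, but there is no missing idea and no step that would fail.
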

\begin{proof} It is straightforward to check that $\mu$ is an algebra homomorphism by \eqref{eq: mul of twisted C_{A^!}(z) oplus C_{A^!}(z)}. For any $a\in C_{A^!}(z)$, one obtains that
	\begin{align*}
		\mu^2(\varepsilon_1a)=\varepsilon_1\left((\sigma^!_{11})^2+(\sigma^!_{21})^2\right)(a)+\varepsilon_2\left(\sigma^!_{21}\sigma^!_{11}+\sigma^!_{11}\sigma^!_{21}\right)(a)=\varepsilon_1a,\\
		\mu^2(\varepsilon_2a)=\varepsilon_1\left(\sigma^!_{11}\sigma^!_{21}+\sigma^!_{21}\sigma^!_{11}\right)(a)+\varepsilon_2\left((\sigma^!_{21})^2+(\sigma^!_{11})^2\right)(a)=\varepsilon_2a.
	\end{align*}
Hence, $\mu^2=\id$ and $\mu$ is an isomorphism.
\end{proof}

By Example \ref{ex: twisted E oplus E}(2), the following map
$$
\begin{array}{cclc}
	\psi: & {_\mu\Gamma(1)}\otimes_{\Gamma} {_\mu\Gamma(1)} &\to &\Gamma\\
	~& \varepsilon_ia\otimes\varepsilon_{i'}a'&\mapsto &\mu(\varepsilon_ia)\varepsilon_{i'}a',
\end{array}
$$
is a $\Z_2$-graded $\Gamma$-bimodule homomorphism, and we have a semi-trivial extension
$$
\Gamma\ltimes_{\psi} ({_\mu\Gamma(1)}).
$$
By Remark \ref{rem: semi-trivial extension}, it is $\Z_2\times \Z_2$-graded and also $\Z_2$-graded by forgetting the first degree of $\Z_2\times \Z_2$-degrees. So viewing this semi-trivial extension as a $\Z_2$-graded algebra, the $0$-th homogeneous space is also a $\Z_2$-graded algebra.

\begin{lemma}\label{lem: -1case: semi-trivial extension is Zhang-twist} Retain the notations above. Then
\begin{enumerate}
    \item $\nu=\{\nu_0=\id,\nu_1=\mu\}$ is a left Zhang-twisting system of $\Z_2$-graded algebra $\Gamma$;
    \item ${^\nu \Gamma}\cong \left(\Gamma\ltimes_{\psi} ({_\mu\Gamma(1)})\right)_0$ as $\Z_2$-graded algebras.
\end{enumerate}
\end{lemma}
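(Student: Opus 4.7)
The plan is a direct verification in two short steps, both essentially formal once Lemma \ref{lem: definition of mu} is in hand, which supplies both $\mu^2=\id$ and the fact that $\mu$ is a $\Z_2$-graded algebra automorphism of $\Gamma$.

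For part (1), I would check the defining identity $\nu_l(\nu_h(x)y)=\nu_{h+l}(x)\nu_l(y)$ of a left Zhang-twisting system for $\nu=\{\id,\mu\}$, taking homogeneous $x\in\Gamma_g$ and $y\in\Gamma_h$, by splitting on $l\in\Z_2$. When $l=0$ both sides reduce tautologically to $\nu_h(x)y$. When $l=1$, multiplicativity of $\mu$ gives
$$\nu_1(\nu_h(x)y)=\mu(\nu_h(x))\mu(y),$$
and a further split on $h$ finishes the check: for $h=0$ both sides equal $\mu(x)\mu(y)$, and for $h=1$ both sides equal $x\mu(y)$ after invoking $\mu^2=\id$.

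For part (2), I would propose the obvious candidate map
\begin{align*}
F:\ {^\nu\Gamma}\ &\longrightarrow\ \left(\Gamma\ltimes_{\psi}({_\mu\Gamma(1)})\right)_{0},\\
a\in\Gamma_0\ &\longmapsto\ (a,0),\qquad b\in\Gamma_1\ \longmapsto\ (0,b).
\end{align*}
Under the $\Z_2\times\Z_2$-grading of Remark \ref{rem: semi-trivial extension}(3), the $0$-th component $(\Gamma\ltimes_{\psi}({_\mu\Gamma(1)}))_{0}$ decomposes as $\Gamma_0\oplus{_\mu\Gamma(1)}_0=\Gamma_0\oplus\Gamma_1$, and the surviving first $\Z_2$-factor puts $\Gamma_0$ in degree $0$ and $\Gamma_1$ in degree $1$, so $F$ is automatically a $\Z_2$-graded linear isomorphism. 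To see $F$ respects multiplication, I would compare the Zhang-twisted product $x\star y=\nu_{\deg y}(x)y$ of ${^\nu\Gamma}$ with the semi-trivial product
$$(a,b)\odot(a',b')=(aa'+\mu(b)b',\mu(a)b'+ba')$$
from Example \ref{ex: twisted E oplus E}(2), running through the four parity cases for $(\deg x,\deg y)\in\Z_2\times\Z_2$. Each collapses at once; the only case simultaneously using the left bimodule twist and $\mu^2=\id$ is $\deg x=\deg y=1$, where $F(\mu(x)y)=(\mu(x)y,0)$ matches $(0,x)\odot(0,y)$.

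No serious obstacle is anticipated. The one place requiring care is the bookkeeping of the two $\Z_2$-gradings on $\Gamma\ltimes_{\psi}({_\mu\Gamma(1)})$ together with the shift identifications ${_\mu\Gamma(1)}_0=\Gamma_1$ and ${_\mu\Gamma(1)}_1=\Gamma_0$; once set up, both parts reduce to short computations using only Lemma \ref{lem: definition of mu}.
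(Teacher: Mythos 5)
Your proposal is correct and carries out exactly the routine verification the paper compresses into the single word "Straightforwardly." Part (1) is a clean case split on $(h,l)\in\Z_2\times\Z_2$ using only that $\mu$ is multiplicative and $\mu^2=\id$ (from Lemma \ref{lem: definition of mu}); part (2) uses the explicit product $(a,b)\odot(a',b')=(aa'+\mu(b)b',\mu(a)b'+ba')$ from Example \ref{ex: twisted E oplus E}(2) and the four-way parity check all match the twisted product $x\ast y=\nu_{\deg y}(x)y$. One very minor imprecision: in the $\deg x=\deg y=1$ case the identity $\mu^2=\id$ does not actually enter the multiplication computation itself; it is used earlier, to ensure the semi-trivial extension $\Gamma\ltimes_\psi({_\mu\Gamma(1)})$ is well defined. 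Your bookkeeping of the two $\Z_2$-gradings and the shift ${_\mu\Gamma(1)}_0=\Gamma_1$ is exactly right and is the only point of real care.
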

\begin{proof}
    Straightforwardly.
\end{proof}

\begin{theorem}\label{thm: -1case equivalence ot semi-trivial extension of Gamma}
	Let $B=A_{\{-1,0\}}[y_1,y_2;\sigma]$ be a $\Z$-graded double Ore extension of a noetherian Koszul Artin-Schelter regular algebra $A$ and $z\in A_2$ be a nonzero regular central element of $A$. Suppose $B$ is noetherian and  $z+y_1^2+y_2^2$ is a central element of $B$. Then 
 \begin{enumerate}
     \item $C_{B^!}(z+y_1^2+y_2^2)\cong\Gamma\ltimes_{\psi} ({_\mu\Gamma(1)})$ as $\Z_2$-graded algebras;
     \item  there are equivalences of triangulated categories
	$$\umcm(B/(z+y_1^2+y_2^2))\cong D^b\left(\gr_{\Z_{2}}
 \Gamma\ltimes_{\psi} ({_\mu\Gamma(1)})\right)\cong D^b\left(\module 
 {^\nu \Gamma}\right),$$
where $\nu=\{\nu_0=\id,\nu_1=\mu\}$ is a left Zhang-twisting system of $\Z_2$-graded algebra $\Gamma$.
 \end{enumerate}

\end{theorem}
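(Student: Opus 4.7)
The plan is to prove (1) by constructing an explicit $\Z_2$-graded algebra isomorphism between $C_{B^!}(z+y_1^2+y_2^2)$ and the semi-trivial extension $\Gamma\ltimes_\psi({_\mu\Gamma(1)})$, and then to deduce (2) by combining (1) with Theorem \ref{thm: properties of clifford def} and the standard equivalence $\gr_{\Z_2}E\cong\module E_0$ available when $E$ is strongly $\Z_2$-graded.

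For (1), I first analyze the structure of $C_{B^!}(z+y_1^2+y_2^2)$. By Lemma \ref{lem: properties for B!}(4) it has $\{1,y_1^*,y_2^*,y_1^*y_2^*\}$ as a right $C_{A^!}(z)$-basis. With $P=\{-1,0\}$, the $R_J^\perp$ part of Lemma \ref{lem: properties for B!}(1) reduces in the Clifford deformation to $(y_1^*)^2=(y_2^*)^2=1$ and $y_1^*y_2^*=y_2^*y_1^*$, so $y_1^*$ and $y_2^*$ commute in $C_{B^!}(z+y_1^2+y_2^2)$ and $(y_1^*y_2^*)^2=1$. The $R_\tau$ relations together with the multiplicativity of $\sigma^!$ and the fact that each $\sigma^!_{ij}$ commutes with $\xi_{-1}$ then yield, by induction on $\Z$-degree of lifts, the commutation rule
$$a\,y_i^*=y_1^*\,\sigma^!_{1i}(\xi_{-1}(a))+y_2^*\,\sigma^!_{2i}(\xi_{-1}(a))$$
for all homogeneous $a\in C_{A^!}(z)$ and $i=1,2$.

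Exploiting that $\varepsilon_1$ is the identity of $\Gamma$ and that $\mu$ fixes the scalars $\varepsilon_1,\varepsilon_2$ (since $\sigma^!_{11}(1)=1$ and $\sigma^!_{21}(1)=0$), I define a $\Z_2$-graded $\kk$-linear bijection $\Phi\colon\Gamma\ltimes_\psi({_\mu\Gamma(1)})\to C_{B^!}(z+y_1^2+y_2^2)$ sending $\varepsilon_1 a\mapsto a$, $\varepsilon_2 a\mapsto y_1^*y_2^*a$ on the $\Gamma$-summand and $\varepsilon_1 a\mapsto y_1^*a$, $\varepsilon_2 a\mapsto y_2^*a$ on the ${_\mu\Gamma(1)}$-summand, for $a\in C_{A^!}(z)$. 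Verifying that $\Phi$ is an algebra homomorphism reduces to three families of checks: products of two $\Gamma$-elements are handled by \eqref{eq: mul of twisted C_{A^!}(z) oplus C_{A^!}(z)} combined with Corollary \ref{cor: properties for dual of sigma case -1} applied to move $a\in C_{A^!}(z)$ past $y_1^*y_2^*$; products mixing $\Gamma$ and ${_\mu\Gamma(1)}$ correspond exactly to the commutation rule above paired with the formula for $\mu$ from Lemma \ref{lem: definition of mu}; and products of two ${_\mu\Gamma(1)}$-elements collapse, via $\psi(\varepsilon_i\otimes\varepsilon_j)=\mu(\varepsilon_i)\star\varepsilon_j=\varepsilon_i\star\varepsilon_j$, to products in $\Gamma$ that match $y_i^*y_j^*$ by commutativity of the $y_i^*$. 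The main obstacle here is the simultaneous bookkeeping of the $\mu$-twist, the sign twist $\xi_{-1}$, and the explicit formulas for $\theta_{12},\theta_{22}$; fortunately, the definitions of $\theta$ and $\mu$ in Section 5 are calibrated against the identities in Corollary \ref{cor: properties for dual of sigma case -1} so that after simplification both sides of every check agree.

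For (2), the first equivalence in the statement follows from Theorem \ref{thm: properties of clifford def}(3) applied to $B/(z+y_1^2+y_2^2)$ composed with the isomorphism from (1). Since $C_{B^!}(z+y_1^2+y_2^2)$ is strongly $\Z_2$-graded by Theorem \ref{thm: properties of clifford def}(1), transport along $\Phi$ shows that $\Gamma\ltimes_\psi({_\mu\Gamma(1)})$ is also strongly $\Z_2$-graded, and the standard equivalence $\gr_{\Z_2}E\cong\module E_0$ for strongly graded $E$ yields $D^b(\gr_{\Z_2}\Gamma\ltimes_\psi({_\mu\Gamma(1)}))\cong D^b(\module(\Gamma\ltimes_\psi({_\mu\Gamma(1)}))_0)$. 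Finally, Lemma \ref{lem: -1case: semi-trivial extension is Zhang-twist}(2) identifies this degree-zero subalgebra with the Zhang-twisted algebra ${^\nu\Gamma}$, completing the chain of triangulated equivalences.
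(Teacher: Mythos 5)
Your proof is correct and follows essentially the same path as the paper's: the paper defines the isomorphism in the direction from $C_{B^!}(z+y_1^2+y_2^2)$ to $\Gamma\ltimes_{\psi}({_\mu\Gamma(1)})$ via $y_1^*\mapsto(0,\varepsilon_1)$, $y_2^*\mapsto(0,\varepsilon_2)$, $v^*\mapsto(\varepsilon_1v^*,0)$, which is exactly the inverse of your $\Phi$, and part (2) is deduced from Theorem~\ref{thm: properties of clifford def} together with Lemma~\ref{lem: -1case: semi-trivial extension is Zhang-twist} precisely as you describe. The commutation rule you record and the observation that $\mu$ fixes $\varepsilon_1,\varepsilon_2$ are just the details the paper compresses into the phrase ``comparing with the multiplication list.''
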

\begin{proof}
(1) By Lemma \ref{lem: rep for Clifford deformation of double ore extensions}(1), $C_{B^!}(z+y_1^2+y_2^2)$ is generated by $V^*,y^*_1,y^*_2$ subject to the relations consisting of the relations of $C_{A^!}(z)$ and
\begin{align*}
	&(y_1^*)^2-1,(y_2^*)^2-1,y_1^*y_2^*-y_2^*y_1^*,\\ &v^*y_i^*+y_1^*\sigma_{1i}^*(v^*)+y_2^*\sigma_{2i}^*(v^*), \qquad \forall v^*\in V^*,i=1,2.
\end{align*}
Comparing with the multiplication list of $\Gamma\ltimes_{\psi} ({_\mu\Gamma(1)})$ (see \eqref{eq: mul of twisted C_{A^!}(z) oplus C_{A^!}(z)}), one obtains that 
	the $\Z_2$-graded algebra homomorphism from $C_{B^!}(z+y_1^2+y_2^2)$ to $\Gamma\ltimes_{\psi} ({_\mu\Gamma(1)})$ mapping
	$$y^*_1\mapsto (0,\varepsilon_1),\qquad y^*_2\mapsto (0,\varepsilon_2),\qquad v^*\mapsto (\varepsilon_1v^*,0),\ \forall v^*\in V^*,$$
	is an isomorphism. So $\Gamma\ltimes_{\psi} ({_\mu\Gamma(1)})$ is strongly $\Z_2$-graded by Theorem \ref{thm: properties of clifford def}(1).

(2) follows by Theorem \ref{thm: properties of clifford def} and Lemma \ref{lem: -1case: semi-trivial extension is Zhang-twist}.
\end{proof}

\begin{example}\label{ex: -1case}
Let $A$ be a noetherian Koszul Artin-Schelter regular algebra. Let $\sigma_1$ and $\sigma_2$ be graded automorphisms of $A$ satisfying
$$
\sigma_1^2=\sigma_2^2=\id_A,\qquad\sigma_{1}\sigma_2=\sigma_2\sigma_1.
$$
We extend $\sigma_2$ to be a graded automorphism $\widetilde{\sigma_2}$ of $A[y_1;\sigma_1]$ such that ${\widetilde{\sigma_2}}_{\mid A}=\sigma_2$ and  $\widetilde{\sigma_2}(y_1)=-y_1$. Then we have an Ore extension $A[y_1;\sigma_1][y_2;\widetilde{\sigma_2}]$ is isomorphic to the double Ore extension  $B=A_{\{-1,0\}}[y_1,y_2;\sigma]$, where
$$
\sigma=
\begin{pmatrix}
	\sigma_1 & 0\\
	0 &\sigma_2
\end{pmatrix}
:A\to M_2(A).
$$

Let $z\in A_2$ be a nonzero regular central element of $A$ such that $\sigma_1(z)=\sigma_2(z)=z$. By Lemma \ref{lem: -1case condition for central element}, $z+y_1^2+y_2^2$ is a central element of $B$. In this case, the linear map $\theta$ in the twisting system  $\theta_\times$ is
\begin{equation*}
	\theta=\begin{pmatrix}
		\id & 0 \\
		0& \sigma^!_{2}\sigma^!_{1}
	\end{pmatrix}: C_{A^!}(z) \to M_2(C_{A^!}(z)).
\end{equation*}
Suppose $\sigma_1=\sigma_2=\sigma$, $\theta$ is just  the diagonal map. So $\Gamma={^{\theta_\times} (C_{A^!}(z)\times C_{A^!}(z))}=C_{A^!}(z)\times C_{A^!}(z)$, and one obtains that $\bar{\nu}=\{\bar{\nu}_0=\id,\bar{\nu}_1=\sigma^{!}\xi_{-1}\}$ is a left Zhang-twisting system of $\Z_2$-graded algebra $C_{A^!}(z)$ such that 
 $$
 {^\nu \Gamma}\cong {^{\bar{\nu}} C_{A^!}(z)}\times {^{\bar{\nu}} C_{A^!}(z)},
 $$ 
as $\Z_2$-graded algebras, where $\nu=\{\nu_0=\id,\nu_1=\mu\}$ is a left Zhang-twisting system of $\Z_2$-graded algebra $\Gamma$. By Theorem \ref{thm: -1case equivalence ot semi-trivial extension of Gamma}, there is an   equivalence of triangulated categories
 	$$\umcm(B/(z+y_1^2+y_2^2)) \cong D^b\left(\module
  {^\nu C_{A^!}(z)}\right)^{\times 2}.$$
In particular, if $\sigma=\xi_{-1}$, ${^{\bar{\nu}} C_{A^!}(z)}= C_{A^!}(z)$ and so $\umcm(B/(z+y_1^2+y_2^2))\cong D^b\left(\module
 {C_{A^!}(z)}\right)^{\times 2}.$ It is exactly \cite[Theorem 1.4]{MU2} (also see Example \ref{ex: +1case two setp Ore extensions}).
\end{example}

\begin{example}
Let $A=k_{-1}[x_1,x_2]$ and the central element $z=x_1^2+x_2^2$. Let $B=A_{\{-1,0\}}[y_1,y_2;\sigma]$ be a $\Z$-graded double Ore extension of $A$ where $\sigma=(\sigma_{ij}):A\to M_2(A)$ satisfies
$$
\begin{array}{llll}
    \sigma_{11}(x_1)=-hx_1+hx_2, &  \sigma_{12}(x_1)=hx_1+hx_2, & \sigma_{21}(x_1)=hx_1+hx_2,& \sigma_{22}(x_1)=hx_1-hx_2, \\
        \sigma_{11}(x_2)=hx_1-hx_2, &  \sigma_{12}(x_2)=hx_1+hx_2, &  
    \sigma_{21}(x_2)=hx_1+hx_2,& \sigma_{22}(x_2)=-hx_1+hx_2,
\end{array}
$$
where $4h^2=1$. It is a special case of $\mathbb{T}$ in \cite{ZZ2}. One obtains that  $B$ is noetherian by \cite[Proposition 5.16]{ZZ2} and $\sigma$ satisfies Lemma \ref{lem: -1case condition for central element}. So $x_1^2+x_2^2+y_1^2+y_2^2$ is a central element of $B$. In this case, the linear map $\theta$ in the twisting system  $\theta_\times$ is
\begin{equation*}
\theta=(\theta_{ij})=\begin{pmatrix}
	\id & 0\\
	0& \theta_{22}
\end{pmatrix}: C_{A^!}(z) \to M_2(C_{A^!}(z)),
\end{equation*}
where $\theta_{22}(x_1^*)=x_2^*$, $\theta_{22}(x_2^*)=x_1^*$ and $\theta_{22}(x_1^*x_2^*)=x_1^*x_2^*$. Let $\Gamma={^{\theta_\times} (C_{A^!}(z)\times C_{A^!}(z))}$ be the twisted direct product, $\mu$ be the $\Z_2$-graded algebra isomorphism defined in Lemma \ref{lem: definition of mu} and ${^\nu \Gamma}$ be a Zhang-twisted algebra where $\nu=\{\nu_0=\id,\nu_1=\mu\}$ is a left Zhang-twisting system of $\Z_2$-graded algebra $\Gamma$.

Write $T_1,T_2,T_3,T_4$ and $T_5$ for right ${^\nu\Gamma}$-modules generated by $\{(1-x_1^*x_2^*,0),(x_1^*-x_2^*,0)\}$,  $\{\sqrt{-2h}(1+x_1^*x_2^*,0)+(x_1^*+x_2^*,0)\}$, $\{\sqrt{-2h}(1+x_1^*x_2^*,0)-(x_1^*+x_2^*,0)\}$, $\{\sqrt{2h}(0,1+x_1^*x_2^*)+(0,x_1^*+x_2^*)\}$ and $\{\sqrt{2h}(0,1+x_1^*x_2^*)-(0,x_1^*+x_2^*)\}$ respectively. It is routine to obtain that $T_1,T_2,T_3,T_4,T_5$ are all simple and non-isomorphic to each other, and
$
{^\nu\Gamma}\cong T_1^{\oplus 2}\oplus T_2\oplus T_3\oplus T_4\oplus T_5$  as right ${^\nu\Gamma}$-modules. So ${^\nu\Gamma}$ is semisimple and 
$$
{^\nu\Gamma}\cong M_2(\kk)\times \kk^{\times 4}.
$$
By Theorem \ref{thm: -1case equivalence ot semi-trivial extension of Gamma}, there is an   equivalence of triangulated categories
 	$$\umcm(B/(z+y_1^2+y_2^2)) \cong D^b(\kk)^{\times 5}.$$

\end{example}

Finally, we use a special case of the class $\mathbb{R}$ constructed in \cite{ZZ2} to show that noncommutative quadric hypersurfaces obtained by double Ore extension does not preserve the property of noncommutative graded isolated singularity. This behavior does not occur in the usual (second) double branched covers and tensor products of noncommutative quadric hypersurfaces.

\begin{proposition}\label{prop: double ore extension not preserve isolated singulariy}
Let $A=k_{-1}[x_1,x_2]$ and $B=A_{\{-1,0\}}[y_1,y_2;\sigma]$ be a $\Z$-graded double Ore extension of $A$ where $\sigma=(\sigma_{ij}):A\to M_2(A)$ satisfies
$$
\begin{array}{llll}
    \sigma_{11}(x_1)=x_1+x_2, &  \sigma_{12}(x_1)=x_1, & \sigma_{21}(x_1)=x_2,& \sigma_{22}(x_1)=0, \\
        \sigma_{11}(x_2)=0, &  \sigma_{12}(x_2)=x_1, &  
    \sigma_{21}(x_2)=-x_2,& \sigma_{22}(x_2)=-x_1+x_2.
\end{array}
$$
Then $B/(x_1^2+x_2^2+y_1^2+y_2^2)$ is not a noncommutative graded isolated singularity.
\end{proposition}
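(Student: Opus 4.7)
The plan is to invoke Theorem \ref{thm: properties of clifford def}(2): it suffices to show that $C_{B^!}(x_1^2+x_2^2+y_1^2+y_2^2)$ is not $\Z_2$-graded semisimple. First I would check directly from the explicit formulas that $\sigma$ satisfies Lemma \ref{lem: -1case condition for central element}, so $z+y_1^2+y_2^2$ with $z=x_1^2+x_2^2$ is a regular central element of $B$; noetherianity of $B$ follows from \cite[Proposition 5.16]{ZZ2}. Since $C_{B^!}(z+y_1^2+y_2^2)$ is strongly $\Z_2$-graded by Theorem \ref{thm: properties of clifford def}(1), being $\Z_2$-graded semisimple is equivalent to the ungraded semisimplicity of its degree-$0$ part. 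By Theorem \ref{thm: -1case equivalence ot semi-trivial extension of Gamma}(1) together with Lemma \ref{lem: -1case: semi-trivial extension is Zhang-twist}(2), this degree-$0$ component is isomorphic to the $8$-dimensional Zhang-twisted algebra ${^\nu\Gamma}$, so the task reduces to showing ${^\nu\Gamma}$ is not semisimple.

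Next I would assemble ${^\nu\Gamma}$ explicitly. Compute $C_{A^!}(z)\cong \kk\langle x_1^*,x_2^*\rangle/((x_i^*)^2-1,\,x_1^*x_2^*-x_2^*x_1^*)$, a $4$-dimensional commutative algebra isomorphic to the group algebra $\kk[\Z_2\times\Z_2]$. Dualize the restriction of $\sigma$ to $V$ to obtain $\sigma_{ij}^*$ on $V^*$, and lift to an algebra homomorphism $\sigma^!\colon C_{A^!}(z)\to M_2(C_{A^!}(z))$ via \eqref{eq: dual map of simga from clifford deformation to its matrix}. From $\sigma^!$, formula \eqref{eq: def of theta of Eoplus E} produces $\theta=(\theta_{ij})$, hence the twisted direct product $\Gamma={^{\theta_\times}}(C_{A^!}(z)\times C_{A^!}(z))$ (Corollary \ref{cor: twisted of E oplus E}), together with the involution $\mu$ provided by Lemma \ref{lem: definition of mu}. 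The Zhang twist ${^\nu\Gamma}$ with $\nu=\{\id,\mu\}$ then has an explicit multiplication on its $8$-dimensional underlying space, which one analyses for a nontrivial Jacobson radical.

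Equivalently, and in my view more transparent, I would work directly inside $C_{B^!}(z+y_1^2+y_2^2)$ via the presentation of Lemma \ref{lem: rep for Clifford deformation of double ore extensions}(1). A short calculation shows that, besides the Clifford-type relations $(x_i^*)^2=(y_j^*)^2=1$, $x_1^*x_2^*=x_2^*x_1^*$, $y_1^*y_2^*=y_2^*y_1^*$ and the anticommutations $x_1^*y_1^*+y_1^*x_1^*=0$ and $x_2^*y_2^*+y_2^*x_2^*=0$, the cross-relations $R_\tau$ yield two further \emph{asymmetric} identities expressing $x_2^*y_1^*$ and $x_1^*y_2^*$ in terms of the monomials $y_i^*x_j^*$. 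The goal is then to exhibit a nonzero element $w$ in the degree-$0$ part of $C_{B^!}(z+y_1^2+y_2^2)$ generating a proper nilpotent two-sided ideal. A natural starting candidate uses the square-zero odd-degree element $n=x_1^*-\sqrt{-1}\,y_1^*$ (which satisfies $n^2=0$ by the anticommutation of $x_1^*$ and $y_1^*$); suitable products of $n$ with carefully chosen odd-degree elements land in the degree-$0$ part, and one checks that one such product generates a nonzero element of the Jacobson radical.

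The main obstacle is precisely this last identification: because the $R_\tau$ cross-relations are asymmetric, $C_{B^!}(z+y_1^2+y_2^2)$ is not a standard Clifford algebra, so pinpointing the correct nilpotent element and verifying that the two-sided ideal it generates is proper and nilpotent requires a careful, though finite, calculation inside the $16$-dimensional algebra (equivalently, inside the $8$-dimensional ${^\nu\Gamma}$). Once any single nonzero element of the Jacobson radical is produced, Theorem \ref{thm: properties of clifford def}(2) delivers the conclusion.
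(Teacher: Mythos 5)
Your strategy matches the paper's exactly: reduce via Theorem~\ref{thm: properties of clifford def}(2), Theorem~\ref{thm: -1case equivalence ot semi-trivial extension of Gamma}(1) and Lemma~\ref{lem: -1case: semi-trivial extension is Zhang-twist}(2) to showing that the $8$-dimensional Zhang-twist ${^\nu\Gamma}$ (equivalently, the degree-$0$ part of $C_{B^!}(z+y_1^2+y_2^2)$) is not semisimple, then exhibit a nonzero element of its Jacobson radical. The paper does precisely this, writing out the multiplication of ${^\nu\Gamma}$ and showing the right ideal generated by $(1-x_1^*x_2^*,0)$ is nilpotent. The minor citation discrepancy (you cite \cite[Proposition~5.16]{ZZ2} for noetherianity of $B$; the paper uses \cite[Proposition~0.5]{ZZ1}) is harmless.

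However, as a proof the proposal has a genuine gap at the decisive step. You stop at ``one checks that one such product generates a nonzero element of the Jacobson radical''; this is exactly the content of the proposition and is not routine to the point of being omissible. More importantly, your proposed starting point is flawed in itself: the observation that $n=x_1^*-\sqrt{-1}\,y_1^*$ satisfies $n^2=0$ does \emph{not} by itself indicate a nontrivial radical. Square-zero elements exist in semisimple algebras (e.g.\ the matrix unit $E_{12}\in M_2(\kk)$ has square zero yet generates the whole algebra), so nilpotency of a single element is no evidence that the two-sided ideal it generates is proper, let alone nilpotent. Moreover $n$ is odd, so it does not lie in the degree-$0$ component whose semisimplicity is at stake; one needs a specific \emph{even} element and must then verify closure of the ideal it generates together with nilpotency --- which is the finite but nontrivial computation the paper actually carries out. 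Until that verification is supplied, the proposal is a correct plan rather than a proof.
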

\begin{proof}
By  \cite[Proposition 0.5]{ZZ1}, $B$ is noetherian. It is straightforward to check that $\sigma$ satisfies Lemma \ref{lem: -1case condition for central element}, and so $x_1^2+x_2^2+y_1^2+y_2^2$ is a central element of $B$. So the noncommutative quadric hypersurface $B/(x_1^2+x_2^2+y_1^2+y_2^2)$ is well defined. Write $z=x_1^2+x_2^2$.

Note that $C_{A^!}(z)$ is generated by $x_1^*,x_2^*$ subject to relations
$
x_1^*x_2^*-x_2^*x_1^*, (x_1^*)^2-1, (x_2^*)^2-1.
$
Write $\varepsilon=\{\varepsilon_1=(1,1), \varepsilon_2=(1,-1)\}$. The linear map $\theta$ in the twisting system $\theta_\times=(\theta,\varepsilon)$ of $C_{A^!}(z)\times C_{A^!}(z)$ defined in \eqref{eq: def of theta of Eoplus E} is
\begin{equation*}
\theta=(\theta_{ij})=\begin{pmatrix}
	\id & \theta_{12}\\
	0& \theta_{22}
\end{pmatrix}: C_{A^!}(z) \to M_2(C_{A^!}(z)),
\end{equation*}
where 
$$
\begin{array}{lll}
\theta_{12}(x_1^*)=x_1^*+x_2^*, &\theta_{12}(x_2^*)=x_1^*-x_2^*,&\theta_{12}(x_1^*x_2^*)=2,\\
\theta_{22}(x_1^*)=-x_2^*, &\theta_{12}(x_2^*)=x_1^*,&\theta_{22}(x_1^*x_2^*)=-x_1^*x_2^*.
\end{array}
$$
Let $\Gamma={^{\theta_\times} (C_{A^!}(z)\times C_{A^!}(z))}$ be the twisted direct product,  $\mu$ be the $\Z_2$-graded algebra isomorphism defined in Lemma \ref{lem: definition of mu} and  ${^\nu \Gamma}$ be a Zhang-twisted algebra of $\Gamma$ where $\nu=\{\nu_0=\id,\nu_1=\mu\}$ is a left Zhang-twisting system of $\Z_2$-graded algebra $\Gamma$. We list partial multiplication of ${^\nu \Gamma}$:
$$  
\begin{array}{llll}
(1,0)\ast (a,b)=(a,0),&(x_1^*,0)\ast (0,b)=(0,0), \\
(x_1^*x_2^*,0)\ast (1,0)=(1,0), &
(x_1^*x_2^*,0)\ast (x_1^*x_2*,0)=(x_1^*x_2*,0),\\
(x_1^*x_2^*,0)\ast (x_1^*,0)=(x_1^*,0),&
(x_1^*x_2^*,0)\ast (x_2^*,0)=(x_2^*,0),\\
(x_1^*x_2^*,0)\ast (0,1)=(x_1^*x_2^*-1,0), &
(x_1^*x_2^*,0)\ast (0,x_1^*x_2*)=(1-x_1^*x_2*,0),\\
(x_1^*x_2^*,0)\ast (0,x_1^*)=(x_1^*-x_2^*,0),&
(x_1^*x_2^*,0)\ast (0,x_2^*)=(x_2^*-x_1^*,0),\\
(x_1^*,0)\ast (1,0)=(x_1^*,0),&
(x_1^*,0)\ast (x_1^*x_2^*,0)=(x_2^*,0),\\
(x_1^*,0)\ast (x_1^*,0)=(x_1^*,0), &
(x_1^*,0)\ast (x_2^*,0)=(x_2^*,0), \\
(x_2^*,0)\ast (1,0)=(x^*_1,0),&
(x_2^*,0)\ast (x_1^*x_2^*,0)=(x^*_2,0),\\
(x_2^*,0)\ast (x_1^*,0)=(-1,0), &
(x_2^*,0)\ast (x_2^*,0)=(-x_1^*x_2^*,0), \\
(x_2^*,0)\ast (0,1)=(x^*_2-x_1^*,0),&
(x_2^*,0)\ast (x_1^*x_2^*)=(x^*_1-x^*_2,0),\\
(x_2^*,0)\ast (0,x_1^*)=(x^*_1x^*_2-1,0), &
(x_2^*,0)\ast (0,x_2^*)=(1-x^*_1x^*_2,0), 
\end{array}
$$
for any $a,b\in C_{A^!}(z)$.

One obtains that the right ${^\nu\Gamma}$-module generated by $(1-x_1^*x_2^*,0)$ is nilpotent. So ${^\nu\Gamma}$ is not semisimple and $B/(x_1^2+x_2^2+y_1^2+y_2^2)$ is not a noncommutative graded isolated singularity by Lemma \ref{lem: -1case: semi-trivial extension is Zhang-twist}, Theorem \ref{thm: -1case equivalence ot semi-trivial extension of Gamma} and Theorem \ref{thm: properties of clifford def}.    
\end{proof}
\vskip7mm

\noindent {\bf Acknowledgments.} Yuan Shen is supported by Zhejiang Provincial Natural Science Foundation of China under Grant No. LY24A010006 and National Natural Science Foundation of China under Grant Nos. 11701515 and 12371101. Xin Wang is supported by Doctoral Research Fund of Shandong Jianzhu University under Grant No. XNBS1943.

\end{document}